\documentclass[a4paper,11pt,english,reqno]{amsart}

% Language setting
% Replace `english' with e.g. `spanish' to change the document language
\usepackage[english]{babel}
\usepackage{amssymb}
\usepackage{color}
\usepackage{appendix}
\usepackage{bm, upgreek}
\usepackage{soul}
\usepackage{hyperref}
\usepackage{mathtools}
\usepackage{dsfont}
% \usepackage[backend = biber,style = numeric,
% sorting=nyt
% ]{biblatex}
% \addbibresource{mybib.bib}
\numberwithin{equation}{section}

% Set page size and margins
% Replace `letterpaper' with `a4paper' for UK/EU standard size
\usepackage[letterpaper,top=2cm,bottom=2cm,left=2cm,right=2cm,marginparwidth=1.75cm]{geometry}
\renewcommand{\leq}{\leqslant}
\renewcommand{\geq}{\geqslant}
\newcommand{\cbar}{\overline{C}}
\newcommand{\cstar}{\overline{C}_{*}}
\newcommand{\N}{\mathbb{N}}
\newcommand{\R}{\mathbb{R}}
\newcommand{\I}{\mathcal{I}}
\newcommand{\T}{\mathcal{T}}
\newcommand{\W}{\mathcal{W}}
\newcommand{\E}{\mathbb{E}}
\newcommand{\UT}{\textup{UT}(\delta)}
\newcommand{\UTH}{\textup{UT}_{\textup{H}}(\delta)}
\newcommand{\uH}{\textup{H}}
\renewcommand{\P}{\mathbb{P}}
\newcommand{\G}{\mathbb{G}}
\newcommand{\glow}{\textup{G}_{\textup{low}}}
\newcommand{\ghigh}{\textup{G}_{\textup{High}}}
\newcommand{\tu}[1]{\textup{#1}}

\DeclareMathOperator{\dBer}{Ber}
\newcommand{\set}[1]{[\![#1]\!]}
\def\mathbi#1{\textbf{\em #1}}
\usepackage{amsthm}
\newtheorem{theorem}{Theorem}[section]
\newtheorem{proposition}[theorem]{Proposition}
\newtheorem{definition}[theorem]{Definition}
\newtheorem{remark}[theorem]{Remark}
\newtheorem{conjecture}[theorem]{Conjecture}
\newtheorem{lemma}[theorem]{Lemma}

% Useful packages
\usepackage{amsmath}
\usepackage{graphicx}

\definecolor{purple}{rgb}{0.9,0,0.8}

\definecolor{gray}{rgb}{0.7,0.7,0.7}

\begin{document} 
	\title{Upper tail bounds for irregular graphs}

	\author{Anirban Basak}
	\address{Anirban Basak, International Centre for Theoretical Sciences, Tata Institute of Fundamental Research, Bangalore, India}
	\email{anirban.basak@icts.res.in}
	\author{Shaibal Karmakar}
	\address{Shaibal Karmakar, International Centre for Theoretical Sciences, Tata Institute of Fundamental Research, Bangalore, India}
	\email{shaibal.karmakar@icts.res.in}

	\maketitle
	\begin{abstract}
		We consider the upper tail large deviations of subgraph counts for irregular graphs $\uH$ in $\mathbb{G}(n,p)$, the sparse Erd\H{o}s-R\'enyi graph on $n$ vertices with edge connectivity probability $p \in (0,1)$. For $n^{-1/\Delta} \ll p \ll 1$, where $\Delta$ is the maximum degree of $\uH$, we derive the upper tail large deviations for any irregular graph $\uH$. On the other hand, we show that for $p$ such that $1 \ll n^{v_\uH} p^{e_\uH} \ll (\log n)^{\alpha^{*}_{\uH}/\left(\alpha^{*}_{\uH}-1\right)}$, where $v_\uH$ and $e_\uH$ denote the number of vertices and edges of $\uH$, and $\alpha^*_\uH$ denotes the fractional independence number, the upper tail large deviations of the number of unlabelled copies of $\uH$ in $\G(n,p)$ is given by that of a sequence of Poisson random variables with diverging mean, for any strictly balanced graph $\uH$. Restricting to the $r$-armed star graph we further prove a localized behavior in the intermediate range of $p$ (left open by the above two results) and show that the mean-field approximation is asymptotically tight for the logarithm of the upper tail probability. This work further identifies the typical structures of $\mathbb{G}(n,p)$ conditioned on upper tail rare events in the localized regime.
	\end{abstract}

	\section{Introduction and main results}
	The classical large deviation theory traditionally deals with large deviations of linear functions of independent and identically distributed (i.i.d.)~random variables. Probably the simplest non-trivial problem that falls outside realm of the classical theory is the large deviations of triangle counts in an Erd\H{o}s-R\'enyi random graph, to be denoted by $\G(n,p)$, with vertex set $\set{n}:=\{1,2,\ldots, n\}$ and edge connectivity probability $p=p_n \in (0,1)$, where each pair of vertices is connected with a probability $p$ independently of every other pair.  In the last fifteen years there have been immense interest in studying large deviations of subgraph (in particular triangle) counts in Erd\H{o}s-R\'enyi random graphs, both in the dense regime, i.e.~$p \asymp 1$ (see the monograph \cite{chatterjeemono} and the references therein), and in the sparse regime, i.e.~$p \ll 1$, e.g.~see \cite{augeri, BGLZ, chatterjeedembo, cookdembo, CDP, eldan} (we refer the reader to Section \ref{sec:notation} for all the notation).

	Recently Harel, Mousset, and Samotij \cite{HMS} showed that the speed and the rate function of upper tail of cliques (i.e.~complete subgraphs) on $r$ vertices in $\G(n,p)$ undergoes a transition at the threshold $p^{({r-1})/{2}} \asymp n^{-1} (\log n)^{{1}/({r-2})}$. For $p^{({r-1})/{2}} \ll n^{-1} (\log n)^{{1}/({r-2})}$ (and $p^{(r-1)/2} \gg n^{-1}$ so that the expected number of cliques on $r$ vertices diverges) the speed and the rate function are given by those for a sequence of Poisson random variables with diverging means, and thus that regime of $p$ is rightly termed as the {\em Poisson regime}. In contrast, for $  n^{-1} (\log n)^{{1}/({r-2})} \ll p^{({r-1})/{2}}  \ll 1$ the large deviation event is primarily due to the presence of some localized structures (of microscopic size) in the graph, and therefore that regime is termed as the {\em localized regime}.

	It was conjectured in (an earlier version of) the work \cite{HMS} that the transition between the localized and the Poisson regimes, for the upper tail of {\em all} connected regular subgraph counts, should occur at the threshold $p^{\Delta/2} \asymp n^{-1} (\log n)^{1/(v_\uH-2)}$, where $\Delta$ is the common degree of the regular graph $\uH$ in context and $v_\uH$ denotes the number of vertices in $\uH$. In \cite{HMS} the Poisson behavior was proved in the predicted Poisson regime, and the localized behavior for $p$ such that $ n^{-1/2-o(1)} \le p^{\Delta/2} \ll 1$ (and under the additional assumption that $\uH$ is non bipartite in the extended range $n^{-1} (\log n)^{\Delta v_\uH^2} \ll p \ll 1$).   
	The more recent work \cite{ABRB} extends the localized behavior for {\em all} regular graph in the sub regime left open by \cite{HMS}. Therefore, \cite{HMS} and \cite{ABRB} together settle the problem of upper tail large deviations of regular subgraph counts in sparse Erd\H{o}s-R\'enyi graphs (albeit a couple of boundary cases). Let us also mention in passing that the speed and the rate function in the localized regime turns out to be the solution of an appropriate mean field variational problem. See Section \ref{sec:mean-field} below for a further discussion on this.

	In this short article we investigate whether for the upper tail of irregular subgraph (i.e.~there exists at least two vertices with unequal degrees) counts in sparse Erd\H{o}s-R\'enyi graphs a transition threshold between the localized and the Poisson regimes exists, and whether in the localized regime the large deviation probability can be expressed as the solution of some mean field variational problem (similar to the one mentioned in the regular setting). %for the upper tail of irregular subgraph (i.e.~there exists at least two vertices with unequal degrees) counts in sparse Erd\H{o}s-R\'enyi graphs. %As we will see, unlike in the regular setting, 

	%Before moving to stating the results let us mention that  

	\subsection{Localized regime I} We split the localized regime in two sub regimes. The rationale is that in the first sub regime the optimizer for the associated variational problem is a `planted' one, namely the `planted hub' in the irregular case. In contrast, in the second sub regime the optimizer is predicted to be  `non-planted', as evidenced through Theorem \ref{thm:mean-field}.  For a graph $\uH$ we write $e_\uH$ to denote the number of its edges. We use the notation $N(\uH, \tu{G})$ to denote the number of {\em labelled} copies of $\uH$ in $\tu{G}$.   Throughout the paper we will use the shorthand\\
	\begin{equation*}
		\UTH\coloneqq\{N(\uH, \G(n,p))\geq (1+\delta)n^{v_{\uH}}p^{e_{\uH}}\}.
	\end{equation*}
	%As will be explained below the 
	\begin{theorem}\label{thm:loc-1}
		Let $\Delta\geq 2$ be an integer, and ${\uH}$ be a connected, irregular graph with maximum degree $\Delta$. For every fixed $\delta>0$ and all $p\in(0,1)$ satisfying $n^{-1/\Delta}\ll p\ll1$,
		\begin{equation*}
			\lim_{n\rightarrow \infty} \frac{-\log\P\left(\UTH\right)}{n^{2}p^{\Delta}\log(1/p)} = \theta_{{\uH}^*},
		\end{equation*}
		where $\theta_{{\uH}^{*}} =\theta_{\uH^*}(\delta)$ is the unique positive solution to the equation $P_{{\uH}^{*}}(\theta)=1+\delta$ and $P_{{\uH}^{*}}$ is the independence polynomial (see \cite[Definition 1.1]{BGLZ}) of the graph ${\uH}^{*}$ with ${\uH}^{*}$ being the induced subgraph of ${\uH}$ on all vertices whose degree in $\uH$ is $\Delta$.
	\end{theorem}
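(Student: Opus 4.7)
The plan is to establish matching upper and lower bounds on $-\log \P(N(\uH, \G(n,p)) \geq (1+\delta) n^{v_{\uH}} p^{e_{\uH}})$, both of order $\theta_{\uH^*} n^2 p^\Delta \log(1/p)$, via a BGLZ-style planting-plus-variational argument adapted to the irregular setting. Throughout, write $V^* := V(\uH^*)$.

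For the lower bound on the probability (upper bound on $-\log \P$), I would construct a deterministic planted edge set $\mathbf{A} \subset \binom{\set{n}}{2}$, modelled on the optimizer of $P_{\uH^*}(\theta) = 1+\delta$, whose insertion into $\G(n,p)$ boosts $N(\uH, \G(n,p))$ past the required threshold with constant conditional probability. The construction is a ``blown-up copy'' of $\uH^*$: partition a small subset of $\set{n}$ into blocks $B_1, \ldots, B_{v_{\uH^*}}$ indexed by the vertices of $\uH^*$, with block sizes dictated by the optimizer (each of order $n p^{\Delta/2}$ up to constants), and plant the $\uH^*$-edge pattern between blocks. This uses $|\mathbf{A}| = \theta_{\uH^*} n^2 p^\Delta (1+o(1))$ planted edges, giving planting probability $p^{|\mathbf{A}|} = \exp(-\theta_{\uH^*} n^2 p^\Delta \log(1/p)(1+o(1)))$. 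Conditional on $\G(n,p) \supseteq \mathbf{A}$, a second-moment (Chebyshev/Janson) argument shows that $N(\uH, \G(n,p))$ exceeds the threshold with probability bounded away from zero: each blown-up $\uH^*$-copy extends via the background $\G(n,p)$ to an order $n^{v_{\uH}-v_{\uH^*}} p^{e_{\uH}-e_{\uH^*}}$ many $\uH$-copies, and such extension counts concentrate in this regime.

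For the upper bound on the probability, I would use the two-stage BGLZ approach. The first stage is an extension-concentration lemma: for any fixed embedding of $\uH^*$ into $\set{n}$, the random number of extensions to a full $\uH$-copy in $\G(n,p)$ concentrates sharply around its mean of order $n^{v_{\uH}-v_{\uH^*}} p^{e_{\uH}-e_{\uH^*}}$, because every vertex of $V(\uH) \setminus V^*$ has degree strictly less than $\Delta$ in $\uH$, so the underlying Bernoulli polynomials have expectations $\gtrsim n p^{\Delta-1} \gg 1$ in the regime $p \gg n^{-1/\Delta}$. Via this reduction, the upper-tail event for $N(\uH, \G(n,p))$ is essentially equivalent to a weighted upper-tail event for the number of $\uH^*$-copies in $\G(n,p)$, to which the BGLZ variational framework can be applied; its minimizer is a blown-up copy of $\uH^*$ matching the planting from the lower bound, yielding the rate $\theta_{\uH^*} n^2 p^\Delta \log(1/p)$.

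The principal obstacle is the variational analysis for irregular $\uH^*$. Since $\uH^*$ need not itself be regular (its own maximum degree $\Delta_{\uH^*}$ can be strictly less than $\Delta$), one cannot directly invoke the regular-case BGLZ result for $\uH^*$ in $\G(n,p)$: that would produce a rate of order $n^2 p^{\Delta_{\uH^*}} \log(1/p)$ rather than the required $n^2 p^\Delta \log(1/p)$. The correct formulation is a weighted variational problem in which each planted edge is charged against the full $\Delta$-degree it carries through extensions within $\uH$, rather than its local degree in $\uH^*$. Rigorously identifying this weighted problem, and showing that its minimum is encoded by the independence polynomial $P_{\uH^*}$, is the core technical content. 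I would approach it by classifying candidate plantings according to whether both, one, or neither endpoint of each planted edge plays a $V^*$-role in some embedded $\uH$-copy, and by applying a H\"older/convexity inequality to rule out plantings whose support strays beyond pairs of $V^*$-positions. An additional subtlety arises when $\uH^*$ is edgeless (e.g.\ $\uH = K_{2,3}$): there are then no inter-block planted edges within $V^*$-blocks, and the relevant planted structure consists only of edges between $V^*$-position blocks and external extension-vertex blocks, which must be handled as a separate (but analogous) case of the variational argument.
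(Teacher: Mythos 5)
Your lower-bound construction is wrong for irregular $\uH$, and this error propagates through the whole proposal. A blow-up of $\uH^*$ with blocks of size $\asymp np^{\Delta/2}$ does \emph{not} boost $N(\uH,\G(n,p))$: the copies of $\uH$ it creates number about $\prod_i|B_i|\cdot n^{v_\uH-v_{\uH^*}}p^{e_\uH-e_{\uH^*}}$, and the ratio of this to $n^{v_\uH}p^{e_\uH}$ is $\asymp p^{\Delta v_{\uH^*}/2-e_{\uH^*}}\to 0$, since $e_{\uH^*}<\Delta v_{\uH^*}/2$ strictly whenever $\uH$ is irregular (equality would force $\uH^*$ to be $\Delta$-regular and, by connectivity, equal to $\uH$). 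The clique/blow-up construction is the one relevant to \emph{regular} graphs. For irregular $\uH$ in the regime $n^{-1/\Delta}\ll p\ll 1$ the correct planted structure is a \emph{hub}: a set of $\theta np^{\Delta}$ vertices joined to all of $\set{n}$, costing $\theta n^2p^{\Delta}$ edges. Copies of $\uH$ gain by mapping an independent set $I$ of max-degree vertices onto hub vertices (adjacent pairs contribute a vanishing factor), producing the multiplicative boost $\sum_{I}\theta^{|I|}=P_{\uH^*}(\theta)$ --- this is precisely where the independence polynomial comes from, and your construction cannot recover it. Relatedly, your ``principal obstacle'' is a solved problem: \cite{BGLZ} already computes the variational asymptotics for irregular graphs (this is \eqref{limit:variational problem} in the paper), so no new weighted variational analysis is needed.

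Your upper bound also rests on a false reduction. The claimed ``extension-concentration lemma'' --- that for a fixed embedding of $\uH^*$ the number of extensions to a full $\uH$-copy concentrates --- cannot hold on the upper-tail event, because those extensions use the edges incident to the images of $V^*$, and it is exactly the anomalous abundance of such edges (vertices of near-linear degree) that produces the deviation. If extension counts concentrated, there would be no upper tail at this speed at all; and when $\uH^*$ is edgeless (e.g.\ $\uH=K_{1,r}$ or $K_{2,3}$) the ``number of $\uH^*$-copies'' is deterministic, so the reduction is vacuous rather than a ``separate but analogous case.'' The paper's route is entirely different: it invokes \cite[Theorem~3.1]{HMS} to reduce the theorem to an entropic stability estimate \eqref{eq:entropic stability HMS}, and the genuinely new content is the counting argument showing that any core graph $\tu{G}\in\I^*$ has all but $m_0\ll e(\tu{G})$ of its edges incident to a set $B_*(\tu{G})$ of at most $2e(\tu{G})n^{-1}p^{-\varepsilon}$ vertices of degree $\geq np^{\varepsilon}$ (Lemmas \ref{s1:simplelowerbound}--\ref{s1:sum of degrees}), which yields $|\I^*_m|\leq p^{-5\varepsilon m}$ down to $p\gg n^{-1/\Delta}$. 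Your proposal contains no substitute for this entropic stability step, which is what actually extends the known range of $p$.
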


	Upper tail large deviations of general subgraph counts in $r$-uniform Erd\H{o}s-R\'enyi random hypergraphs have been studied in \cite{CDP}. Their result when translated to Erd\H{o}s-R\'enyi random graphs ($r=2$) yield the upper tail large deviations for subgraph counts for any irregular subgraph $\uH$, with maximum degree $\Delta$, in the regime $n^{-1/\widetilde \Delta} \ll p \ll 1$, where $\widetilde \Delta = \Delta$ if $H$ is a star graph, and $\widetilde \Delta = \Delta +1$ otherwise. 
	Thus Theorem \ref{thm:loc-1} improves that result.  
	
	With some additional work we further obtain the typical structure of $\G(n,p)$ conditioned on the atypical upper tail event $\UTH$. To state the result we need a few notation. For any vertex $u$ in a graph $\tu{G}$ we write $\deg_{\tu{G}}(u)$ to denotes its degree. For $U_1$ and $U_2$ subsets of the vertex set of a graph $\tu{G}$ we write $\tu{G}([U_1,U_2])$ to denote the bipartite subgraph induced by edges with one endpoint in $U_1$ and the other in $U_2$. Moreover, for any graph $\tu{G}'$ we write $e(\tu{G}')$ to denote its number of edges. %in this subgraph.
	
	\begin{theorem}
		\label{cond_struc:loc-1}
		Consider the same setup as in Theorem \ref{thm:loc-1}.  Then for any fixed $\chi\in (0,1)$ there exists some constant $c_\chi >0$ such that
		\begin{equation*}
			\P \left(\tu{Hub}_\chi(\delta)\ \big|\ \UTH\right) \ge 1 - \exp(-c_\chi r_{n,p}),
		\end{equation*} 
		or all large $n$, where $r_{n,p} \coloneqq n^2 p^\Delta \log(1/p)$ and
		\begin{equation}
			\label{Hub:defn}
			\begin{aligned}
				\tu{Hub}_\chi(\delta)\coloneqq \big\{\G(n,p)\tu{ contains a set } &U\subseteq \set{n} \tu{ s.t. } \tu{deg}_{\G(n,p)}(u)\geq (1-\chi)n\ \forall u\in U \tu{ and } \\
				&e({\G(n,p)}\left[U,\set{n}\setminus U\right])\geq (1-\chi)\theta_{\uH^{*}}n^{2}p^{\Delta}\big\}.
			\end{aligned}
		\end{equation}
		
		%	where $\deg_{\tu{G}}(u)$ denotes the degree of a vertex $u$ in 
		%	$e(U,\set{n}\setminus U)$ denotes the number of edges in $\G(n,p)$ with only one endpoint in $U$. 
	\end{theorem}
	
	Observe that Theorem \ref{cond_struc:loc-1} shows that conditioned on the upper tail event $\UTH$ the Erd\H{o}s-R\'enyi random graph contains a subgraph that is `close' to a complete bipartite graph (often termed as a hub in the literature) with probability approaching one. Although the primary focus of this paper is to study the upper tail problem for irregular graphs, the arguments employed in the proof of Theorem \ref{cond_struc:loc-1} can be extended to obtain the typical structure of $\G(n,p)$ conditioned on the upper tail event of a regular graph. We include that result in this article for its potential future usage. To state it we need to introduce some notation.
	For $U$ a subset of the vertex set of $\tu{G}$, we write $\tu{G}[U]$ to denote the subgraph induced by the vertices in $U$. %and further let $e(\tu{G}[U])$ to be the number of edges in this subgraph.
	\begin{theorem}
		\label{thm: cond struc regular graph}
		
		Let $\Delta\geq 2 $ be an integer, and $\uH$ be a connected $\Delta$-regular graph. Fix $\delta>0$. Assume $p\in (0,1)$ satisfy $p \ll 1$ and $np^{\Delta/2}\gg (\log n)^{1/(v_{\uH}-2)}$. For any fixed $\chi\in (0,1)$ there exists some constant $c_\chi > 0$ such that the following hold.
		
		\begin{itemize}
			\item[(a)] If $np^{\Delta}\gg 1$ then we have
			\begin{equation}\label{eq:clique-hub}
				\P\left( \tu{Clique}_{\chi}(\delta)\cup \tu{Hub}_{\chi}(\delta)\big| \UTH\right) \geq 1 - \exp(-c_\chi r_{n,p}),
			\end{equation}
			\text{ for all large $n$}, where
			\begin{equation}\label{eq:clique-or-hub}
				\begin{aligned}
					\tu{Clique}_{\chi}(\delta) \coloneqq \big\{\G(n,p) \tu{ contains a set }&U\subseteq \set{n} \tu{ of size at least }(1-\chi)\delta^{1/v_{\uH}}np^{\Delta/2}\tu{ and }\\
					&\min_{u\in U} \tu{deg}_{\G(n,p)[U]} (v)\geq (1-\chi) |U|\big\}.
				\end{aligned}
			\end{equation}
			%where $\tu{Hub}_{\chi}(\delta)$ is defined as in \eqref{Hub:defn}.
			Moreover, there exists some $\delta_0(\tu{H})$ such that, 
			for all large $n$ %if $\delta^{2/v_{\tu{H}}}/2 < \theta_{\tu{H}^*}(\delta)$ %($=\theta_{\tu{H}^*}(\delta)$, since $\tu{H}$ regular implies $\tu{H}=\tu{H}^*$) 
			we have
			\[
			\P\left(  \tu{Hub}_{\chi}(\delta)\big| \UTH\right) {\mathds{1}}_{\delta > \delta_0(\tu{H})}+ \P\left(  \tu{Clique}_{\chi}(\delta)\big| \UTH\right){\bf{1}}_{\delta < \delta_0(\tu{H})}  \leq \exp(-c_\chi r_{n,p}),
			\]

			\item[(b)] If $np^{\Delta}\ll 1$ and $\tu{H}$ is non-bipartite then for all large $n$ we have
			\begin{equation*}
				\lim_{n\rightarrow \infty } \P\left( \tu{Clique}_{\chi}(\delta)\big| \UTH\right) \geq 1 - \exp(-c_\chi r_{n,p}).
			\end{equation*}
			%			\cyan{for all large $n$}, where
			%			
			%					while for $\delta^{2/v_{\tu{H}}}/2 > \theta_{\tu{H}^*}(\delta)$ we have
			%					\[
			%					\P\left(  \tu{Clique}_{\chi}(\delta)\big| \UTH\right) \leq \exp(-c_\chi r_{n,p}),
			%					\]
			%					for all large $n$.
			%			
			%					\begin{equation*}
				%						\exp(-c_\chi r_{n,p})	\geq \begin{cases}
					%									\P\left(  \tu{Hub}_{\chi}(\delta)\big| \UTH\right)\quad &\text{if $np^{r}\rightarrow 0$},\\
					%									\frac{1}{r\rho^{1/r} }\left(\lfloor\delta \rho\rfloor+\{\delta\rho\}^{1/r}\right) &\text{if $np^{r}\rightarrow \rho\in (0,\infty)$}.%\\
					%									%\delta n^{2}p^{r}\log(1/p) &\text{if $np^{r}\rightarrow \infty$}
					%								\end{cases}
				%						\end{equation*}
			
		\end{itemize}

	\end{theorem}
	
	Theorem \ref{thm: cond struc regular graph} was proved in \cite{HMS} for $\uH=K_r$, the clique on $r$ vertices. In the context of homomorphism densities an analog of Theorem \ref{thm: cond struc regular graph} was obtained in \cite{CD24} (their result allows one to condition on the intersection of the upper tail events of multiple subgraphs) in the regime $p \gg n^{-1/(\Delta+1)}$, where $\Delta$ is the maximal degree of the subgraphs under consideration. %Let us mention in passing that proceeding as in the proof of \cite[Theorem 1.5]{CD24} %Building on such a result, in the same regime, they further obtained the typical structure of exponential random graph models with hamiltonians that are functions of homomorphism densities of connected subgraphs and satisfy certain growth conditions (cf.~\cite[Theorem 1.5]{CD24}). 
	Theorem \ref{thm: cond struc regular graph} is proved in Appendix \ref{sec:proof-regular}.
	%	\purple{
		%		For $U$ a subset of the vertex set of $\tu{G}$, we write $\tu{G}[U]$ to denote the subgraph induced by vertices in $U$ and further let $e(\tu{G}[U])$ to be the number of edges in this subgraph.
		%	\begin{theorem}
			%		\label{thm: cond struc regular graph}
			%		
			%		Let $\Delta\geq 2 $ be an integer, and $\uH$ be a connected $\Delta$-regular graph. Fix $\delta>0$ and let $p\in (0,1)$ satisfy $n^{-1/\Delta}\ll p\ll 1$. Then for any fixed $\chi \in (0,1)$, we have 
			%		\begin{equation*}
				%			\lim_{n\rightarrow \infty } \P\left( \tu{Clique}_{\chi}(\delta)\cup \tu{Hub}_{\chi}(\delta)\big| \UTH\right) =1,
				%		\end{equation*}
			%	where $\tu{Hub}_{\chi}(\delta)$ is defined as in \eqref{Hub:defn} and 
			%	\begin{equation*}
				%		\begin{aligned}
					%		\tu{Clique}_{\chi}(\delta) = \big\{\G(n,p) \tu{ contains a set }&U\subseteq \set{n} \tu{ of size at least }(1-\chi)\delta^{1/v_{\uH}}np^{\Delta/2}\tu{ and }\\
					%		&\min_{u\in U} \tu{deg}_{\G(n,p)[U]} (v)\geq (1-\chi) |U|\big\}.
					%		\end{aligned}
				%	\end{equation*}
			%	\end{theorem}}
	
	\subsection{Poisson regime} For any graph $\tu{J}$ %we write $v_{\tu{J}}$ and $e_{\tu{J}}$ to denote the number of vertices and edges in it, respectively.   
	a fractional independent set is a map $\alpha:V(\tu{J})\rightarrow [0,1]$ such that $\alpha(u)+\alpha(v)\leq 1$ whenever $uv\in E(\tu{J})$. The {\em fractional independence number} of $\tu{J}$, denoted by $\alpha^{*}_{J}$, is the largest value of $\sum_{v\in V(\tu{J})}\alpha(v)$ among all fractional independent sets $\alpha$ in $\tu{J}$.
	The next result obtains the upper tail large deviations of the number of copies of $\uH$ for any {{\em strictly balanced graph}} (i.e.~$e_{\uH}/v_{H} > e_{\tu{J}}/v_{\tu{J}}$ for any proper subgraph $\tu{J}$ of $\uH$). It is a  generalization of \cite[Theorem 1.6]{HMS} and we believe that the range of $p$ and the choice of $\uH$ in the theorem below is optimal.  
	\begin{theorem}\label{thm:poi}
		Let $\uH$ be a strictly balanced graph. %e\uH/vH>e\tuJ/v\tuJe_{\uH}/v_{H} > e_{\tu{J}}/v_{\tu{J}} for any proper subgraph \tuJ\tu{J} of \uH\uH). 
		Fix $\delta>0$. For $p\in (0,1)$ satisfying $1 \ll n^{v_{\uH}}p^{e_{\uH}}\ll (\log n)^{\alpha^{*}_{\uH}/\left(\alpha^{*}_{\uH}-1\right)}$, we have
		\begin{equation*}
			\lim_{n\rightarrow \infty} \frac{-\log \P\left(N(\uH, \G(n,p))\geq (1+\delta)n^{v_{\uH}}p^{e_{\uH}}\right)}{ n^{v_{\uH}}p^{e_{\uH}}/ \tu{Aut}({\uH})} = (1+\delta)\log(1+\delta)-\delta,
		\end{equation*}
		where $\tu{Aut}(\uH)$ is the number of bijective maps $\phi: V(\uH)\mapsto V(\uH)$ such that $(u,v)\in E(\uH)$ if and only if $(\phi(u), \phi(v))\in E(\uH)$.
	\end{theorem}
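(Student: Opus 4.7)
Write $X := N(\uH, \G(n,p))/\tu{Aut}(\uH)$ for the number of unlabelled copies, $\lambda := n^{v_\uH} p^{e_\uH}/\tu{Aut}(\uH)$, and $I(\delta) := (1+\delta)\log(1+\delta) - \delta$. The plan is to produce matching bounds of the form $\log\P(X \geq (1+\delta)\lambda) = -(1+o(1))\lambda I(\delta)$, reflecting a Poisson-type tail. Decompose $X = \sum_{J\in\I} X_J$, where $\I$ indexes unlabelled copies of $\uH$ in $K_n$ and $X_J$ is the indicator that $J \subseteq \G(n,p)$.

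For the upper bound on the probability, I would use Markov's inequality with exponential tilt $t := \log(1+\delta)$:
\[
\P(X \geq (1+\delta)\lambda) \leq e^{-t(1+\delta)\lambda}\sum_{k\geq 0}\frac{(e^t-1)^k}{k!}\,\E[X^{(k)}].
\]
The crux is a factorial moment estimate $\E[X^{(k)}] \leq \lambda^k(1+o(1))^k$ valid for $k$ up to a threshold comfortably larger than $(1+\delta)\lambda$, combined with the crude bound $X \leq \binom{n}{v_\uH}$ for larger $k$. Summing gives $\E[e^{tX}] \leq \exp((e^t-1)\lambda(1+o(1)))$, yielding the Poisson rate after optimizing over $t$. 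This factorial moment estimate is proved by partitioning the $k$-tuples $(J_1,\ldots,J_k)$ of copies according to their intersection pattern: vertex-disjoint tuples already contribute $\lambda^k(1+o(1))$, and overlapping tuples are controlled via strict balancedness. Indeed, any proper $\tu{J} \subsetneq \uH$ satisfies $v_\uH e_{\tu{J}} < v_{\tu{J}} e_\uH$, so the factor $n^{v_{\tu{J}}}p^{e_{\tu{J}}}$ by which a two-copy overlap along $\tu{J}$ is suppressed relative to a vertex-disjoint pair diverges whenever $\lambda \to \infty$. The accumulation of such suppressions across all overlap patterns is uniformly controlled as long as $\lambda \ll (\log n)^{\alpha^*_\uH/(\alpha^*_\uH-1)}$, the regime in which fractional-independence-based ``hub'' configurations are not yet efficient.

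For the lower bound on the probability, I would use a local Poisson approximation. Upgrading the factorial moment bound to a two-sided asymptotic $\E[X^{(k)}] = \lambda^k(1+o(1))^k$ and applying Bonferroni-type inclusion--exclusion lets one compare point probabilities of $X$ with those of $Y \sim \tu{Poi}(\lambda)$ in a window of width $\sqrt{\lambda}\log\log\lambda$ around $(1+\delta)\lambda$; summing transfers the Poisson lower tail $\P(Y \geq (1+\delta)\lambda) \geq \exp(-(1+o(1))\lambda I(\delta))$ directly to $X$. A cleaner alternative is to invoke Suen's inequality in its Poisson-limit form, which, for strictly balanced $\uH$ in the present regime, yields this local Poisson behaviour directly.

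The main obstacle is controlling $\E[X^{(k)}]$ for $k$ of the same order as $\lambda$ with a multiplicative error of $(1+o(1))^k$ rather than merely $(1+o(1))$: a constant multiplicative error in the base would corrupt the rate function by a constant factor. The sharpness of the regime $\lambda \ll (\log n)^{\alpha^*_\uH/(\alpha^*_\uH-1)}$ is precisely the point at which the most efficient overlap pattern, optimized against the fractional independence structure of $\uH$, just begins to contribute non-trivially; classifying and bounding all such overlap patterns for a general strictly balanced $\uH$ is the technical heart of the argument.
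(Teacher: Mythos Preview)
Your approach differs substantially from the paper's. The paper does not attempt direct factorial-moment control; instead it invokes the cluster machinery of \cite{HMS}, specifically \cite[Proposition~8.3]{HMS} and \cite[Lemma~8.5]{HMS}, which deliver the Poisson tail as soon as one verifies the single estimate $\E[D_s(X_\uH)] \leq \exp(-Ks)$ for all $2 \leq s \leq (\delta+\varepsilon)\mu_\uH$, where $D_s$ counts \emph{clusters} of size $s$ (edge-connected collections of $s$ distinct copies of $\uH$). The paper then establishes this cluster bound by adapting \cite[Claim~8.7]{HMS}: strict balancedness is used in the quantitative form $e_\uH - e_{\tu{J}} \geq \lambda(1 + (1/\lambda^{*}-1/\lambda)/v_\uH)(v_\uH - v_{\tu{J}})$, and the number of copies of $\uH$ inside a graph with $m$ edges is bounded by $(2m)^{\alpha^{*}_\uH}$ via \cite[Theorem~5.7]{HMS}. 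This last bound is exactly what produces the exponent $\alpha^{*}_\uH/(\alpha^{*}_\uH-1)$ in the hypothesis.

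Your route via the identity $\E[e^{tX}] = \sum_k \frac{(e^t-1)^k}{k!}\E[X^{(k)}]$ is natural, and the pairwise overlap analysis you sketch is correct. But the statement you actually need, $\E[X^{(k)}] \leq \lambda^k(1+o(1))^k$ uniformly for $k$ up to a constant multiple of $\lambda$, is precisely what the cluster decomposition is designed to deliver, and you have not indicated how to prove it directly. The obstacle you flag in your final paragraph is real: if you tried to fill this in, you would almost certainly end up grouping the $k$ copies by their edge-connected components and bounding each component's contribution separately, at which point you are reproving the HMS machinery. The paper's approach buys you that machinery off the shelf, reducing the work to the cluster-count inequality \eqref{pr: ineq 1}. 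Your lower-bound sketch via Bonferroni or Suen is similarly plausible but vague; in the paper's route, \cite[Lemma~8.5]{HMS} handles both directions from the same cluster input.
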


	The assumption on $\uH$ in Theorem \ref{thm:poi} is necessary to get the Poisson behavior (even the speed) near the appearance threshold, e.g.~in the regime $1 \ll n^{v_\uH} p^{e_\uH} \ll (\log n)^{c_\uH}$, where $c_\uH >0$ is some constant (see the counterexample  in \cite[Theorem 1]{counterexample}). On the other hand, if $n^{v_\uH} p^{e_\uH} \asymp 1$ then it is well known that the random $N(\uH,\G(n,p))$ is asymptotically Poisson (see \cite[Theorem 1]{KR83}) for any strictly balanced graph $\uH$. For balanced graphs the asymptotic behavior of $N(\uH, \G(n,p))$ at the appearance threshold is more involved and the limit is not necessarily a Poisson random variable.

	\subsection{Localized regime II} Theorems \ref{thm:loc-1} and \ref{thm:poi} leave open the range $n^{v_\uH} p^{e_\uH} \gg (\log n)^{\alpha^{*}_{\uH}/\left(\alpha^{*}_{\uH}-1\right)} $ and $p \ll n^{-1/\Delta}$. The following result yields large deviations in that intermediate regime for $H=K_{1,r}$, the $r$-armed star graph, for $r \ge 2$. 
	\begin{theorem}\label{thm:loc-2}
		Let $r\geq 2$. Fix $\delta>0$. Then for $p\in(0,1)$ satisfying $p\lesssim n^{-1/r}$ and $n^{r+1}p^{r}\gg (\log n)^{r/(r-1)}$, we have
		\begin{equation*}
			\lim_{n \to \infty}  \frac{-\log\P\left(N(K_{1,r}, \G(n,p))\geq (1+\delta)n^{r+1}p^r\right)}{n^{1+1/r} p \log n} = \begin{cases}
				\frac{1}{r}\delta^{1/r}\quad &\text{if $np^{r}\rightarrow 0$},\\
				\frac{1}{r\rho^{1/r} }\left(\lfloor\delta \rho\rfloor+\{\delta\rho\}^{1/r}\right) &\text{if $np^{r}\rightarrow \rho\in (0,\infty)$}.%\\
				%\delta n^{2}p^{r}\log(1/p) &\text{if $np^{r}\rightarrow \infty$}
			\end{cases}
		\end{equation*}
	\end{theorem}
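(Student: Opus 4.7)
The starting identity is $N(K_{1,r}, \G(n,p)) = \sum_{v=1}^n (d_v)_r$, where $d_v$ denotes the degree of $v$ in $\G(n,p)$ and $(d_v)_r := d_v(d_v-1)\cdots(d_v-r+1)$; this reduces the upper tail of the star count to one for the $r$-th falling moment of the degree sequence. The plan is to identify the optimal large-deviation mechanism as planting a few ``hub'' vertices of high degree. A single hub of degree $D$ contributes $\approx D^r$ to the star count at cost $\exp(-(1+o(1))D\log(D/(np)))$ via the Binomial tail $\P(\tu{Bin}(n-1,p)\geq D)$. The $k$-hub optimization is therefore to minimize $\sum_{i=1}^k D_i\log(D_i/(np))$ subject to $\sum_i D_i^r \geq \delta n^{r+1}p^r$ and $D_i\leq n-1$. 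In Case 1 ($np^r\to 0$) the cap $D_i\leq n-1$ is slack, giving a single hub of degree $D = \delta^{1/r}n^{1+1/r}p$. In Case 2 ($np^r\to\rho$) a single hub saturates whenever $\delta\rho > 1$, and the optimum is $\lfloor\delta\rho\rfloor$ saturated hubs of degree $\sim n$ together with one additional hub of degree $\{\delta\rho\}^{1/r}n$. Substituting $\log(D/(np)) = (1+o(1))r^{-1}\log n$ into the resulting costs yields the announced rates.

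For the lower bound on $\P(N(K_{1,r},\G(n,p))\geq (1+\delta)n^{r+1}p^r)$, I would fix the planting profile from above, pick distinct vertices $v_1,\ldots,v_k \in \set{n}$, and lower-bound the upper-tail probability by $\P(d_{v_i}\geq D_i,\ i=1,\ldots,k)$. The hubs share at most $\binom{k}{2}$ edges, which is negligible compared to the total cost, so the probability essentially factorizes into individual Binomial tails and standard asymptotics yield $\exp\bigl(-(1+o(1))\sum_i D_i\log(D_i/(np))\bigr)$. On this event, the bulk contribution $\sum_{v\notin\{v_1,\ldots,v_k\}}(d_v)_r$ concentrates around $n^{r+1}p^r$ because the assumption $n^{r+1}p^r\gg (\log n)^{r/(r-1)}$ places the expected star count well above the relevant concentration scale. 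Hence $N(K_{1,r},\G(n,p))$ exceeds $(1+\delta)n^{r+1}p^r$ with probability bounded below on the planted event, producing the required lower bound on $\P$.

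The matching upper bound on the probability is the main obstacle. Fix a threshold $\tau$ with $np\ll \tau\ll n^{1+1/r}p$ and set $S := \{v:d_v\geq \tau\}$. Decompose $N = N_S + N_{S^c}$ with $N_A := \sum_{v\in A}(d_v)_r$. Two ingredients are needed: (i) a sharp concentration bound $\P(N_{S^c}\geq (1+\delta/2)n^{r+1}p^r)\leq \exp(-\omega(n^{1+1/r}p\log n))$, obtained by applying a Kim--Vu-type polynomial concentration inequality to the degree-$r$ polynomial $N_{S^c}$ after conditioning on the edges incident to $S$, exploiting that the truncation at $\tau$ controls the maximal partial derivatives; and (ii) for each candidate hub set $S$ and degree profile $(D_v)_{v\in S}$, an upper bound on $\P(d_v\geq D_v\text{ for all } v\in S)$ by an approximate product of Binomial tails (with a negligible correction for the $\binom{|S|}{2}$ shared edges). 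A union bound over $S$ (entropic factor $\binom{n}{|S|}\leq \exp(|S|\log n)$) followed by optimization over $|S|$ and $(D_v)$ recovers the planting profile from the first paragraph; in Case 2, the integrality of $|S|$ together with the cap $D_v\leq n-1$ is precisely what produces the floor function $\lfloor\delta\rho\rfloor$ in the rate. The principal difficulty is ingredient (i): naive concentration of the star count is too crude at the exponential scale $n^{1+1/r}p\log n$ to rule out scenarios with many mildly-anomalous vertices jointly producing the excess, so a polynomial concentration inequality specifically adapted to the truncation at $\tau$ is required.
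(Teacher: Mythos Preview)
Your lower bound is essentially the paper's. For the upper bound, however, the paper does \emph{not} work through the degree identity $N(K_{1,r},\G(n,p))=\sum_v(d_v)_r$. It runs the container-type machinery of \cite{HMS,ABRB} instead: one first shows (via \cite[Lemma~3.6]{HMS}) that $\UT$ forces $\G(n,p)$ to contain a \emph{core graph} $\tu{G}$ with $N(K_{1,r},\tu{G})\geq\delta(1-3\varepsilon)n^{r+1}p^r$, $e(\tu{G})\leq\cbar n^{1+1/r}p\log(1/p)$, and every edge participating in many copies of $K_{1,r}$. The last property yields a lower bound on the product of the degrees of the endpoints of each edge, which forces the subgraph of any core induced by edges touching low-degree vertices to be bipartite; an entropic-stability count in the style of \cite[Lemma~4.7]{ABRB} then bounds the number of cores with $m$ edges by $p^{-O(\varepsilon)m}$, and a union bound weighted by $p^{m}$ finishes. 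The rate constants come from two embedding inequalities proved along the way: the sharp bound $N(K_{1,r},\tu{G})\leq e(\tu{G})^{r}$ gives $e(\tu{G})\geq(1-o(1))\delta^{1/r}n^{1+1/r}p$ and hence the constant $\delta^{1/r}/r$, while a refined bipartite embedding bound produces $\lfloor\delta\rho\rfloor+\{\delta\rho\}^{1/r}$ when $np^{r}\to\rho$. Your degree-based route is that of \cite{AS25}, which the paper explicitly contrasts with its own in the remark following the theorem; the core-graph method is less direct for stars but slots into the same framework as Theorem~\ref{thm:loc-1} and is not specific to $K_{1,r}$.

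The substantive gap in your outline is step (i). A black-box Kim--Vu bound on the degree-$r$ polynomial yields a tail of order $\exp\bigl(-c(np^{1/2})^{1/r}\bigr)$, which near $p\asymp n^{-1/r}$ is $\exp\bigl(-n^{1/r-1/(2r^{2})}\bigr)$, far short of the required scale $\exp(-\Theta(n\log n))$. Truncation at $\tau$ does not straightforwardly repair this: the set $S=\{v:d_v\geq\tau\}$ is itself random, so ``conditioning on the edges incident to $S$'' is not a well-posed step, and replacing it by the deterministic truncation $\sum_v\min\bigl((d_v)_r,\tau^r\bigr)$ and applying bounded differences still fails, e.g.\ for $r=2$ at $p\asymp n^{-1/2}$, where the resulting window $np\ll\tau\ll np/(\log n)^{1/2}$ is empty. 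What makes the degree route go through in \cite{AS25} is a bespoke large-deviation analysis of the binomial degree sequence (cf.\ \cite[Lemma~8]{AS25}), not an off-the-shelf polynomial concentration inequality; as written, your step (i) names the main difficulty rather than resolving it.
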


	\begin{remark}
		The recent preprint \cite{AS25} also studies the upper tail probability of the number of copies of $K_{1,r}$. A key to their proof is the observation that $N(K_{1,r}, \G(n,p))$ can be expressed as a sum of some function of the degrees of vertices in $\G(n,p)$. This essentially allowed them to focus on understanding the asymptotic of the log-probability for these i.i.d.~binomial random variables (see \cite[Lemma 8]{AS25}).

		%In the localized regime, using a result of Warnke \cite{W17},  they show that the number of copies of $K_{1,r}$, with the degree of the center vertex less than certain threshold $R$ (depending on $n, p, r$, and $\delta$), cannot be greater than $(1+\varepsilon) n^{r+1}p^r$, for any $\varepsilon >0$, at the large deviations scale. To bound the upper tail probability of the event the number of copies of $K_{1,r}$ not considered in the previous step, they note that $N(K_{1,r}, \G(n,p))$ can be expressed as a sum of some function of the degrees of vertices in $\G(n,p)$.  

		This approach does not seem to be amenable to address upper tail probabilities of subgraph counts for irregular graphs $\uH \ne K_{1,r}$, even in the regime $p \gg n^{-1/\Delta}$, as considered in Theorem \ref{thm:loc-1}. 
		Our proof follows a different route which is based on the machineries introduced in \cite{HMS, ABRB}. 
	\end{remark}
	%	\purple{
		%	\begin{theorem}
			%			
			%		Let $r\geq 2$. Fix $\delta>0$. Assume $p\in (0,1)$ satisfying $n^{r+1}p^r\gg (\log n)^{r/(r-1)}$. If $np^{r}\ll 1$, then for fixed $\chi\in(0,1)$, we have
			%		\begin{equation*}
				%			\lim_{n\rightarrow \infty }\P\left(\max_{u\in \set{n}}\tu{deg}_{\G(n,p)}(u)\geq (1-\chi)\delta^{1/r} n^{1+1/r}p \bigg| \tu{UT}_{K_{1,r}}(\delta)\right) = 1.
				%		\end{equation*}
			%	And if $np^{r}\rightarrow \rho\in (0,\infty)$, then for fixed $\chi\in (0,1)$, we have
			%	\begin{equation*}
				%		\lim_{n\rightarrow \infty}\P\left(\tu{Hub}_{\chi}(\delta)\big| \UT\right) = 1,
				%	\end{equation*}
			%	where 
			%	\begin{equation*}
				%		\begin{aligned}
					%			\tu{Hub}_{\chi}(\delta) = \bigg\{&\G(n,p)\tu{ contains a set }U\subseteq \set{n} \tu{ s.t. at least }\lfloor(1-\chi)|U|\rfloor \tu{ vertices in }U\tu{ have degree}\\
					%			&\tu{at least }(1-\chi)n \tu{ and }e(\G(n,p)[U, \set{n}\setminus U])\geq (1-\eta)\left(\lfloor\delta\rho\rfloor+\{\delta\rho\}^{1/r}\right)\cdot n\bigg\}.
					%		\end{aligned}
				%	\end{equation*}
			%	\end{theorem}}
	The next result derives the typical structure of $\G(n,p)$ conditioned on the upper tail rare event of $K_{1,r}$ in the setting of Theorem \ref{thm:loc-2}.
	\begin{theorem}
		\label{cond_struc:star loc-2}
		Let $r\geq 2$. Fix $\delta>0$. Assume $p\in (0,1)$, satisfies $n^{r+1}p^{r}\gg \left(\log n\right)^{r/(r-1)}$. For any fixed $\chi\in (0,1)$, there exists $c_\chi >0$ such that the following hold.
		\begin{itemize}
			\item[(a)] If $np^{r}\ll 1$ then %for any fixed $\chi\in (0,1)$, %\cyan{there exists $c_\chi >0$ such that}
			\begin{equation*}
				\P\left(\tu{High-degree}_{\chi}(\delta)\big| \tu{UT}_{K_{1,r}}(\delta)\right)  \geq 1 - \exp(-c_\chi \widetilde{r}_{n,p}),
			\end{equation*}
			for all large $n$, where $\widetilde{r}_{n,p}\coloneqq n^{1+1/r} p \log n$ and
			\begin{equation*}
				\tu{High-degree}_{\chi}(\delta)\coloneqq \left\{\max_{v \in \set{n}} \deg_{\G(n,p)}(v) \geq (1-\chi)\delta n^{1+1/r}p \right\}.
			\end{equation*}
			\item[(b)] If $np^{r}\rightarrow \rho\in (0,\infty)$ then %for any fixed $\chi\in (0,1)$, we have
			\begin{equation*}
				\P\left(\widetilde{\tu{Hub}}_{\chi}(\delta)\big| \tu{UT}_{K_{1,r}}(\delta)\right) \geq 1-\exp(-c_\chi \widetilde{r}_{n,p}),
			\end{equation*}
			for all large $n$, where 
			\begin{equation*}
				\begin{aligned}
					\widetilde{\tu{Hub}}_{\chi}(\delta)\coloneqq &\bigg\{%\G(n,p)\tu{ contains a set } 
					\exists U\subseteq \set{n}\tu{ of size }\lfloor\delta\rho\rfloor\tu{ such that } \min_{u \in U} \deg_{\G(n,p)}(u) \geq (1-\chi)n\\& \qquad \qquad \qquad  \tu{ and }  \max_{v \in \set{n} \setminus U} \deg_{\G(n,p)}(v) \geq  (1-\chi)\{\delta\rho\}^{1/r}n  \bigg\}.
				\end{aligned}
			\end{equation*}
		\end{itemize}
	\end{theorem}
	
	Comparing Theorem \ref{cond_struc:loc-1} with Theorem \ref{cond_struc:star loc-2} we see that the difference between the regimes $p \gg n^{-1/r}$ and $p \asymp n^{-1/r}$ is that in the former regime $\G(n,p)$, conditioned on the rare event $\tu{UT}_{K_{1,r}}(\delta)$, typically contains an almost complete bipartite graph of appropriate size, while in the latter regime it may additionally contain a vertex of high degree, depending on whether $\delta \rho \notin \N$ or not. For $p \ll n^{-1/r}$ (and $p \gg n^{-(1+1/r)} (\log n)^{1/(r-1)}$) the first scenario ceases to appear and in that regime $\G(n,p)$, conditioned on $\tu{UT}_{K_{1,r}}(\delta)$, typically contains a vertex of high degree.

	\subsection{Na\"ive mean-field approximation}\label{sec:mean-field}
	For a function $h: \{0,1\}^N \mapsto \R$ and the uniform measure $\mu$ on ${\sf C}_N:= \{0,1\}^N$, the na\"ive mean-field variational problem provides an approximation for the {\em $\log$-partition function} $\log Z_h$, where $Z_h:=\int \exp(h) d\mu$.  
	There have been several works, in different settings, attempting to find sufficient conditions on $h(\cdot)$, e.g.~appropriate {\em low-complexity} conditions on the (discrete) gradient of $h(\cdot)$, such that the mean-field approximation is asymptotically tight (see \cite{augeri, austin, BM, chatterjeedembo, eldan, Yan}). 
	A heuristic computation shows that the mean-field variational problem for the logarithm of the upper tail probability  $\mu_p(f \ge (1+\delta) \E_{\mu_p}[f])$, where $f : [0,1]^N \mapsto \R$ is some `nice' function, $\delta >0$, and $\mu_p$ is the product of $N$ i.i.d.~$\dBer(p)$ measures, is formulated as below:
	%\beq\label{eq:ut-mf}
	%\log \P(f \ge (1+\delta) \E_{\mu_p}[f]) = - (1+o(1)) \Psi_p(f, \delta),
	%\eeq
	\begin{equation}\label{eq:mf-vp3}
		\Psi_{p,f}(\delta):= \inf \left\{ I_p({\bm \xi}): {\bm \xi} \in [0,1]^N \text{ and } \E_{\mu_{\bm \xi}}[f] \ge (1+\delta) \E_{\mu_p}[f]\right\},
	\end{equation}
	\[
	I_p({\bm \xi}):= \sum_{\upalpha=1}^N I_p(\xi_i), \quad {\bm \xi}:= (\xi_1, \xi_2, \ldots, \xi_N), \quad \text{and} \quad I_p(x):= x \log \frac{x}{p} + (1-x) \log \frac{1-x}{1-p} \text{ for } x \in [0,1],
	\]
	with the convention $0 \log 0 = 0$ and  the probability measure $\mu_{\bm \xi}:=\otimes_{\upalpha=1}^N \dBer(\xi_\upalpha)$.

	Since, upon setting $N= {n \choose 2}$, identifying each of the possible ${n\choose2}$ edges to $\set{N}$, and letting $f$ to be the number of copies of any subgraph in a graph on $n$ vertices, the upper tail of $N(\uH, \G(n,p))$ falls under the framework described above and it has been of interest whether the $\log$-probability of such events is well approximated by $\Psi_{p,\uH}(\cdot)\coloneqq \Psi_p(N(\uH, \cdot),\cdot)$. As the map $N(\uH, \tu{G})$ linear in the edges of $\tu{G}$ it follows from \cite{BGLZ} that $\Psi_p(N(\uH, \cdot), \delta)/n^2 p^\Delta \log (1/p) \to \theta_{\uH^*}$, as $n \to \infty$, under the same setting as in Theorem \ref{thm:loc-1}, and hence the mean-field approximation is asymptotically tight for the upper tail of any irregular subgraph count in the regime $n^{-1/\Delta} \ll p \ll 1$.  It is natural to seek whether such a mean-field approximation also holds under the setting of Theorem \ref{thm:loc-2}. The following result shows that this is indeed the case for $H=K_{1,r}$. To ease notation, we write $\widehat \Psi_{p,r}(\delta)\coloneqq \Psi_{p, K_{1,r}}(\delta)$.

	\begin{theorem}[Mean-field approximation]\label{thm:mean-field}
		Let $r\geq 2$. Fix $\delta>0$ and $\varepsilon\in(0,1)$. Assume $p \ll 1$ and $n^{r+1}p^{r}\gg (\log n)^{r/(r-1)}$. Then for large $n$,
		\begin{equation*}
			(1-\varepsilon)\widehat\Psi_{p, r}\left(\delta(1-\varepsilon)\right)\leq -\log\P\left(N(K_{1,r}, \G(n,p))\geq(1+\delta)n^{r+1}p^{r}\right) \leq (1+\varepsilon)\widehat\Psi_{p,r}\left(\delta(1+\varepsilon)\right)
		\end{equation*}
	\end{theorem}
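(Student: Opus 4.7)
My plan is to exploit the exact asymptotics $-\log \P(N(K_{1,r}, \G(n,p)) \geq (1+\delta)n^{r+1}p^r) = (1+o(1)) R(\delta)$ furnished by Theorems \ref{thm:loc-1} and \ref{thm:loc-2}, whose regimes together cover exactly the hypothesis of Theorem \ref{thm:mean-field}, paired with matching asymptotic estimates for $\widehat{\Psi}_{p,r}(\delta)$. The principal step is to establish an upper bound $\widehat{\Psi}_{p,r}(\delta) \leq (1+o(1)) R(\delta)$ via explicit test configurations, and here the extremizer switches form across $p \asymp n^{-1/r}$. For $p \gg n^{-1/r}$ (the Theorem \ref{thm:loc-1} regime) I would use ``hard hubs,'' i.e., $M \approx \delta np^r$ distinguished vertices $v_1, \ldots, v_M$ with $\xi_{v_i u} = 1$ for all $u$ and $\xi = p$ elsewhere, giving cost $\approx \delta n^2 p^r \log(1/p)$. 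For $p \lesssim n^{-1/r}$ with $np^r \to 0$ (the Theorem \ref{thm:loc-2} regime) I would use a single ``soft hub'' $v^*$, setting $\xi_{v^* u} = q$ for all $u$ with $q = \delta^{1/r} n^{1/r} p$, giving cost $(n-1) I_p(q) \approx (1/r) \delta^{1/r} n^{1+1/r} p \log n$. In the boundary sub-regime $np^r \to \rho \in (0,\infty)$ I would interpolate via $\lfloor \delta\rho\rfloor$ hard hubs plus a single partial/soft hub calibrated to the remainder $\{\delta\rho\}$, reproducing the piecewise structure in Theorem \ref{thm:loc-2}.

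For the upper bound $-\log \P \leq (1+\varepsilon)\widehat{\Psi}_{p,r}(\delta(1+\varepsilon))$ I would apply the Donsker-Varadhan change-of-measure argument using the configuration ${\bm \xi}$ above tailored to excess $(1+\delta(1+\varepsilon/2))n^{r+1}p^r$. A standard second-moment computation, exploiting edge-independence under $\mu_{\bm \xi}$ and the representation $N(K_{1,r},\cdot) = \sum_v d_v(d_v-1)\cdots(d_v-r+1)$, yields $\mu_{\bm \xi}(N(K_{1,r}, \cdot) \geq (1+\delta)n^{r+1}p^r) \geq 1/2$, after which tilting gives $-\log \P \leq I_p({\bm \xi}) + O(1) \leq (1+\varepsilon)\widehat{\Psi}_{p,r}(\delta(1+\varepsilon))$, the last bound using the divergence and monotonicity of $\widehat{\Psi}_{p,r}$. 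For the converse $-\log \P \geq (1-\varepsilon)\widehat{\Psi}_{p,r}(\delta(1-\varepsilon))$ I would combine the upper bound $\widehat{\Psi}_{p,r}(\delta(1-\varepsilon)) \leq (1+o(1)) R(\delta(1-\varepsilon))$ already established with the black-box asymptotic $-\log \P = (1+o(1)) R(\delta)$ and the strict-monotonicity inequality $(1-\varepsilon) R(\delta(1-\varepsilon)) < R(\delta)$, which is checked directly in each sub-regime from the explicit form of $R$ (including the slightly delicate case $\delta\rho \in \mathbb{Z}$ in the $np^r \to \rho$ sub-regime, where continuity of the fractional part still ensures strict inequality).

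The main technical obstacle will be the second-moment control of $N(K_{1,r}, \cdot)$ under the tilted measure uniformly across the hard/soft hub transition at $p \asymp n^{-1/r}$: in the hard-hub case the dominant $K_{1,r}$'s come from a few deterministic central vertices while fluctuations originate from non-hub stars, whereas in the soft-hub case the binomial fluctuations of the single hub's degree drive the variance, and one must show in each case that $\textup{Var}_{\mu_{\bm \xi}}(N(K_{1,r},\cdot)) = o(\E_{\mu_{\bm \xi}}[N(K_{1,r},\cdot)]^2)$. A secondary bookkeeping issue is the interpolation at $p \asymp n^{-1/r}$ where the leading rate switches between the $\log(1/p)$ and $\log n$ forms (agreeing only up to a factor of $r$), together with the integrality-driven piecewise structure in the $np^r \to \rho$ sub-regime; the $\varepsilon$-slack in the statement is precisely what absorbs the resulting $o(1)$ discrepancies through the continuity of $R$ in $\delta$.
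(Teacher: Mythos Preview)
Your plan for the lower bound $-\log\P\ge(1-\varepsilon)\widehat\Psi_{p,r}(\delta(1-\varepsilon))$ is fine and is essentially what the paper does: feed an explicit test configuration into the variational problem to get $\widehat\Psi_{p,r}(\delta(1-\varepsilon))\le(1+o(1))R(\delta(1-\varepsilon))$, and combine this with the asymptotic $-\log\P=(1+o(1))R(\delta)$ supplied by Theorems~\ref{thm:loc-1} and~\ref{thm:loc-2}.

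The gap is in the other direction. Your tilting argument with the explicit test configuration $\bm\xi$ yields $-\log\P\le I_p(\bm\xi)+O(1)$, but the step ``$I_p(\bm\xi)+O(1)\le(1+\varepsilon)\widehat\Psi_{p,r}(\delta(1+\varepsilon))$'' requires a \emph{lower} bound on $\widehat\Psi_{p,r}(\delta(1+\varepsilon))$, whereas everything you have established about $\widehat\Psi_{p,r}$ is an \emph{upper} bound (test configurations being feasible). Monotonicity of $\widehat\Psi_{p,r}$ in $\delta$ and the divergence $\widehat\Psi_{p,r}\to\infty$ do not help here: you would need to know that your test $\bm\xi$ is a near-minimizer, i.e.\ that $\widehat\Psi_{p,r}(\delta')\ge(1-o(1))R(\delta')$. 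For $p\gg n^{-1/r}$ this is the content of \cite{BGLZ}, but for $p\lesssim n^{-1/r}$ it is a new variational statement (the paper itself highlights that the optimizer becomes non-planted in this range), and proving it from below is of the same order of difficulty as the result you are after.

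The paper sidesteps this entirely. Its key step is the variance estimate \eqref{eq:var-bd}, proved \emph{uniformly} over all $\bm\xi$ satisfying the constraint $\E_{\mu_{\bm\xi}}[N(K_{1,r},\G_n)]\ge(1+\delta(1+\varepsilon))n^{r+1}p^r$, not just over the explicit test configurations. This lets one tilt at an \emph{abstract} near-minimizer $\bm\xi^*$ with $I_p(\bm\xi^*)\le(1+\varepsilon/2)\widehat\Psi_{p,r}(\delta(1+\varepsilon))$; the uniform variance bound then guarantees $\mu_{\bm\xi^*}(\UT)$ is bounded below, and the change of measure gives $-\log\P\le(1+\varepsilon)\widehat\Psi_{p,r}(\delta(1+\varepsilon))$ directly, without ever needing to identify the optimizer or solve the variational problem from below. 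The uniform bound is obtained via the Efron--Stein inequality and the elementary observation that, for any $\bm\xi$, the vertex of largest mean degree $\widehat{\tu D}$ already contributes $(\widehat{\tu D}-r)^r$ to $\E_{\mu_{\bm\xi}}[N(K_{1,r},\G_n)]$, which dominates the $\widehat{\tu D}^{r-1}$ appearing in the variance expansion. Your proposed second-moment computation for the specific hub configurations is a special case of this, but the specificity is exactly what breaks the argument.
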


	In light of Theorem \ref{thm:mean-field} we make the following plausible conjecture.
	\begin{conjecture}
		For any strictly balanced graph $\uH$, $\varepsilon \in (0,1)$, $\delta >0$, and $p$ such that $p \ll n^{-1/\Delta}$ and $n^{v_{\uH}} p^{e_{\uH}} \gg (\log n)^{\alpha_\uH^*/(\alpha_\uH^*-1)}$, and for all large $n$
		\[
		(1-\varepsilon)\Psi_{p, \uH}\left(\delta(1-\varepsilon)\right)\leq -\log\P\left(N(\uH, \G(n,p))\geq(1+\delta)n^{v_{\uH}}p^{e_{\uH}}\right) \leq (1+\varepsilon)\Psi_{p,\uH}\left(\delta(1+\varepsilon)\right).
		\]
	\end{conjecture}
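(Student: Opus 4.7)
The plan is to extend the proof scheme of Theorem~\ref{thm:mean-field} from $\uH=K_{1,r}$ to a general strictly balanced irregular graph $\uH$, by establishing matching two-sided bounds on $-\log\P(N(\uH,\G(n,p))\ge(1+\delta)n^{v_\uH}p^{e_\uH})$ in terms of $\Psi_p(N(\uH,\cdot),\cdot)$.

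The upper bound on $-\log\P$ (the ``easy'' planting direction) should go as follows. Given $\eta>0$, pick a near-optimal ${\bm \xi}^\ast\in[0,1]^N$ for the variational problem $\Psi_p(N(\uH,\cdot),\delta(1+\varepsilon/2))$, so that $\E_{\mu_{{\bm\xi}^\ast}}[N(\uH,\cdot)]\ge(1+\delta(1+\varepsilon/2))n^{v_\uH}p^{e_\uH}$ and $I_p({\bm\xi}^\ast)\le (1+\eta)\Psi_p(N(\uH,\cdot),\delta(1+\varepsilon/2))$. A standard tilting/change-of-measure argument then yields
\[
-\log\P\bigl(N(\uH,\G(n,p))\ge(1+\delta)n^{v_\uH}p^{e_\uH}\bigr)\le I_p({\bm\xi}^\ast) + o\bigl(I_p({\bm\xi}^\ast)\bigr),
\]
provided $\mu_{{\bm\xi}^\ast}(N(\uH,\cdot)\ge(1+\delta)n^{v_\uH}p^{e_\uH})\to 1$. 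The latter concentration under the tilted measure $\mu_{{\bm\xi}^\ast}$ should follow from a Janson/FKG-type bound, with strict balance of $\uH$ (combined with the hypothesis $n^{v_\uH}p^{e_\uH}\gg(\log n)^{\alpha_\uH^\ast/(\alpha_\uH^\ast-1)}$) guaranteeing that any proper subgraph of $\uH$ contributes negligibly.

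The lower bound on $-\log\P$ is considerably harder, and I would attempt it by adapting the seed-sieve decomposition of \cite{HMS} used also in \cite{ABRB} and in the proof of Theorem~\ref{thm:loc-2}. One exposes edges of $\G(n,p)$ in rounds to identify a random seed $\mathbf{S}\subset E(\G(n,p))$ with the property that, conditionally on $\mathbf{S}$, the excess copies of $\uH$ responsible for the upper tail are almost surely rooted at $\mathbf{S}$. A union bound over seeds then translates the upper tail event into a deterministic optimization over edge weights which, upon relaxation to $[0,1]^N$, is essentially $\Psi_p(N(\uH,\cdot),\delta(1-\varepsilon/2))$. For a general irregular $\uH$ the relevant seeds should interpolate between planted copies of the induced subgraph $\uH^\ast$ on maximum-degree vertices (as in Theorem~\ref{thm:loc-1}) and planted ``hubs'' of abnormally high degree (as for $K_{1,r}$ in Theorem~\ref{thm:loc-2}), with both types of structures present simultaneously.

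The main obstacle is identifying the right unified seed family for a general irregular $\uH$ and proving a sufficiently quantitative localization lemma. For $K_{1,r}$ only hubs matter and the variational problem reduces to an i.i.d.\ Binomial computation; for a general $\uH$, the near-optimizers of $\Psi_p(N(\uH,\cdot),\cdot)$ are expected to be mixtures of planted hubs at the $\Delta$-degree vertices and planted substructures on the lower-degree vertices of $\uH$, with a non-trivial independence-polynomial-type coupling between the two. Characterizing these optimizers and bounding the number of near-extremal seed configurations with enough precision to union-bound over them --- while simultaneously verifying that the assumption $n^{v_\uH}p^{e_\uH}\gg(\log n)^{\alpha_\uH^\ast/(\alpha_\uH^\ast-1)}$ correctly separates the mean-field regime from the Poisson regime of Theorem~\ref{thm:poi} --- constitutes the bulk of the technical work.
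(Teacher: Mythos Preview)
The statement you are addressing is a \emph{conjecture} in the paper, not a theorem: the authors explicitly leave it open and provide no proof. So there is no ``paper's own proof'' to compare against.

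Your proposal is honest about this --- it is a strategy outline, not a proof --- and the overall architecture you sketch (tilting plus concentration for the upper bound on $-\log\P$; a seed/core decomposition followed by an entropy bound for the lower bound) is indeed the natural extrapolation of the $K_{1,r}$ argument. But you correctly identify, and do not resolve, the genuine obstruction: for a general irregular $\uH$ in the regime $p\ll n^{-1/\Delta}$ one does not know the structure of near-optimizers of $\Psi_{p}(N(\uH,\cdot),\cdot)$, and without that one cannot design the seed family, prove the analogue of Lemma~\ref{product of degreess} or Lemma~\ref{s2: improved geb }, or carry out the counting of cores. The $K_{1,r}$ proof works precisely because $N(K_{1,r},\tu{G})$ is a function of the degree sequence alone, which forces the optimizer to be a union of hubs and reduces both the variational problem and the core-counting to one-dimensional computations; no such reduction is available for general $\uH$. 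Your proposal therefore remains a plausible plan rather than a proof, which is consistent with the paper's decision to state the result as a conjecture.
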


	\subsection*{Emergence of non-planted optimizers} For $n^{-1/\Delta} \ll p \ll 1$ it was shown in \cite{BGLZ} that $\Psi_{p,\uH}$ is optimized by some `planted' hub of appropriate size, for any connected irregular graph $\uH$. Namely, the optimizer ${\bm \xi}^* =\{\xi_{i,j}^*\}_{i < j \in \set{n}}$ is of the form $\xi_{i,j}^* \in \{p,1\}$ for all $i < j \in \set{n}$. In contrast, in the regime $p \ll n^{-1/r}$ and $n^{r+1}p^r \gg (\log n)^{r/(r-1)}$ the optimizer $\widehat {\bm \xi}$ of $\widehat \Psi_{p,r}$ is a non-planted one, meaning that we no longer have $\widehat \xi_{i,j} \in \{p,1\}$ (see \eqref{eq:hat-xi}). This is due to the fact that in the above regime of $p$ the upper tail large deviations of $N(K_{1,r}, \G(n,p))$ is primarily due to the existence of a large degree (as evidenced through Theorem \ref{cond_struc:star loc-2}). Although this latter event determines the number of edges in the one-neighborhood of a vertex, the neighborhood is still random, and thus the optimizer turns out to be `non-planted'.

	\subsection{Notational conventions}\label{sec:notation}
	%For $n\in \N$, we use $\set{n}$ to denote the set $\{1,2, \ldots, n\}$. 
	For any two sequences of positive real numbers $\{a_{n}\}$ and $\{b_{n}\}$, we write $b_{n}=o(a_{n})$, $a_n\ll b_{n}$, and $b_{n}\gg a_{n}$ to denote $\lim_{n\rightarrow\infty } a_{n}/b_{n} = 0 $. We use $a_{n} \lesssim b_{n}$ and $b_n \gtrsim a_n$ to denote $\limsup_{n\rightarrow \infty} a_{n}/b_{n}<\infty$.  %and $a_{n}\gtrsim b_{n}$ to denote $\liminf_{n\rightarrow \infty} a_{n}/b_{n}>0$. 
	We write $a_{n}\asymp b_{n}$ or $a_{n}=\Theta(b_{n})$ if $a_{n}\lesssim b_{n}$ and $a_n \gtrsim b_{n}$. For $x \in \R$ we use the standard notation $\lfloor x \rfloor$, $\lceil x \rceil$, and $\{x\}$ to denote the largest integer smaller than or equal to $x$, the smallest integer greater than or equal to $x$, and the fractional part of $x$, respectively.

	For a graph $\tu{G}$ we write $V(\tu{G})$ and $E(\tu{G})$ to denote its vertex set and edge set respectively. We will use $v(\tu{G}) = |V(\tu{G})|$ and $e(\tu{G}) = |E(\tu{G})|$, where $|\cdot|$ is the cardinality of a set. For typographic reasons we also use $v_{\tu{G}}$ and $e_{\tu{G}}$ instead of $v(\tu{G})$ and $e(\tu{G})$. 
	%denote the number of vertices and edges of the graph $\tu{G}$ 
	%respectively. We will use $\text{deg}_{\tu{G}}(v)$ to denote the degree of a vertex $v$ in the graph $\tu{G}$.  The notation $K_{n}$ will be used to denote the complete graph on $n$ vertices with the canonical labelling $\set{n}$. Next we define the notion of labelled copies of a given graph in another graph.
	%\begin{definition_un}
	%    Given graphs $\tu{G}$ and $\uH$ we write $N(\uH, \tu{G})$ to denote the number of labelled copies of $\uH$ in $\tu{G}$. That is,
	%    \begin{equation}
		%    \label{number of copies definition}
		%        N(\uH,\tu{G})\coloneqq\sum_{\varphi:V(\uH)\rightarrow V(\tu{G})}\prod_{(x,y)\in E(\uH)}a^{\tu{G}}_{\varphi(x), \varphi(y)},
		%    \end{equation}
	%    where the sum is over all injective maps $\varphi$ from $V(\uH)$ to $V(\tu{G})$, and $(a^{\tu{G}}_{i,j})_{i,j\in V(\tu{G})}$ is the adjacency matrix of the graph.
	%\end{definition_un}

	%    For any graph $\uH$ we have $\E[N(\uH, \G(n,p))] = (1+o(1))n^{v_{\uH}}p^{e_{\uH}}$. Thus the upper tail event is given by
	%    
	%    \begin{equation*}
		%        \UTH\coloneqq\{N(\uH, \G(n,p))\geq (1+\delta)n^{v_{\uH}}p^{e_{\uH}}\}.
		%    \end{equation*}

	\subsection*{Acknowledgements} Preliminary versions of Theorems \ref{thm:loc-1}, \ref{thm:poi}, and \ref{thm:loc-2} can be found on SK's
	\href{https://sites.google.com/icts.res.in/shaibalkarmakar/home}{webpage}. It has come to our recent notice that some of the results in this paper have been independently obtained in \cite{AS25} and in the ongoing work \cite{CAHMS}. We thank Wojciech Samotij for communicating the latter. We also thank Lutz Warnke for helpful comments. Research was partially supported by DAE Project no.~RTI4001 via ICTS. Research of AB was also partially supported by the Infosys Foundation via the Infosys-Chandrashekharan Virtual Centre for Random Geometry.

	%\begin{remark} Preliminary versions of Theorems \ref{thm:loc-1}, \ref{thm:poi}, and \ref{thm:loc-2} can be found on SK's %were obtained as parts of semester projects by SK advised by AB (brief descriptions available on SK's 
	%\href{https://sites.google.com/icts.res.in/shaibalkarmakar/home}{webpage}. Some of the results obtained in this paper have been obtained independently in the upcoming work \cite{CAHMS}. We thank Wojciech Samotij for communicating this to us. 
	%
	%After noticing the arXiv posting \cite{AS25} we chose to write up those results together with Theorem \ref{thm:mean-field} 
	%%postponing the generalizations of Theorems \ref{thm:loc-2} and \ref{thm:mean-field} to a later work. 
	%%
	%%
	%\end{remark}

	\section{Proofs of Theorems \ref{thm:loc-1} and \ref{cond_struc:loc-1}}
	%\noindent 
	We begin with the proof of Theorem \ref{thm:loc-1}. It requires a few notation. For any graph $\tu{G}\subseteq K_{n}$, we write 
	\begin{equation*}
		\E_{\tu{G}}\left[N(\uH, \G(n,p))\right]\coloneqq \E\left[N(\uH, \G(n,p))\ |\ \tu{G}\subseteq \G(n,p)\right].
	\end{equation*}
	The following optimization problem will be useful to prove Theorem \ref{thm:loc-1}:
	\begin{equation*}
		\Phi_{\uH}(\delta) \coloneqq \Phi_{p,\uH}(\delta) \coloneqq \inf\left\{e(\textup{G})\log(1/p):\ \textup{G}\subseteq K_n,\ \E_{\textup{G}}[N(\uH, \G(n,p))] \geq (1+\delta)\E[N(\uH, \G(n,p))]\right\}, \delta \geq 0.
	\end{equation*}
	%As already mentioned above, 
	Since the optimizer of $\Psi_{p,\uH}$ is planted it further follows from \cite{BGLZ}  that %Bhattacharya, Ganguly, Lubetzky and Zhao  solves a variational problem which is asymptotically equivalent to the above optimization problem in the regime $n^{-1/\Delta}\ll p\ll 1$. Their results translate to,
	\begin{equation}
		\label{limit:variational problem}
		\lim_{n\rightarrow \infty}\frac{\Phi_{\uH}(\delta)}{n^{2}p^{\Delta}\log(1/p)} = \theta_{\uH^{*}},
	\end{equation}
	for fixed $\delta>0$ and $n^{-1/\Delta} \ll p \ll 1$.
	We will also use a technical result from \cite{HMS}, which we restate in our graph setting:
	\begin{theorem}[{\cite[Theorem~3.1]{HMS}}]
		\label{HMS:technical result}
		For every graph $\uH$ and all positive real number $\varepsilon$ and $\delta$ with $\varepsilon<1/2$, there is a positive constant $K=K(e_{\uH}, \delta, \varepsilon)$ such that the following holds. Let $p\in (0,1-\varepsilon]$ and let $\Phi_{\uH}(\delta-\varepsilon)\geq K\log(1/p)$.
		Denote by $\I^{*}$ the collection of all graphs $\textup{G}\subseteq K_n$ %\cyan{(to be termed as {\em core graphs} for brevity)} 
		satisfying 
		\begin{itemize}
			\item[(C1)] $\E_{\textup{G}}\left[N(\uH, \G(n,p))\right]\geq (1+\delta-\varepsilon)\E\left[N(\uH,\G(n,p) )\right]$,
			\item[(C2)] $e(\textup{G})\leq K\cdot \Phi_{\uH}(\delta+\varepsilon)$, 
			
		\end{itemize}
		and 
		\begin{itemize}
			\item[(C3)]$\min_{e\in E(\textup{G})}\left(\mathbb{E}_{\textup{G}}\left[N(\uH, \G(n,p))\right]- \mathbb{E}_{\textup{G}\setminus e}\left[N(\uH,\G(n,p))\right]\right) \geq\mathbb{E}\left[N(\uH, \G(n,p))\right]\big/(K\cdot \Phi_{\uH}(\delta +\varepsilon))$.
		\end{itemize}
		Assume that $\I^{*}$ satisfies the following bound: for every integer $m$, 
		\begin{equation}
			\label{eq:entropic stability HMS}
			|\{\textup{G}\in \I^{*}: e(\textup{G})=m\}|\leq (1/p)^{\varepsilon m/2}.
		\end{equation}
		Then
		\begin{equation}
			(1-\varepsilon)\Phi_{\uH}(\delta-\varepsilon)\leq -\log(\P\left(\UTH\right)\leq (1+\varepsilon)\Phi_{\uH}(\delta+\varepsilon)
		\end{equation}
		Further setting
		\[
		\mathcal{J}^{*}\coloneqq \left\{ \tu{G} \in \mathcal{I}^*: e(\tu{G})\log(1/p)\leq (1+\varepsilon)\Phi_{\uH}(\delta+\varepsilon) \right\},
		\]
		we have		
		%		{\color{red}and, writing $\mathcal{J}^{*}$ for the collection of those $\tu{G}\in \I^{*}$ with $-e(\tu{G})\log(1/p)\leq (1+\varepsilon)\Phi_{\uH}(\delta+\varepsilon)$, 
			\begin{equation}
				\label{cond_struc:step 1}
				\P\left(\UTH \cap \left\{\tu{G}\nsubseteq \G(n,p)\ \forall \, \tu{G}\in \mathcal{J}^{*}\right\}\right) \leq (\P(\UTH))^{1+\varepsilon/16}.
			\end{equation}
		\end{theorem}
		
		\begin{remark}\label{rmk:HMS-improve}
			\cite[Theorem~3.1]{HMS} yields \eqref{cond_struc:step 1} with its RHS replaced by $\varepsilon \P(\UTH)$. However, a careful inspection of its proof reveals that one can indeed strengthen that bound to obtain \eqref{cond_struc:step 1} in its present form. We will rely on this strengthened version to prove Theorems \ref{cond_struc:loc-1} and \ref{thm: cond struc regular graph}(a).
		\end{remark}

		Note that \eqref{limit:variational problem} and Theorem \ref{HMS:technical result} together imply Theorem \ref{thm:loc-1}. Also by \eqref{limit:variational problem}, we have $\Phi_{\uH}(\delta)\geq K\cdot \log(1/p)$, where $K$ is the constant of Theorem \ref{HMS:technical result}. We only need to show \eqref{eq:entropic stability HMS} %(``entropic stability'' of $\I^{*}$) 
		holds in the regime $n^{-1/\Delta}\ll p\ll 1$. Before we proceed we will need a couple of definitions and combinatorial results. 
		\begin{definition}\label{def:Q_H}
			Let $\uH$ be a connected graph with maximum degree $\Delta$. Define %Then we define $Q_{\uH}$ to be the following family of subgraphs of $H$:
			\begin{equation*}
				\begin{aligned}
					Q_{\uH} \coloneqq \{\tu{J}: &\textup{ $\tu{J}$ is a non-empty subgraph of $\uH$ without isolated vertices and } \\ &\textup{admits a bipartition $V(\tu{J})=A\cup B$ such that $\tu{deg}_{\tu{J}}a = \Delta$ for all $a\in A$} \}.
				\end{aligned}
			\end{equation*}
		\end{definition}
		In the anticipation of the use of Definition \ref{def:Q_H} in the proof of Theorem \ref{thm: cond struc regular graph} we have allowed $\tu{H}$ to be regular there. In the rest of this section, unless mentioned otherwise, we will implicitly assume that $\tu{H}$ is irregular.

		\begin{lemma}
			\label{s1:simplelowerbound}
			Let $H$ be a connected, irregular graph with maximum degree $\Delta$. 
			\begin{enumerate}
				\item[(i)] Let $\tu{J}$ be a non-empty subgraph of $H$ without isolated vertices. Then $			\Delta(v_{\tu{J}}-\alpha^{*}_{\tu{J}})-e_{\tu{J}}\geq 0$,
				%\end{equation*}
				and the equality holds if and only if  $\tu{J}\in Q_{\uH}$.
				\item[(ii)] Fix any $\tu{J} \in Q_{\uH}$. Let $(a,b) \in E(\tu{J})$ for some $a \in A$ and $b \in B$. Define $\widehat{\tu{J}}$ to be the subgraph obtained from $\tu{J}$ by deleting all edges incident to $\{a,b\}$. Then $\alpha^*_{\widehat{\tu{J}}}= |B|-1$.
			\end{enumerate}
		\end{lemma}
		The proof of Lemma \ref{s1:simplelowerbound}(i) uses an argument similar to that of \cite[Lemma~5.3]{HMS}. The proof of Lemma \ref{s1:simplelowerbound}(ii) uses the fact that the fractional independence number of a bipartite graph must be an integer.
		%For completeness we include its proof in Appendix \ref{more proofs}. 
		\begin{proof}%[Proof of Lemma \ref{s1:simplelowerbound}]
			Assume that $\tu{J}$ is a non-empty subgraph of $\uH$ without isolated vertices. By \cite[Lemma~5.2]{HMS} $\tu{J}$ has a fractional independent set $\alpha:V(\tu{J})\rightarrow [0,1]$ such that $\alpha(v)\in \left\{0,\frac{1}{2}, 1\right\}$ and $\sum_{v\in V(\tu{J})}\alpha(v) = \alpha^{*}_{\tu{J}}$. Then,
			\begin{equation}
				\label{local_ref}
				e_{\tu{J}} \leq \sum_{(u,v) \in E(\tu{J})}(2-\alpha(u)-\alpha(v)) =\sum_{u\in V(\tu{J})}(1-\alpha(u))\text{deg}_{\tu{J}}(u)\leq \Delta\cdot \sum_{u\in V(\tu{J})}(1-\alpha(u)) = \Delta\cdot(v_{\tu{J}}-\alpha^{*}_{\tu{J}}).
			\end{equation}
			Suppose $\Delta(v_{\tu{J}}-\alpha^{*}_{\tu{J}})= e_{\tu{J}}$, then both inequalities in \eqref{local_ref} are equalities. We must have $\alpha(u)+\alpha(v) = 1$ for every $(u,v) \in E(\tu{J})$ and $\text{deg}_{\tu{J}}(u) = \Delta$ whenever $\alpha(u)\neq 1$. Let $A = \alpha^{-1}(\{0\})$, $B = \alpha^{-1}(\{1\})$ and $C = \alpha^{-1}(\{1/2\})$. Every vertex in $A\cup C$ has degree $\Delta$. Each edge of $\tu{J}$ has either both endpoints in $C$ or one endpoint in $A$ and other endpoint in $B$. If $C \neq \phi$, then it induces a $\Delta$-regular subgraph of $\uH$ and $\uH$ being connected with maximum degree $\Delta$ implies that $V(\uH)=C$, which contradicts that $\uH$ is irregular. Thus $C=\phi$ and hence %Now, suppose $C$ is empty, then 
			$\tu{J}\in Q_{\uH}$.
			
			Conversely, if $\tu{J} \in Q_{\uH}$ set %admits a bipartition $V(\tu{J}) = A\cup B$ with $\text{deg}_{\tu{J}}(v)=\Delta$ for all $a\in A$, then define 
			$\alpha(a) = 0$ for $a\in A$ and $\alpha(b) = 1$ for $b\in B$. Such an $\alpha$ is fractional independent set in $\tu{J}$ and we have $e_{\tu{J}}= \Delta (v_{\tu{J}} - \bar\alpha)$, where $\bar \alpha \coloneqq \sum_{v} \alpha(v)$. This establishes part (i).
			
			Turning to prove the second part we notice that \eqref{local_ref} continues to hold even if $J$ contains isolated vertices. This, in particular implies that $e_{\widehat{\tu{J}}} \le \Delta (v_{\widehat{\tu{J}}} - \alpha^*_{\widehat{\tu{J}}})$. On the other hand, $e_{\widehat{\tu{J}}} = e_{\tu{J}} - \deg_{\tu{J}}(a) - \deg_{\tu{J}}(b) +1$ and $e_J = \Delta |A|$. Combining these observations, as $a \in A$ and $\deg_{\tu{J}}(b) \le \Delta$, we find that 
			\begin{equation*}
				\alpha^*_{\widehat{\tu{J}}} \le v_{\widehat{\tu{J}}} - e_{\widehat{\tu{J}}}/\Delta = |A|+|B| -2 - \frac{\Delta(|A|-1) - (\deg_{\tu{J}}(b) -1)}{\Delta} < |B|.
			\end{equation*}
			Since $\widehat{\tu{J}}$ is a bipartite graph it follows from \cite[Proposition A.3]{JOR} that $\alpha^*_{\widehat{\tu{J}}}$ must be an integer and thus $\alpha^*_{\widehat{\tu{J}}} \le |B|-1$. On the other hand, setting $\alpha(u)=0$ for $u \in A\setminus \{a\}$ and $\alpha(u)=1$ for $B \setminus \{b\}$ we see that $\alpha$ is a fractional independent set of $\widehat{J}$ for which $\bar \alpha = |B|-1$ proving that $\alpha^*_{\widehat{\tu{J}}}=|B|-1$.
		\end{proof}
		The next two lemmas together essentially show that for any $\tu{G}\in \I^{*}$ has most of its edges satisfy that the sum of degrees of its endpoints is of order almost $n$. For any $e\in E(\tu{G})$, we use the notation $N(\uH, \tu{G},e)$ to denote the number of labelled copies of $\uH$ in $\tu{G}$ that uses the edge $e$. 
		\begin{lemma}
			\label{s1:excluding edges}
			Fix $\cbar>0$. % be fixed constants.
			Let $\tu{G}\subseteq K_{n}$ be such that 
			\begin{itemize}
				\item[(i).] $e(\tu{G})\leq \cbar n^{2}p^{\Delta}\log(1/p)$
			\end{itemize}
			and
			\begin{itemize}
				\item[(ii).] For every $e\in E(\tu{G})$,  
				\[
				\sum_{\phi\neq J\subseteq H}N(\tu{J}, \tu{G}, e)\cdot n^{-v_{\tu{J}}}p^{-e_{\tu{J}}}\geq 1/ \left(2 \cbar n^{2}p^{\Delta}\log(1/p)\right),
				\]
				where the sum is taken over all non-empty subgraph $\tu{J}$ of $\uH$ without any isolated vertices.
			\end{itemize}
			Define $E_{\text{exc}}(\tu{G})$ to be the subset of edges of $\tu{G}$ such that for $e\in E_{\text{exc}}(\tu{G})$
			\begin{equation*}
				\sum_{\tu{J}\in Q_{\uH}}N(\tu{J}, \tu{G}, e)\cdot n^{-v_{\tu{J}}}p^{-e_{\tu{J}}} < 1/\left(4\cbar n^{2}p^{\Delta}\log(1/p)\right).
			\end{equation*}
			Then $|E_{\text{exc}}(\tu{G})|\leq \widehat{C}\left(\log(1/p)\right)^{v_{\uH}+1} \cdot n^{2}p^{\Delta}\cdot p^{\sigma} \eqqcolon m_0$ for some constants $\widehat{C}$ (depending only on $\uH$ and $\cbar$) and $\sigma$ (depending only on $\uH$).
		\end{lemma}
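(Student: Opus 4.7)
The plan is a double-counting argument combined with a Shearer-style entropy upper bound on $N(\tu J, \tu G)$. First, subtracting the defining inequality of $E_{\text{exc}}(\tu G)$ from hypothesis~(ii) shows that for each $e \in E_{\text{exc}}(\tu G)$,
\[
\sum_{\tu J \subseteq \uH,\, \tu J \notin Q_{\uH}} N(\tu J, \tu G, e)\, n^{-v_{\tu J}} p^{-e_{\tu J}} \;\geq\; \frac{1}{4 \cbar\, n^{2} p^{\Delta} \log(1/p)},
\]
where the sum is over non-empty subgraphs of $\uH$ without isolated vertices not lying in $Q_{\uH}$. Since there are at most $2^{e_{\uH}}$ such $\tu J$, pigeonholing over $\tu J$ produces a partition $E_{\text{exc}}(\tu G) = \bigcup_{\tu J \notin Q_{\uH}} E_{\text{exc}, \tu J}$ with $N(\tu J, \tu G, e) \geq c\, n^{v_{\tu J} - 2}\, p^{e_{\tu J} - \Delta}/\log(1/p)$ for every $e \in E_{\text{exc}, \tu J}$ and some $c = c(\uH, \cbar) > 0$. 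It then suffices to bound $|E_{\text{exc}, \tu J}|$ individually for each $\tu J \notin Q_{\uH}$.

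For each such $\tu J$, the identity $\sum_{e \in E(\tu G)} N(\tu J, \tu G, e) = e_{\tu J} \cdot N(\tu J, \tu G)$ yields $|E_{\text{exc}, \tu J}| \leq C_{1}\, N(\tu J, \tu G)\, n^{2 - v_{\tu J}}\, p^{\Delta - e_{\tu J}}\, \log(1/p)$. The crucial input is then an upper bound on $N(\tu J, \tu G)$ that exploits hypothesis~(i). The naive Friedgut--Kahn bound $N(\tu J, \tu G) \leq (2 e(\tu G))^{\alpha^{*}_{\tu J}}$ is not sharp enough here (it produces too large an $n$-exponent for irregular $\tu J$, e.g., stars). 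I will instead use the mixed Kruskal--Katona type bound
\[
N(\tu J, \tu G) \;\leq\; (2 e(\tu G))^{v_{\tu J} - \alpha^{*}_{\tu J}}\, n^{2 \alpha^{*}_{\tu J} - v_{\tu J}},
\]
which follows from fractional Shearer's inequality applied to a uniformly random labeled copy of $\tu J$ in $\tu G$, with the cover consisting of the pairs $\{V(f) : f \in E(\tu J)\}$ weighted by an optimal fractional matching $(x_{f})_{f \in E(\tu J)}$ (so $\sum_{f \ni v} x_{f} \leq 1$ and $\sum_{f} x_{f} = \nu^{*}_{\tu J} = v_{\tu J} - \alpha^{*}_{\tu J}$ by LP duality) together with the singletons $\{\{v\} : v \in V(\tu J)\}$ weighted by $y_{v} := 1 - \sum_{f \ni v} x_{f} \geq 0$. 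The resulting exponents are non-negative because $2 \alpha^{*}_{\tu J} \geq v_{\tu J}$, itself a consequence of $\nu^{*}_{\tu J} \leq v_{\tu J}/2$.

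Substituting hypothesis~(i) and setting $\sigma_{\tu J} := \Delta(v_{\tu J} - \alpha^{*}_{\tu J}) - e_{\tu J}$, a routine algebraic simplification yields $|E_{\text{exc}, \tu J}| \leq C_{\tu J}\, n^{2}\, p^{\Delta + \sigma_{\tu J}}\, (\log(1/p))^{v_{\tu J} - \alpha^{*}_{\tu J} + 1}$. By Lemma~\ref{s1:simplelowerbound}, $\sigma_{\tu J} > 0$ for every $\tu J \notin Q_{\uH}$, so taking $\sigma := \min_{\tu J \notin Q_{\uH}} \sigma_{\tu J} > 0$ (depending only on $\uH$), summing over the finitely many $\tu J \notin Q_{\uH}$, and using $v_{\tu J} - \alpha^{*}_{\tu J} + 1 \leq v_{\uH} + 1$ gives the claimed bound with $\widehat{C}$ depending on $\uH$ and $\cbar$. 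The main obstacle is identifying the right subgraph-counting inequality: the standard Friedgut--Kahn bound undershoots, and one must instead build the Shearer cover from the fractional matching/vertex cover LP of $\tu J$ rather than from the fractional edge cover.
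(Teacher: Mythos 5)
Your proposal is correct and follows essentially the same route as the paper: subtract the defining inequality of $E_{\text{exc}}(\tu{G})$ from hypothesis (ii), double-count via $\sum_{e}N(\tu{J},\tu{G},e)=e_{\tu{J}}N(\tu{J},\tu{G})$, and control $N(\tu{J},\tu{G})$ by the bound $(2e(\tu{G}))^{v_{\tu{J}}-\alpha^{*}_{\tu{J}}}n^{2\alpha^{*}_{\tu{J}}-v_{\tu{J}}}$ together with Lemma \ref{s1:simplelowerbound} to guarantee $\sigma>0$. The only differences are cosmetic: you pigeonhole $E_{\text{exc}}$ into classes $E_{\text{exc},\tu{J}}$ rather than summing over all $\tu{J}\notin Q_{\uH}$ at once, and you re-derive the key counting inequality via a fractional-matching Shearer cover, whereas the paper simply cites \cite[Theorem~5.4]{HMS}, which is the same bound.
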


		\begin{proof}
			Define 
			\[
			\sigma \coloneqq \min\{\Delta(v_{\tu{J}}-\alpha^{*}_{\tu{J}})-e_{\tu{J}}:\ \phi \neq \tu{J}\subseteq \uH \text{ without isolated vertices and $\tu{J}\notin Q_{\uH}$}\}. 
			\]
			By Lemma \ref{s1:simplelowerbound}, $\sigma>0$. By definition of $E_{\text{exc}}(\tu{G})$ we have
			\begin{equation}
				\label{s1:firsteq}
				\frac{|E_{\text{exc}}(\tu{G})|}{4 \cbar n^{2}p^{\Delta}\log(1/p) }\leq\sum_{e\in E_{\text{exc}}(\tu{G})}\sum_{\substack{\phi\neq \tu{J}\subseteq H\\ \tu{J}\notin Q_{\uH}}} \frac{N(\tu{J}, \tu{G}, e)}{n^{v_{\tu{J}}}p^{e_{\tu{J}}}} \leq 2e_{\tu{J}}\sum_{\substack{\phi\neq \tu{J}\subseteq H\\ \tu{J}\notin Q_{\uH}}} \frac{N(\tu{J},\tu{G})}{{n^{v_{\tu{J}}}p^{e_{\tu{J}}}}}.
			\end{equation}
			%    where in the last step we used $\sum_{e\in E(\tu{G})}N(\tu{J},\tu{G},e)\leq 2{e_{\tu{J}}}\cdot N(\tu{J}, \tu{G})$. We now use \cite[Theorem~5.7]{HMS}, where we have
			%    \begin{equation}
				%    \label{s1:global_embedding bound}
				%        N(\tu{J},\tu{G})\leq (2e(\tu{G}))^{v_{\tu{J}}-\alpha^{*}_{\tu{J}}} \cdot n^{2\alpha^{*}_{\tu{J}}-v_{\tu{J}}},
				%    \end{equation}
			%where $\tu{J}$ is any non-empty graph without isolated vertices. 
			Using the upper bound on $e(\tu{G})$ %\leq \cbar n^{2}p^{\Delta}\log(1/p)$,
			along with \cite[Theorem~5.4]{HMS} and the definition of $\sigma$, we get therefore get 
			\begin{equation}
				\label{s1:2nd eq}
				N(\tu{J}, \tu{G})\leq (2\cbar n^{2}p^{\Delta}\log(1/p))^{v_{\tu{J}}-\alpha^{*}_{\tu{J}}}\cdot  n^{2\alpha^{*}_{\tu{J}}-v_{\tu{J}}} \leq (2\cbar \log(1/p))^{v_{\uH}}\cdot  {n^{v_{\tu{J}}}p^{e_{\tu{J}}}} \cdot p^{\sigma} = o(n^{v_{\tu{J}}} p^{e_{\tu{J}}}),
			\end{equation}
			for any non-empty $\tu{J} \subseteq \uH$ without any isolated vertices %$\phi\neq \tu{J}\subseteq \uH,\ 
			such that $\tu{J}\notin Q_{\uH}$. Finally, as $p \ll 1$, upon combining \eqref{s1:firsteq} and \eqref{s1:2nd eq} completes the proof.
		\end{proof}
		\begin{lemma}
			\label{s1:sum of degrees}
			Assume the same setup as in Lemma \ref{s1:excluding edges}. Further assume $p\ll1 $. Then for every edge $e=(u,v)\in E(\tu{G})\setminus E_{\text{exc}}(\tu{G})$, we have
			\begin{equation*}
				\textup{deg}_{\tu{G}}(u)+\textup{deg}_{\tu{G}}(u) \geq \frac{c_{0}}{\left(\log(1/p)\right)^{v_{\uH}}}\cdot n,
			\end{equation*}
			for some positive constant $c_{0}$ (depending only on $\uH$ and $\cbar$).
		\end{lemma}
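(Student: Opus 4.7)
The plan is to bound $N(\tu{J}, \tu{G}, e)$ for each $\tu{J} \in Q_{\uH}$ in terms of $\deg_{\tu{G}}(u)$, $\deg_{\tu{G}}(v)$, and $n$, and then substitute into the hypothesis $e \in E(\tu{G}) \setminus E_{\text{exc}}(\tu{G})$ to extract a lower bound on $D := \deg_{\tu{G}}(u) + \deg_{\tu{G}}(v)$.

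First I would derive the structural bound. The key fact about $\tu{J} \in Q_{\uH}$, with bipartition $A \cup B$ and $\deg_{\tu{J}}(a) = \Delta$ for every $a \in A$, is that $\tu{J}$ is bipartite, so every edge of $\tu{J}$ has an endpoint in $A$ and $e_{\tu{J}} = \Delta |A|$. In any labelled copy of $\tu{J}$ in $\tu{G}$ using $e = (u,v)$, one of $u, v$ is the image of some $a_0 \in A$; say $\phi(a_0) = u$ and $\phi(b_0) = v$ where $(a_0, b_0) \in E(\tu{J})$. The $\Delta$ $\tu{J}$-neighbors of $a_0$ map injectively into $N_{\tu{G}}(u)$ with $b_0 \mapsto v$, contributing at most $\deg_{\tu{G}}(u)^{\Delta - 1}$ choices. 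Each of the remaining $|A|-1$ vertices of $A$ is $\tu{J}$-adjacent to at least one already-placed $B$-vertex; when that placed vertex is $b_0$ it contributes at most $\deg_{\tu{G}}(v)$ choices, otherwise at most $n$. Each remaining $B$-vertex contributes at most $n$. Summing over the choice of $\tu{J}$-edge mapped to $e$ (there are $e_{\tu{J}}$ of them) and the two orientations, and symmetrizing between $u$ and $v$, I would obtain
\[
N(\tu{J}, \tu{G}, e) \;\le\; C_{\uH} \cdot D^{|A|+\Delta-2} \cdot n^{|B|-\Delta}.
\]

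Substituting this into the defining condition of $E(\tu{G}) \setminus E_{\text{exc}}(\tu{G})$ and using $v_{\tu{J}} = |A|+|B|$, $e_{\tu{J}} = \Delta|A|$, the $n^{|B|}$ factors cancel and the hypothesis becomes
\[
\sum_{\tu{J} \in Q_{\uH}} c_{\tu{J}} \, \frac{D^{|A|+\Delta-2}}{n^{|A|+\Delta}\, p^{\Delta |A|}} \;\ge\; \frac{1}{4 \bar C \, n^2 \, p^{\Delta} \log(1/p)}.
\]
Writing $\widehat D = D/n$ and $\alpha = \widehat D/p^{\Delta}$, this rearranges to $\widehat D^{\Delta-1} \sum_{\tu{J}} c_{\tu{J}} \alpha^{|A|-1} \gtrsim 1/\log(1/p)$. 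I would then case-split on $\alpha$: if $\alpha \le 1$, each $\alpha^{|A|-1} \le 1$ and the sum is bounded by a constant depending only on $\uH$, which gives $\widehat D \gtrsim 1/\log(1/p)^{1/(\Delta-1)}$; since $v_{\uH} \ge \Delta+1 > 1/(\Delta-1)$, this comfortably implies $\widehat D \ge c_0/\log(1/p)^{v_{\uH}}$ for large $n$. If $\alpha > 1$, the sum is dominated by the term with $|A|=k_{\max} \le v_{\uH}$, yielding $\widehat D^{k_{\max}+\Delta-2} \gtrsim p^{\Delta(k_{\max}-1)}/\log(1/p)$; combined with the inequality $\widehat D > p^{\Delta}$ available in this case, this again delivers $\widehat D \ge c_0/\log(1/p)^{v_{\uH}}$ after straightforward algebra.

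The main obstacle is producing the combinatorial bound in Step 1 with sharp enough exponents. The naive use of the trivial estimate $\deg_{\tu{G}} \le n$ for unconstrained placements of remaining $B$-vertices may lose precision in the $p$-dependence, and one may need to combine this with the edge-count bound $e(\tu{G}) \le \bar C \, n^2 p^{\Delta} \log(1/p)$ from Lemma~\ref{s1:excluding edges}, or with a Kruskal--Katona-type inequality for the common-neighbor constraints among the $\Delta$ placed neighbors of $\phi(a_0)$. However, the exponent $v_{\uH}$ on $\log(1/p)$ in the conclusion is a fairly loose upper bound (one in fact expects the exponent $1/(\Delta-1)$ to be the truth in the regime where the $K_{1,\Delta}$ term dominates), so even the crude bound described above should, after the case analysis on $\alpha$, be sufficient to close the argument.
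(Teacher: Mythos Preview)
Your proposal has a genuine gap at two places. First, the combinatorial bound $N(\tu{J}, \tu{G}, e) \le C_\uH\, D^{|A|+\Delta-2}\, n^{|B|-\Delta}$ is not valid in general: getting the exponent $|A|-1$ on $D$ from the remaining $A$-vertices requires each of them to be $\tu{J}$-adjacent to the fixed $b_0$, which need not hold. For instance, take $\Delta=3$, $|A|=2$, $|B|=4$ with $a_1\sim b_1,b_2,b_3$ and $a_2\sim b_2,b_3,b_4$; when the distinguished edge is $(a_1,b_1)$ the second $A$-vertex $a_2$ is not adjacent to $b_0=b_1$, so your own description yields only $D^{\Delta-1} n^{|A|+|B|-\Delta-1}=D^2 n^2$, not $D^3 n$. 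Second, even granting your bound, the case $\alpha>1$ does not close. The function $\widehat D^{\Delta-1}\sum_{\tu{J}} c_{\tu{J}}\alpha^{|A_{\tu{J}}|-1}$ is increasing in $\widehat D$, so the constraint gives only $\widehat D\ge \widehat D_*$ with $g(\widehat D_*)=1/\log(1/p)$; but whenever some $\tu{J}\in Q_\uH$ has $|A_{\tu{J}}|\ge 2$, already $\widehat D=p$ gives $\widehat D^{\Delta-1}\alpha^{|A_{\tu{J}}|-1}=p^{(\Delta-1)(2-|A_{\tu{J}}|)}\ge 1\gg 1/\log(1/p)$, so $\widehat D_*$ is at most a fixed power of $p$, far below $c_0/(\log(1/p))^{v_\uH}$. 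The side information $\widehat D>p^\Delta$ does not rescue this.

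The missing ingredient is exactly the edge-count hypothesis you flag only as a possible patch; the paper makes it do all the work. After pigeonholing to a single $\tu{J}\in Q_\uH$ contributing at least $(4|Q_\uH|\cbar n^2 p^\Delta \log(1/p))^{-1}$, the paper applies \cite[Lemma~5.14]{HMS}, which for bipartite $\tu{J}$ with parts $A,B$ gives the estimate
\[
N(\tu{J}, \tu{G}, uv)\ \le\ e_{\tu{J}}\cdot D\cdot (2e(\tu{G}))^{|A|-1}\cdot n^{|B|-|A|-1},
\]
\emph{linear} in $D$. Substituting $e(\tu{G})\le \cbar n^2 p^\Delta \log(1/p)$ and using $e_{\tu{J}}=\Delta|A|$, $v_{\tu{J}}=|A|+|B|$, the powers of $n$ and $p$ cancel exactly, and one reads off $D\ge c\, n/(\log(1/p))^{|A|}\ge c_0\, n/(\log(1/p))^{v_\uH}$ with no case analysis.
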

		\begin{proof}Let $e=(u,v)\in E(\tu{G})\setminus E_{\text{exc}}(\tu{G})$.
			By the definition of $E_{\text{exc}}(\tu{G})$  there exists $ \tu{J}\in Q_{\uH}$ such that
			\begin{equation}
				\label{s1:dp eq1}
				N(\tu{J}, \tu{G}, uv)\cdot n^{-v_{\tu{J}}}p^{-e_{\tu{J}}}\geq %\frac{1}{|Q_{\uH}|}
				\frac{1}{4 |Q_{\uH}| \cbar n^{2}p^{\Delta}\log(1/p)}.
			\end{equation}
			By the definition of $Q_{\uH}$, $\tu{J}$ admits a bipartition $V(\tu{J})=A\cup B$ where $\text{deg}_{\tu{J}}(a) = \Delta$ for $a\in A$.
			Now we use \cite[Lemma~5.8]{HMS}, which gives us
			\begin{equation}
				\label{s1:dp eq2}
				N(\tu{J}, \tu{G},uv)\leq e_{\tu{J}}\cdot \left(\text{deg}_{\tu{G}}(u)+\text{deg}_{\tu{G}}(v)\right)\cdot \left(2e(\tu{G})\right)^{|A|-1}\cdot n^{|B|-|A|-1}.
			\end{equation}
			Using $e(\tu{G})\leq \cbar n^{2}p^{\Delta}\log(1/p)$ in \eqref{s1:dp eq2} and applying it to \eqref{s1:dp eq1}, we get
			\begin{equation*}
				\text{deg}_{\tu{G}}(u)+\text{deg}_{\tu{G}}(v) \geq \frac{1}{2e_{\uH}|Q_{\uH}|(2\cbar)^{v_{\uH}}}\frac{n}{\left(\log(1/p)\right)^{v_{\uH}}}, 
			\end{equation*}
			where we have used that $e_{\tu{J}} = \Delta |A|$, $v_{\tu{J}} = |A|+|B|$ and $|A|\leq v_{\uH}$.
		\end{proof}
		Now, we are ready to prove Theorem \ref{thm:loc-1}. 
		\begin{proof}[Proof of Theorem \ref{thm:loc-1}]
			Recall that we only need to show \eqref{eq:entropic stability HMS} holds in the regime $n^{-1/\Delta}\ll p\ll 1$. 
			By %    We have already established in 
			\eqref{limit:variational problem} that $\Phi_{\uH}(\delta)  \asymp n^{2}p^{\Delta}\log(1/p)$.  
			% (the subscripts of $\Theta$ emphasizes that the constants involved depend only on $\uH$ and $\delta$). 
			Using this and the conditions (C1) and (C2), %we have that 
			there exists $ \cbar\coloneqq \cbar(\uH, \delta, \varepsilon)>0$ such that any $\tu{G}\in \I^{*}$ satisfies
			\begin{equation*}
				m_{\min} \coloneqq\ \frac{1}{\cbar} n^{2}p^{\Delta}\ \leq\ e(\tu{G})\ \leq K\cdot \Phi_{\uH}(\delta+\varepsilon)\ \leq\ \cbar n^{2}p^{\Delta}\log(1/p) \eqqcolon m_{\max}.
			\end{equation*}
			Let $m$ be any integer satisfying $m_{min}\leq m\leq m_{\max}$. Define %$\I^{*}_{m}$ to be the subcollection
			%\begin{equation*}
			$ \I^{*}_{m} \coloneqq \{\tu{G}\in \I^{*}:\ e(\tu{G}) = m\}$.
			%\end{equation*}
			Let $\tu{G}\in \I^{*}_{m}$. Condition (C3) gives us that for every $e\in E(\tu{G})$,
			\begin{equation*}
				\frac{\E[N(\uH,\G(n,p))]}{\cbar n^{2}p^{\Delta}\log(1/p)}\leq \E_{\tu{G}}[N(\uH, \G(n,p))] - \E_{\tu{G}\setminus e}[N(\uH, \G(n,p))] \leq \sum_{\phi\neq \tu{J}\subseteq H} N(\tu{J}, \tu{G},e)\cdot n^{v_{\uH}-v_{\tu{J}}}p^{e_{\uH}-e_{\tu{J}}},
			\end{equation*}
			where the sum ranges over the nonempty subgraphs $\tu{J}$ of $\tu{H}$ without isolated vertices. Since $\E[N(\uH,\G(n,p))] = (1+o(1))n^{v_{\uH}}p^{e_{\uH}}$ we get %, there exists a positive constant $\gamma >0$ such that 
			\begin{equation*}
				\sum_{\phi\neq \tu{J}\subseteq H} \frac{N(\tu{J},\tu{G},e)}{n^{v_{\tu{J}}}p^{e_{\tu{J}}}} \geq \frac{1}{2 \cbar n^2p^{\Delta}\log(1/p)}, \qquad \text{for every $e\in E(\tu{G})$}.
			\end{equation*}
			Therefore, %$\tu{G}$ satisfies the assumptions of 
			both Lemma \ref{s1:excluding edges} and Lemma \ref{s1:sum of degrees} apply to any $\tu{G} \in \I^*_m$. This, in particular, implies that $|E_{\text{exc}}(\tu{G})|\leq m_0$ and %and we also have $p\ll 1$. Therefore, there exists positive constants $\widehat{C}, c_{0}$ (depending only on $\uH,\cbar$ and $\gamma$) and a positive constant $\sigma$ (depending only on $\uH$) and a  subset $E_{\text{exc}}(\tu{G})$ of edges of $\tu{G}$ such that
			%    \begin{itemize}
				%        \item $|E_{\text{exc}}(\tu{G})|\leq \widehat{C}\left(\log(1/p)\right)^{v_{\uH}+1}\cdot n^{2}p^{\Delta}\cdot p^{\sigma}$, and 
				%        \vspace{-0.1cm}\item 
				for every $e=(u,v)\in E(\tu{G})\setminus E_{\text{exc}}(\tu{G})$,
				\begin{equation*}
					\text{deg}_{\tu{G}}(u) + \text{deg}_{\tu{G}}(v) \geq \frac{c_{0}}{\left(\log(1/p)\right)^{v_{H}/2}}\cdot n.
				\end{equation*}
				%\end{itemize}
				Define 
				\begin{equation*}
					B_{*}(\tu{G}) \coloneqq \{v\in V(\tu{G}): \text{deg}_{\tu{G}}(v)\geq np^{\varepsilon}\}.
				\end{equation*}
				We claim that each edge $e=(u,v)\in E(\tu{G})\setminus E_{\text{exc}}(\tu{G})$ has one endpoint in $B_{*}(\tu{G})$. Otherwise, as $p\ll 1$ %gives us 
				\begin{equation*}
					\text{deg}_{\tu{G}}(u)+\text{deg}_{\tu{G}}(v)\leq 2np^{\varepsilon}\ll \frac{c_0}{\left(\log(1/p)\right)^{v_{H}/2}}\cdot n,
				\end{equation*}
				which is a contradiction. Another easy bound that we get is 
				\begin{equation*}
					2m \geq \sum_{v\in B_{*}(\tu{G})}\text{deg}_{\tu{G}}(v) \geq |B_{*}(\tu{G})|\cdot np^{\varepsilon}
				\end{equation*}
				and hence $|B_{*}(\tu{G})|\leq 2mn^{-1}p^{-\varepsilon}$. Now, we give a way to construct such a $\tu{G}\in \I^{*}_{m}$:
				\begin{itemize}
					\item[(1)] Choose some $m_{\text{exc}}\leq m_0$ %\widehat{C}\left(\log(1/p)\right)^{v_{\uH}+1}\cdot n^{2}p^{\Delta}\cdot p^{\sigma}$ 
					and then choose $m_{\text{exc}}$ edges of $K_{n}$ to form $E_{\text{exc}}(\tu{G})$.
					%\vspace{-0.1cm}
					\item[(2)]Choose the set $B_{*}(\tu{G})$ and then choose rest of $m-m_{\text{exc}}$ edges from the set 
					\begin{equation*}
						\mathfrak{B} \coloneqq \{(u,v)\in E(K_{n}):u\in B_{*}\}.
					\end{equation*}
				\end{itemize}
				The number of ways to choose $B_{*}(\tu{G})$ is at most 
				\begin{equation*}
					{n\choose 2mn^{-1}p^{-\varepsilon}}\leq n^{2mn^{-1}p^{-\varepsilon}} \leq p^{-\varepsilon m},
				\end{equation*}
				where we have used $p\geq n^{-1/\Delta}$. We further have $|\mathfrak{B}|\leq |B_{*}|\cdot n \leq 2mp^{-\varepsilon}$. Therefore, using $p \ll 1$ yet again, 
				%Now, we bound the size of $\I^{*}_{m}$,
				\begin{equation}
					\label{s1:main ineq}
					|\I^{*}_{m}|\leq p^{-\varepsilon m}\cdot \sum_{m_{\text{exc}}=0}^{m_{0}}{n^{2}\choose m_{\text{exc}}}\cdot { 2mp^{-\varepsilon}\choose m-m_{\text{exc}}}\leq p^{-3\varepsilon m} \sum_{m_{\text{exc}}=0}^{m_{0}}{n^{2}\choose m_{\text{exc}}}.
				\end{equation}
				%where $m_{0}=\widehat{C}\left(\log(1/p)\right)^{v_{\uH}+1}\cdot n^{2}p^{\Delta}\cdot p^{\sigma}$ and we have used $p\ll1$ to obtain the second inequality.
				Now using the inequality $\sum_{i=0}^{k}{m\choose i}\leq(me/k)^{k}$ and the fact that $m_0 \ll m_{\min} \le m$ we  finally get
				\begin{equation}
					\label{s1:imp 3 eq}
					\sum_{m_{\text{exc}}=0}^{m_{0}}{n^{2}\choose m_{\text{exc}}}\leq \left(\frac{en^{2}}{m_{0}}\right)^{m_{0}}\leq p^{-2\varepsilon m}.
				\end{equation}
				Plugging \eqref{s1:imp 3 eq} in \eqref{s1:main ineq} gives $|\I^{*}_{m}|\leq p^{-5\varepsilon m}$. This completes the proof.
			\end{proof}
			
			We now turn to the prove Theorem \ref{cond_struc:loc-1}. To this end, for every $\varepsilon>0$ define
			\begin{equation*}
				\begin{aligned}
					\tu{Near-min}(\varepsilon) \coloneqq &\{\tu{G}\subseteq K_{n}:\ \tu{G} \in \mathcal{I}^* \text{ and } e(\tu{G})\leq (1+\mathfrak{t}_0 (\varepsilon))\theta_{\uH^{*}} n^{2}p^{\Delta} \},
				\end{aligned}
			\end{equation*}
			where $\mathfrak{t}_0(\varepsilon) = \mathfrak{t}_0(\uH, \delta, \varepsilon)$ is some constant, to be determined below, such that $\lim_{\varepsilon \downarrow 0} \mathfrak{t}_0(\varepsilon) =0$. By \eqref{limit:variational problem} and \eqref{cond_struc:step 1}, and the continuity of the map $\delta \mapsto \theta_{\uH^*}(\delta)$, in the regime $n^{-1/\Delta}\ll p\ll 1$ we get, 
			\begin{equation*}
				\P\left(\left\{\exists \tu{G}\subseteq  \G(n,p): \tu{G}\in\tu{Near-min}(\varepsilon)\right\}\big| \UTH\right)\geq 1-(\P(\UTH))^{\varepsilon/16},
			\end{equation*}
			for some appropriately chosen $\mathfrak{t}_0(\varepsilon)$. Hence, to complete the proof of Theorem \ref{cond_struc:loc-1} it remains to show that for any $\tu{G} \in \tu{Near-min}(\varepsilon)$ there must exist $W \subseteq V(\tu{G})$ such that
			\begin{equation}\label{eq:W-bd}
				\min_{w \in W} \deg_{\tu{G}}(w) \ge (1-\widetilde{\mathfrak{t}}(\varepsilon))n \qquad \text{ and } \qquad e(\tu{G}[W, V(\tu{G})\setminus W]) \geq (\theta_{\uH^*} - \mathfrak{t}(\varepsilon))n^2 p^\Delta,
			\end{equation}
			for some $\mathfrak{t}(\varepsilon)$ and $\widetilde{\mathfrak{t}}(\varepsilon)$ such that $\lim_{\varepsilon \downarrow 0} \max\{\mathfrak{t}(\varepsilon), \widetilde{\mathfrak{t}}(\varepsilon)\} =0$.
			
			A key to this is the following lemma.
			\begin{lemma}
				\label{cond_struc: bipartite lemma}
				Assume $ p\ll 1$. Let $\varepsilon> 0$ and $\tu{G}\subseteq K_{n}$ be such that $e(\tu{G})\lesssim  n^{2}p^{\Delta}$.
				There exists a constant $\widetilde \eta\coloneqq \widetilde \eta(\varepsilon,\uH)>0$ such that for every $\eta \in (0,\widetilde \eta)$ with $U:=\{v\in V(\tu{G}): \tu{deg}_{\tu{G}}(v)\geq \eta n\}$ and $V \coloneqq V(\tu{G})\setminus U$, we have
				\begin{equation*}
					N_{U}(\tu{J}, \tu{G}[U,V]) \geq N(\tu{J},\tu{G})-\varepsilon n^{v_{\tu{J}}}p^{e_{\tu{J}}}, \qquad \forall\ \tu{J}\in Q_{\tu{H}},
				\end{equation*} 
				where %$G[U,V]$ denotes the bipartite subgraph induced by edges of $\tu{G}$ with one ednpoint in $U$ and other endpoint in $B$ and 
				$N_{U}(\tu{J},\tu{G}[U,V])$ denotes the number of labelled copies of $\tu{J}$ in $\tu{G}[U,V]$ such that the vertices of $A$ are mapped to those in $U$. %maximal independent set of $\tu{J}$ is mapped to vertices in $U$.
			\end{lemma}

			Deferring the proof of Lemma \ref{cond_struc: bipartite lemma} to later we now prove Theorem \ref{cond_struc:loc-1}.
			
			\begin{proof}[Proof of Theorem \ref{cond_struc:loc-1}]
				For any $\tu{G} \in \mathcal{I}^*$ we have
				\begin{equation}\label{eq:J-sum}
					(\delta -2\varepsilon) n^{v_{\uH}}p^{e_{\uH}} \leq \E_{\tu{G}}[N(\uH, \G(n,p))]-\E[N(H, \G(n,p))]\leq \sum_{\phi\neq \tu{J}\subseteq \uH} N(\tu{J}, \tu{G})\cdot  n^{v_{\uH}-v_{\tu{J}}} p^{e_{\uH}-e_{\tu{J}}},
				\end{equation}
				where the sum is over nonempty subgraph $\tu{J}$ of $\uH$ without any isolated vertices. By \eqref{s1:2nd eq} and %Similar to as in step \eqref{s1:2nd eq}, the condition on $e(\tu{G})$ and \cite[Theorem~5.4]{HMS} give us $N(\tu{J}, \tu{G})  = o(n^{v_{\tu{J}}}p^{e_{\tu{J}}})$ for $\tu{J}\notin Q_{\uH}$. Therefore, for such $\tu{G}$ we have 
				%\begin{equation}
				%	\label{cond_struc:step 2}
				%\sum_{\tu{J}\in Q_{\uH}} N(\tu{J}, \tu{G}) \cdot n^{-v_{\tu{J}}} p^{-e_{\tu{J}}} \geq \delta -3\varepsilon.
				%\end{equation} 
				%
				%Any $\tu{G}\in \tu{Near-min}(\varepsilon)$ satisfies the hypothesis of 
				Lemma \ref{cond_struc: bipartite lemma} %. Using this in \eqref{cond_struc:step 2}, 
				we further obtain
				\begin{equation}
					\label{cond_struc:step 3}
					\sum_{\tu{J}\in Q_{\uH}} N_{U}(\tu{J}, \tu{G}[U,V])\cdot n^{-v_{\tu{J}}} p^{-e_{\tu{J}}}\geq \delta -4\varepsilon.
				\end{equation}
				%where $U,V,\ N_{U}(\tu{J}, \tu{G}[U,V])$ are as defined in Lemma \ref{cond_struc: bipartite lemma}. 
				%Now, pick any $\tu{J}\in Q_{\uH}$. By defnition $\tu{J}$ admits a bipartition $V(\tu{J})=A\cup B$ such that $\tu{deg}_{\tu{J}}(a) = \Delta$ $\forall a\in A$. Clearly, $A$ is the maximal independent subset of $\tu{J}$. So, $N_{U}(\tu{J}, \tu{G}[U,V])$ is the number of labelled copies of $\tu{J}$ in $\tu{G}$ where $A$ is mapped to $U$ and $B$ is mapped to $V$. We have an upper bound for $N_{U}(\tu{J}, \tu{G}[U,V])$ analogous to \cite[Theorem 5.4]{HMS},
				We claim that
				\begin{equation}
					\label{cond_struc:step 4}
					N_{U}(\tu{J}, \tu{G}[U,V])\leq\ \left(e(G[U,V])\right)^{|A|} \cdot n^{|B|-|A|} = 	\left(e(G[U,V])\right)^{v_{\tu{J}}-\alpha_{\tu{J}}^{*}} \cdot n^{2\alpha_{\tu{J}}^{*}-v_{\tu{J}}}, \qquad \forall \, J \in Q_{\uH},
				\end{equation}
				where we recall the definition of $Q_{\uH}$ from Definition \ref{def:Q_H}. To see \eqref{cond_struc:step 4} let $M$ be a matching of $\tu{J}$ of size $|A|$. Each edge in the matching has atmost $e(\tu{G}[U, V])$ choices (restricted by the fact that $A$ must be mapped to $U$). There are $n^{|B|-|A|}$ many choices for the remaining $|B|-|A|$ vertices in $B$. The equality in \eqref{cond_struc:step 4} follows once we note that $\alpha_{\tu{J}}^{*} = |B|$ for all $J \in Q_{\uH}$. Now using  \eqref{cond_struc:step 4} in \eqref{cond_struc:step 3},
				\begin{equation}
					\label{cond_struc:step 5}
					\delta - 4\varepsilon \leq \sum_{\tu{J}\in Q_{\uH}} \frac{N_{U}(\tu{J}, \tu{G}[U,V])} {n^{v_{\tu{J}}}p^{e_{\tu{J}}}}\leq\sum_{\tu{J}\in Q_{\uH}} \left(\frac{e(G[U,V])}{n^{2}p^{\Delta}} \right)^{v_{\tu{J}}-\alpha_{\tu{J}}^{*}} = P_{\uH^{*}}\left(\frac{e(G[U,V])}{n^{2}p^{\Delta}} \right)-1,
				\end{equation}
				where the last step follows from the fact that there is an one-to-one correspondence between independent sets in $\uH^*$ of size $k$ and graphs $\tu{J} \in Q_\uH$ with $|A|= v_{\tu{J}} - \alpha^*_{\tu{J}}$.
				
				%for any $k \in \set{\lfloor v_{\uH}/2\rfloor}$, the number of $\tu{J}\in Q_{\uH}$ with  $v_{\tu{J}}-\alpha_{\tu{J}}^{*}=k$ is exactly the number of $k$-element independent sets of $\uH^{*}$ (See \cite[Defnition~1.1,1.2]{BGLZ} for definition of $\P_{\uH^{*}}$ and $\uH^{*}$). 
				Further, using $e(\tu{G}[U,V])\leq e(\tu{G})$ and the monotonicity of $P_{\uH^{*}}$, as $\tu{G} \in \tu{Near-min}(\varepsilon)$, we find that %shows that
				\begin{equation}
					\label{cond_struc:step 7}
					P_{\uH^{*}}\left(\frac{e(G[U,V])}{n^{2}p^{\Delta}} \right)%\leq P_{\uH^{*}}\left( (1+\varepsilon)\theta_{\uH^{*}}\right) 
					\leq  1+\delta+\mathfrak{t}_{1}(\varepsilon), 
				\end{equation}
				for some non-negative function $\mathfrak{t}_{1}(\cdot) = \mathfrak{t}_1(\cdot, \uH, \delta)$ such that $\lim_{\varepsilon\downarrow 0} \mathfrak{t}_{1}(\varepsilon)=0$. Since the second inequality in \eqref{cond_struc:step 5} holds term-wise we conclude from \eqref{cond_struc:step 5} and \eqref{cond_struc:step 7}, that 
				\begin{equation} 
					\label{cond_struc:step 8}
					N_{U}(K_{1, \Delta}, \tu{G}[U,V])\geq e(\tu{G}[U,V])\cdot n^{\Delta-1} - (\mathfrak{t}_{1}(\varepsilon)+4\varepsilon) n^{\Delta+1}p^{\Delta},
				\end{equation} 
				as $K_{1,\Delta}\in Q_{\uH}$. Take $\gamma \coloneqq \sqrt{\mathfrak{t}_{1}(\varepsilon)+4\varepsilon}$. Let $W\coloneqq \{v\in U: \tu{deg}_{\tu{G}}(v)\geq (1-\gamma)n\}\subseteq U$ and set $\beta \coloneqq e(\tu{G}[W,V])/ e(\tu{G}[U,V])$. Notice that
				\begin{equation}
					\label{cond_struc:step 9}
					\begin{aligned}
						&N_{U}(K_{1,\Delta}, \tu{G}[U,V]) = N_{W}(K_{1,\Delta}, \tu{G}[W,V]) + N_{U\setminus W}(K_{1,\Delta}, \tu{G}[U\setminus W,V]) \\
						& \leq e(\tu{G}[W,V]) \cdot n^{\Delta-1} + e(\tu{G}[U\setminus W,V])\cdot \left((1-\gamma)n\right)^{\Delta-1}\leq \left(1-\gamma(1-\beta)\right)\cdot e(\tu{G}[U,V]) \cdot n^{\Delta-1},
					\end{aligned}
				\end{equation}
				where the first inequality is obtained by observing that once we fix the image of any one edge of $K_{1,\Delta}$, the remaining $\Delta-1$ many vertices can have atmost $n^{\Delta-1}$ or $((1-\gamma)n)^{\Delta-1}$ choices depending on where the centre vertex of $K_{1,\Delta}$ is mapped to $W$ or $U\setminus W$ respectively. 
				
				To conclude the proof we recall that $\theta_{\uH^{*}}$ is the unique positive solution to $P_{\uH^{*}}(\theta) = (1+\delta)$ and use that  $P_{\uH^{*}}$ is strictly increasing and continuous on $[0,\infty)$ to deduce from  \eqref{cond_struc:step 5} that 
				\begin{equation}
					\label{cond_struc:step 6}
					e\left(\tu{G}[U,V]\right)\geq (\theta_{\uH^{*}}- \mathfrak{t}_2(\varepsilon)) n^{2}p^{\Delta},
				\end{equation} 
				for some non-negative function $\mathfrak{t}_2(\cdot) = \mathfrak{t}_2(\cdot, \uH, \delta)$ such that $\lim_{\varepsilon \downarrow 0} \mathfrak{t}_2(\varepsilon)=0$. 
				Combining the lower and upper bounds in \eqref{cond_struc:step 8} with \eqref{cond_struc:step 9} and using \eqref{cond_struc:step 6} we conclude that 
				\begin{equation}
					e(\tu{G}[W,V])\geq \left(\theta_{\uH^{*}}-\mathfrak{t}_2(\varepsilon)-\sqrt{\mathfrak{t}_{1}(\varepsilon)+4\varepsilon}\right) n^{2}p^{\Delta}.
				\end{equation}
				This yields \eqref{eq:W-bd} completing the proof of the theorem.
			\end{proof}
			
			It remains to prove Lemma \ref{cond_struc: bipartite lemma}. It uses some results on $2$-matchings from \cite[Section 7]{BGLZ}. A $2$-matching of a graph is simply a union of two matchings of it. Notice that for any $J \in Q_{\uH}$ the set $A$ is a vertex cover of $\tu{J}$ (a subset of vertices that intersects every edge) with the property that $\Delta |A| = e_{\tu{J}}$. Therefore, \cite[Lemma 7.1]{BGLZ} is applicable for any $\tu{J} \in Q_{\uH}$. 
			This observation will be used in the proof below.
			
			\begin{proof}[Proof of Lemma \ref{cond_struc: bipartite lemma}]
				Let $\tu{G}\subseteq K_{n}$ be such that $e(\tu{G})\leq C n^{2}p^{\Delta}$ for some constant $C< \infty$. Set $\eta>0$ such that
				\begin{equation}
					\label{eta:defn}
					\eta < \frac{\varepsilon}{2v_{\uH}\cdot (2C)^{2v_{\uH}}}.
				\end{equation}
				Define $U\coloneqq \{v\in V(\tu{G}): \tu{deg}_{\tu{G}}\geq \eta n\}$ and $V\coloneqq V(\tu{G})\setminus U$. Note that $|U| \leq 2e(\tu{G})/(\eta n)$.
				Next for any $\tu{J}\in Q_{\uH}$ we %By definition, $\tu{J}$ has a bipartition $V(\tu{J})=A\cup B$ and $\tu{deg}_{\tu{J}}(a) = \Delta$ for all $a\in A$. $N_{U}(\tu{J}, \tu{G}[U,V])$ is the number of labelled copies of $J$ in $\tu{G}[U,V]$ with all of $A$ mapped to $U$. Next, 
				observe that
				\begin{equation}\label{eq:Psi-Phi}
					N(\tu{J}, \tu{G}) - N_{U}(\tu{J}, \tu{G}[U,V]) = \Phi + \Psi,
				\end{equation}
				where $\Phi$ is the number of labelled copies of $\tu{J}$ in $\tu{G}$ such that at least one edge of $\tu{J}$ is mapped to some edge with both endpoints in $U$ and $\Psi$ is the number of labelled copies where at least one vertex in $A$ is mapped to some vertex in $V$. 
				
				First we bound $\Phi$. Fix $(a,b)\in E(\tu{J})$ such that $a\in A$ and $b\in B$. Set $\Phi_{a,b}$ to be the number of copies of $\tu{J}$ in $\tu{G}$ such that the edge $(a,b)$ is mapped to an edge with both end points in $U$. Let 
				$\widehat{\tu{J}}$ be as in Lemma \ref{s1:simplelowerbound}(ii). Notice that $\Phi_{a,b} \le |U|^{2}\cdot N(\widehat{\tu{J}}, \tu{G})$.
				%the subgraph of $\tu{J}$ obtained by removing edges incident to $a$ or $b$. Then the number of copies of $\tu{J}$ in $\tu{G}$ such that $\{a,b\}$ is mapped to vertices in $U$ is bounded by $|U|^{2}\cdot N(\widehat{\tu{J}}, \tu{G})$.  Now, $\widehat{\tu{J}}$ is still bipartite on $|A|+|B|-2$ many vertices with $\alpha_{\widehat{\tu{J}}}^{*}= |B|-1$. 
				Therefore, by Lemma \ref{s1:simplelowerbound}(ii) and \cite[Theorem~5.4]{HMS}, and using $e(\tu{G})\leq Cn^{2}p^{\Delta}$ and $p\ll1 $, we get
				\begin{equation}
					\label{bound cond Phi}
					\Phi \leq \sum_{(a,b) \in E(\tu{J})} \Phi_{a,b}  \leq e_{\tu{J}}\cdot |U|^{2}\cdot \left(2e(\tu{G})\right)^{|A|-1}\cdot n^{|B|-|A|}\leq \frac{e_{\tu{J}}(2C)^{v_{\tu{H}}}}{\eta^{2}}\cdot n^{v_{\tu{J}}}p^{e_{\tu{J}}}\cdot p^{\Delta} \leq \frac{\varepsilon}{2} \cdot n^{v_{\tu{J}}}p^{e_{\tu{J}}}.
				\end{equation}
				%To bound $\Psi$, we will need the concept of $2$-matching. A $2$-matching $M$ of $\tu{J}$ is a union of two matchings of $\tu{J}$. Then $M$ is a disjoint union of paths and even cycles. We refer the reader to \cite[Section~7]{BGLZ} for more details. 
				We now turn to bound $\Psi$. Fix any $a\in A$. Let $\Psi_{a}$  be the number of labelled copies of $\tu{J}$ in $\tu{G}$ such that the vertex $a$ is mapped to some vertex in $V$. Applying \cite[Lemma~7.1]{BGLZ} to each connected component of $\tu{J}$ we see that there is a $2$-matching $M$ of $\tu{J}$ of size $2|A|$ such that the connected component of $a$ in $M$ is a path. Let $M_{1}, M_{2}, \ldots, M_{s}$ be the components of $M$, where $a\in M_{1}$. Then,
				\begin{equation}
					\label{cond_struc: 2-matching split}
					\Psi_{a}\leq \psi_{a} \cdot \left(\prod_{i=2}^{s} N(M_{s}, \tu{G})\right) \cdot n^{v_{\tu{J}}-\sum_{i=1}^{s} v_{M_{i}}},  
				\end{equation}
				where $\psi_{a}$ is the number of labelled copies of $M_{1}$ with $a$ mapped to a vertex in $V$. 
				
				We claim that we can further guarantee that the components $\{M_s\}_{i=1}^s$ that are paths have odd number of vertices. This follows upon noting that a path with an even number of vertices must have a vertex $u \in A$ with $\deg_{M_i}(u)=1$ and $M_i$ being the connected component of $u$. However, this cannot happen as the degree of any vertex $v\in A$ in any matching of $\tu{J}$ is one. Equipped with this observation, again by an application of \cite[Theorem~5.4]{HMS} we have
				\begin{equation}
					\label{cond_struc: comp bound}
					\prod_{i=2}^{s} N(M_{s}, \tu{G}) \leq \prod_{i=2}^{s} (2e(\tu{G}))^{v_{M_{i}}-\alpha_{M_{i}}^{*}}\cdot n^{2\alpha_{M_{i}}-v_{M_{i}}} \leq (2C)^{v_{\uH}}\cdot n^{\sum_{i=2}^{s}v_{M_{i}}}\cdot p^{(\Delta/2)\sum_{i=2}^{s}e_{M_{i}} },
				\end{equation}  
				where in the last inequality we have used that $2(v_{M_{i}}- \alpha^*_{M_{i}})=e_{M_{i}}$ as $M_{i}$'s are paths with odd number of vertices or cycles with even number of vertices. 
				%Further note that $\sum_{i=1}^{s}e_{M_{i}} = e_{M} = 2|A|$. 
				%
				%With this in mind, we can see that from \eqref{cond_struc: 2-matching split}-\eqref{cond_struc: comp bound} it is enough to show that $\psi_{a}\leq \frac{1}{(2v_{\uH}\cdot (2C)^{v_{\uH}})}\cdot \varepsilon n^{v_{M_{1}}}p^{(\Delta/2) e_{M_{1}}}$. 
				
				Now, let $v_{M_{1}}=2l+1$ for some $l\in \N$. 
				%Then $e_{M_{1}} = 2l$. 
				Let the neighbors of $a$ in $M_{1}$ be $b$ and $c$. Removing the vertices ${a,b,c}$ and all edges incident to them in $M_{1}$ splits $M_{1}$ into at most two disjoint paths $P_{1}$ and $P_{2}$ with $2(k-1)$ and $2(l-k)$ vertices, for some $k\in \set{l}$. So, $\psi_{a}$ can be bounded by appropriately choosing $\{a,b,c\}$ and then choosing $P_1$ and $P_2$. Note that the edge $(a,b)$ has at most $2e(\tu{G})$ choices. Once the image of $a$ is fixed to be some vertex in $V$, $c$ has at most $\eta n$ choices. Therefore, using \cite[Theorem~5.4]{HMS} and $e(\tu{G})\leq Cn^{2}p^{\Delta}$ give us
				\begin{equation}\label{eq:psi-a}
					\psi_{a}\leq 2e(\tu{G})\cdot \eta n\cdot N(P_{1}, \tu{G})\cdot N(P_{2}, \tu{G}) \leq \eta n\cdot (2e(\tu{G}))^{l}\leq \eta (2C)^{v_{\uH}}\cdot  n^{v_{M_1}}p^{\Delta e_{M_1}/2}.
				\end{equation}
				Finally,  upon noting that $\Psi \leq \sum_{a\in A}\Psi_{a}$ and $\sum_{i=1}^{s}e_{M_{i}} = 2|A|= 2e_{\tu{J}}/\Delta$, using the definition of $\eta$ from \eqref{eta:defn}, and combining \eqref{eq:Psi-Phi}-\eqref{eq:psi-a} the proof completes.
			\end{proof}
			
			\begin{remark}\label{rmk:J-copy-bound}
				Observe that for $\tu{H}$ a regular connected graph any $\tu{J} \in {Q}_{\tu{H}}$ is irregular unless $\tu{H}$ is bipartite and $\tu{J}=\tu{H}$. Therefore, for any regular connected $\tu{H}$ Lemma \ref{cond_struc: bipartite lemma} continues to hold for any $\tu{J} \in Q_{\uH}\setminus \{\tu{H}\}$.
			\end{remark}
			
			\section{Proof of Theorems 
				\ref{thm:loc-2} and \ref{cond_struc:star loc-2}}
			%\noindent 
			As it is clear from the context, throughout this section we use $\UT$ instead of $\text{UT}_{K_{1,r}}(\delta)$. We begin with the proof of Theorem \ref{thm:loc-2}.
			%Since the regime $n^{-1/r}\ll p\ll1$ is covered by Theorem \ref{thm:loc-1}, we focus only where $p\in (0,1)$ satisfies $n^{-1-1/r} (\log n)^{1/(r-1)}\ll p$ and $ np^{r} \rightarrow \rho\in [0,\infty)$.
			\begin{proof}[Proof of Theorem \ref{thm:loc-2} (Lower Bound)]
				Using binomial tail bounds one can show that the $\log$-probability of the event that the degree of vertex $n$ is greater than $(1+\varepsilon/4)\left\{\delta np^{r}\right\}^{1/r}\cdot n$ is lower bounded by $-(1+\varepsilon/2) \left\{\delta np^{r}\right\}^{1/r}n \log n/r$, for all large $n$. On the other hand, it is straightforward to note that the $\log$-probability of the event that $K_{(1+\varepsilon/4)\lfloor \delta np^r \rfloor, n-1- (1+\varepsilon/4)\lfloor \delta np^r \rfloor} \subseteq \G(n,p)$ is bounded below by $-(1+\varepsilon/2) \lfloor \delta \rho \rfloor n \log n/r$. Using a variance bound and Chebychev's inequality one further finds that the number of copies of $K_{1,r}$ in the rest of graph is at least $(1-\varepsilon\delta/8) n^{r+1}p^r$ with probability bounded away from zero. Upon combining the last three observations we indeed get the lower bound in the regime of $p\in (0,1)$ satisfying $n^{r+1}p^{r}\gg1$ and $np^{r}\rightarrow \rho\in [0,\infty)$,
				\begin{equation}
					\label{s2:Lower bound star}
					\log \P(\UT)\geq \begin{cases}-(1+\varepsilon)\delta^{1/r}\frac{1}{r}n^{1+1/r}p\log n\quad &\text{if $\rho=0$}\\
						-(1+\varepsilon)\left(\left\lfloor\delta \rho\right\rfloor +\left\{\delta \rho\right\}^{1/r}\right)\frac{1}{r} n\log n &\text{if $\rho\in (0,\infty)$}.
					\end{cases}
				\end{equation}
				%The strategy is to plant a ``hub'' of order $\lfloor \delta np^{r}\rfloor +1$ in $\G(n,p)$ which will give us the extra $\delta n^{r+1}p^{r}$ many extra copies of $K_{1,r}$ with good probability and then show that the rest of $\G(n,p)$ contains approximately $n^{r+1}p^{r}$ many copies of $K_{1,,r}$. We describe the ``hub'' as follows: let $A$ be a set of $\lfloor \delta np^{r}\rfloor+1$ many vertices of $\G(n,p)$. Pick one vertex from $A$, say $a$ and connect it to $\left\{\delta np^{r}\right\}^{1/r}\cdot n$ many vertices of $\G(n,p)$. Finally connect each vertex in $A\setminus\{a\}$ to every vertex outside $A$. As detailed out in \cite[Section 5]{AS25}, we have 
				We skip further details (cf.~\cite[Section 5]{AS25}).
			\end{proof}
			%To show the upper bound we present a different approach which
			%is similar to the work done in \cite{HMS} and \cite{ABRB}. We begin by defining pre-seed graphs. 
			We now move to the proof of the upper bound. This requires some definitions and preparatory results.
			\begin{definition}[Pre-seed graph] Let $\varepsilon>0$ be sufficiently small. Let $\cbar\coloneqq\cbar(\varepsilon, \delta)$ be a sufficiently large constant. A graph $\tu{G}\subseteq K_n$ is said to be a pre-seed graph if the following holds:
				\begin{itemize}
					\item[(PS1)]$\E_{\tu{G}}[N(K_{1,r}, \G(n,p))]\geq (1+\delta(1-\varepsilon))n^{r+1}p^{r}.$ 
					\item[(PS2)] $e(\tu{G})\leq \cbar n^{1+1/r}p\log(1/p)$.
				\end{itemize}
			\end{definition}
			We first show that the probability of $\UT$ is bounded by the existence of pre-seed graphs in $\G(n,p)$. More precisely,
			\begin{lemma}
				\label{upper bound preseed}
				Assume $n^{r+1}p^{r}\gg 1$ and $n^{1/r}p\rightarrow \rho\in [0,\infty)$.
				For large enough $n$, 
				\begin{equation*}
					\P\left(\text{\em{UT}}(\delta)\right) \leq (1+\varepsilon)(\P\left(\G(n,p)\textup{ contains a pre-seed graph}\right).
				\end{equation*}
			\end{lemma}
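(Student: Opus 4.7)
The plan is to define a deterministic greedy ``core extraction'' map $\Xi$ sending a graph $G \subseteq K_n$ to a subgraph $\Xi(G) \subseteq G$ with the property that $\Xi(G)$ always satisfies (PS2), and that on a large part of the event $\UT$ the output $\Xi(\G(n,p))$ is actually a pre-seed (so $\G(n,p)$ contains a pre-seed). Since the extraction is deterministic, controlling the ``failure'' event---where $\Xi$ fails to produce a pre-seed even though $\UT$ holds---yields the $(1+\varepsilon)$ slack in the bound. The argument follows the template of \cite{HMS, ABRB}, specialized to the star graph $K_{1,r}$; the substantial simplification compared to a general $\uH$ is that $N(K_{1,r}, G) = \sum_v (\deg_G(v))_r$ depends only on the degree sequence of $G$.

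\textbf{Greedy extraction and (PS2).} Initialize $\Xi_0 = \emptyset$. At step $k\ge 0$, append any edge $e \in E(G) \setminus E(\Xi_k)$ whose addition raises the conditional expectation by a large amount,
\[
\Delta_e(\Xi_k) := \E_{\Xi_k \cup \{e\}}[N(K_{1,r}, \G(n,p))] - \E_{\Xi_k}[N(K_{1,r}, \G(n,p))] \ge \tau,
\]
where $\tau := n^{r+1}p^r / (\widetilde C\, n^{1+1/r} p \log(1/p))$ for a suitably large constant $\widetilde C = \widetilde C(\delta, \varepsilon, r)$. Halt either in the \emph{success} branch, when $\E_{\Xi_k}[N(K_{1,r},\G(n,p))] \ge (1+\delta(1-\varepsilon))n^{r+1}p^r$, or in the \emph{failure} branch, when no admissible edge remains. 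The Pochhammer identity $(x+1)_r - (x)_r = r(x)_{r-1}$ produces the explicit formula
\[
\Delta_e(\Xi_k) = (1-p)\, r\, \bigl(\E[(\deg_{\Xi_k}(u)+B_u)_{r-1}] + \E[(\deg_{\Xi_k}(v)+B_v)_{r-1}]\bigr)
\]
for $e=uv$, with independent $B_w \sim \tu{Bin}(n-2-\deg_{\Xi_k}(w), p)$. Telescoping $\Delta_e \ge \tau$ over the accepted edges yields $|E(\Xi(G))| \le (1+\delta)\widetilde C\, n^{1+1/r} p \log(1/p)$, so taking $\cbar \ge (1+\delta)\widetilde C$ in the pre-seed definition secures (PS2) unconditionally; on the success branch (PS1) also holds by construction, so $\Xi(\G(n,p))$ is a pre-seed contained in $\G(n,p)$.

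\textbf{Controlling the failure event.} Let $F := \{\UT\} \cap \{\text{failure branch}\}$. On $F$, the output $G^* := \Xi(\G(n,p))$ is \emph{saturated}: no edge $e \in E(\G(n,p)) \setminus E(G^*)$ satisfies $\Delta_e(G^*) \ge \tau$. Union-bounding over realizations of $G^*$ and conditioning on $G^* \subseteq \G(n,p)$ (so that the remaining edges of $\G(n,p) \setminus G^*$ are i.i.d.~$\dBer(p)$), the event $F \cap \{\Xi(\G(n,p)) = G^*\}$ forces
\[
N(K_{1,r}, \G(n,p)) - \E_{G^*}[N(K_{1,r}, \G(n,p))] \ge \varepsilon \delta\, n^{r+1}p^r,
\]
together with the constraint that no admissible edge (relative to $G^*$) is present. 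Viewing $N(K_{1,r}, \G(n,p))$ as a degree-$r$ multilinear polynomial $P_{G^*}$ in the free edge indicators, the saturation bound precisely caps its first-order discrete derivatives uniformly by $\tau$, while the reduction $P_{G^*} = \sum_v (\deg(v))_r$ keeps the variance tractable. A polynomial concentration estimate (Chebyshev, or a Janson--Ruci\'nski-type bound) combined with a union bound over admissible saturated $G^*$ (whose edge counts are at most $\cbar n^{1+1/r}p\log(1/p)$ by (PS2)) then yields $\P(F) \le \tfrac{\varepsilon}{1+\varepsilon}\P(\UT)$ for $n$ large. Combining with the success branch gives $\P(\UT) \le \P(\G(n,p) \text{ contains a pre-seed}) + \tfrac{\varepsilon}{1+\varepsilon}\P(\UT)$, which rearranges to the claimed inequality.

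\textbf{Main obstacle.} The crux is the concentration step in the failure analysis: in the sparse regime $np^r = O(1)$, the count $N(K_{1,r}, \G(n,p))$ is not automatically concentrated around its conditional mean on polynomial scales without further input. One must leverage the per-edge gradient cap coming from the greedy halting condition---which bounds the sensitivity of $P_{G^*}$ to each free edge by $\tau$---together with the star-count reduction to $\sum_v (\deg(v))_r$, which both makes the variance bookkeeping tractable and reduces the task to controlling the upper tail of vertex degrees outside the extracted core $G^*$.
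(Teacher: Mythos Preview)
Your greedy extraction is close in spirit to what underlies the paper's argument, but the failure-branch analysis has a real gap. You need $\P(F)\le \tfrac{\varepsilon}{1+\varepsilon}\P(\UT)$, and $\P(\UT)$ is itself exponentially small in this regime---of order $\exp\bigl(-c\,n^{1+1/r}p\log n\bigr)$. Neither Chebyshev nor a Janson--Ruci\'nski-type inequality delivers decay at that scale for the conditional overshoot $N(K_{1,r},\G(n,p))-\E_{G^*}[\,\cdot\,]\ge \varepsilon\delta\,n^{r+1}p^r$: Chebyshev is polynomial, and the standard polynomial-concentration bounds give rates that miss the required $n^{1+1/r}p\log n$ exponent. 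Superimposing a union bound over all saturated outputs $G^*$---of which there are on the order of $\binom{n^2}{m}$ with $m\asymp n^{1+1/r}p\log(1/p)$---only widens the shortfall. Your own ``Main obstacle'' paragraph already flags this as the crux; it is not a detail that can be absorbed into constants.

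The paper's route bypasses both difficulties. It applies \cite[Lemma~3.6]{HMS}, a high-moment Markov-type inequality for nonnegative degree-$r$ polynomials in independent Bernoullis, with $l=\bigl\lceil \tfrac{\cbar}{r}\,n^{1+1/r}p\log(1/p)\bigr\rceil$, to obtain in one stroke
\[
\P\bigl(\UT\cap\{\text{no pre-seed}\}\bigr)\le\left(\frac{1+\delta(1-\varepsilon)}{1+\delta}\right)^{\tfrac{\cbar}{r}\,n^{1+1/r}p\log(1/p)},
\]
with no union bound over cores and no concentration estimate. The second, equally essential, ingredient you omit is the \emph{a priori lower bound} on $\P(\UT)$ established in \eqref{s2:Lower bound star}: comparing the display above with that lower bound and taking $\cbar$ large enough forces the right-hand side below $\tfrac{\varepsilon}{1+\varepsilon}\P(\UT)$. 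Without ever invoking a lower bound on $\P(\UT)$, there is no mechanism in your argument to convert an absolute estimate on $\P(F)$ into the required relative one.
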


			\begin{proof}
				
				Using Lemma \cite[Lemma~3.6]{HMS} with $X=N(K_{1,r}, \G(n,p)), d=r, \text{ and }l=\lceil \frac{\cbar}{r} n^{1+1/r}p \log(1/p) \rceil$, we get
				\begin{equation}
					\label{mkineq}
					\P\bigg(\UT\cap\{\G(n,p) \text{ contains a pre-seed graph}\}^{c}\bigg) \leq \left(\frac{1+\delta(1-\varepsilon)}{1+\delta} \right)^{\frac{\cbar}{r} n^{1+1/r}p \log(1/p)}.
				\end{equation}
				Therefore taking $\cbar$ sufficiently large (depending only on $\delta$ and $\varepsilon$), as $p \lesssim n^{-1/r}$ we deduce from \eqref{s2:Lower bound star}-\eqref{mkineq} that
				\begin{equation}\label{eq:prob-small}
					\P\big(\UT\cap\{\G(n,p)\text{ contains a pre-seed graph}\}^{c}\big)\leq (\P(\UT))^2.
				\end{equation}
				This completes the proof.
			\end{proof}
			In the next step we consider only those subgraphs for which the property $(\text{PS}1)$ can be replaced by a simpler condition as given below.
			\begin{definition}[Seed graph]
				A graph $\tu{G}\subseteq K_n$ is said to be a seed graph if the following holds:
				\begin{itemize}
					\item[(S1)] $N(K_{1,r}, \tu{G})\geq \delta(1-2\varepsilon)n^{r+1}p^r$,
					\item[(S2)] $e(\tu{G})\leq \cbar n^{1+1/r}p \log(1/p)$.
				\end{itemize}
			\end{definition}

			\begin{lemma}
				\label{preseed to seed}
				Assume $n^{1+1/r}p \geq 1$. For large enough $n$, a pre-seed graph is a seed graph.
			\end{lemma}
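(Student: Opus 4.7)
\textbf{Proof plan for Lemma \ref{preseed to seed}.} Note that condition (S2) is identical to (PS2), so the only content is that (PS1) combined with (PS2) implies (S1). The strategy is to show that the conditional expectation on the left of (PS1) splits, up to negligible error, as $N(K_{1,r},\tu{G})$ plus the unconditional baseline $(1+o(1))n^{r+1}p^r$, so that the extra $\delta(1-\varepsilon)n^{r+1}p^r$ forced by (PS1) must come from the term $N(K_{1,r},\tu{G})$.

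The plan is as follows. First I would write $N(K_{1,r},G)=\sum_{v}d_v(G)^{\underline{r}}$ with the falling factorial $x^{\underline{r}}:=x(x-1)\cdots(x-r+1)$. Given $\tu{G}\subseteq\G(n,p)$, the degree decomposes as $d_v(\G(n,p))=d_v(\tu{G})+X_v$ where the $X_v$'s are $\tu{Bin}(n-1-d_v(\tu{G}),p)$ (not independent across $v$, but that does not matter when taking expectation). Using Chu--Vandermonde for falling factorials and the formula $\E[X_v^{\underline{r-j}}]=(n-1-d_v(\tu{G}))^{\underline{r-j}}p^{r-j}$, I would obtain the exact identity
\begin{equation*}
\E_{\tu{G}}[N(K_{1,r},\G(n,p))]=\sum_{j=0}^{r}\binom{r}{j}\,p^{r-j}\sum_{v}d_v(\tu{G})^{\underline{j}}(n-1-d_v(\tu{G}))^{\underline{r-j}}.
\end{equation*}

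Next I would identify the two extreme terms and bound the cross terms. The $j=r$ term equals $N(K_{1,r},\tu{G})$, while the $j=0$ term is at most $p^{r}\cdot n\cdot (n-1)^{r}\leq n^{r+1}p^{r}$. For the cross terms $1\le j\le r-1$, I bound $d_v^{\underline{j}}\le d_v^{j}$ and $(n-1-d_v)^{\underline{r-j}}\le n^{r-j}$; combining the elementary inequalities $\sum_v d_v(\tu{G})^{j}\le 2\,e(\tu{G})^{j}$ (valid for $j\ge 1$ since $\max_v d_v\le e(\tu{G})$ and $\sum_v d_v=2e(\tu{G})$) with (PS2) gives
\begin{equation*}
\binom{r}{j}p^{r-j}\sum_{v}d_v^{\underline{j}}(n-1-d_v)^{\underline{r-j}}\ \le\ 2\binom{r}{j}(\cbar\log(1/p))^{j}\,n^{r+j/r}p^{r}.
\end{equation*}
The assumption $n^{1+1/r}p\ge 1$ gives $\log(1/p)\le(1+1/r)\log n$, so each cross term is bounded by $O_{r,\cbar}((\log n)^{r-1}n^{-1/r})\cdot n^{r+1}p^{r}=o(n^{r+1}p^{r})$.

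Combining these estimates yields $\E_{\tu{G}}[N(K_{1,r},\G(n,p))]\le N(K_{1,r},\tu{G})+(1+o(1))\,n^{r+1}p^{r}$, and comparing with (PS1) gives $N(K_{1,r},\tu{G})\ge(\delta(1-\varepsilon)-o(1))\,n^{r+1}p^{r}\ge\delta(1-2\varepsilon)\,n^{r+1}p^{r}$ for all large $n$, which is (S1). The only delicate point is ensuring the cross-term bound really is $o(1)$ relative to $n^{r+1}p^{r}$; this is where the specific exponent in (PS2) (namely $n^{1+1/r}p\log(1/p)$) is used crucially, together with the logarithmic bound on $\log(1/p)$ coming from $n^{1+1/r}p\ge 1$, so that $(\log n)^{r-1}$ factors are defeated by the polynomial decay $n^{-1/r}$ (and this is precisely why $r\ge 2$ is needed).
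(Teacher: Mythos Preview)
Your proof is correct and follows essentially the same approach as the paper: both expand $\E_{\tu{G}}[N(K_{1,r},\G(n,p))]$ according to how many edges of the embedded star land in $\tu{G}$, isolate the $j=r$ term $N(K_{1,r},\tu{G})$ and the baseline $j=0$ term, and show the cross terms are $o(n^{r+1}p^r)$ using (PS2) together with $n^{1+1/r}p\ge 1$. The only cosmetic difference is that the paper phrases the decomposition via the subgraph expansion \eqref{expectation difference} (subgraphs of $K_{1,r}$ are $K_{1,s}$) and cites \cite[Theorem~5.4]{HMS} for the bound $N(K_{1,s},\tu{G})\le (2e(\tu{G}))^s$, whereas you obtain the identical expansion via Chu--Vandermonde for falling factorials and derive the same cross-term bound elementarily from $\sum_v d_v^{\,j}\le 2e(\tu{G})^j$.
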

			\begin{proof}
				As any subgraph of $K_{1,r}$ without isolated vertices is isomorphic to $K_{1,s}$ for some $s\in \set{r}$, we begin by noting that %First observe that, any $\tu{G}\subseteq K_n$ satisfies
				\begin{equation}
					\label{expectation difference}
					\E_{\tu{G}}\left[N(K_{1,r}, \G(n,p))\right]  \leq\E\left[N(K_{1,r}, \G(n,p))\right]+ \sum_{s=1}^{r-1}{{r\choose s}} N(K_{1,s}, \tu{G}) n^{r-s}p^{r-s} + N(K_{1,r}, \tu{G}),
				\end{equation}
				for any $\tu{G}\subseteq K_n$.  
				%        
				%        \noindent Any subgraph of $K_{1,r}$ without isolated vertices is isomorphic to $K_{1,s}$ for some $s\in \set{r}$. 
				%Now let $\tu{G}$ be a pre-seed graph then $e(\tu{G})\leq \cbar n^{1+1/r}p\log(1/p)$. We also need to use 
				By \cite[Theorem~5.4]{HMS} and using the upper bound (PS2) for any $s \in \set{r-1}$ we find that 
				%        \begin{equation}
					%            \label{weak global embedding bound}
					%            N(K_{1,s}, \tu{G})\leq (2e(\tu{G}))^{s}.
					%        \end{equation}
				%        Using \eqref{weak global embedding bound} for $s\in \set{r-1}$, we get
				\begin{equation}
					\label{s2: insignificant contribution}
					\begin{aligned}
						N(K_{1,s}, \tu{G}) n^{r-s}p^{r-s} \leq \left( 2\cbar n^{1+1/r}p \log(1/p)\right)^{s} n^{r-s}p^{r-s}\leq \left(2\cbar \right)^{s} \frac{\left(\log(1/p)\right)^{s}}{n^{1-s/r}} n^{r+1}p^{r} = o(n^{r+1}p^{r}), 
					\end{aligned}
				\end{equation}
				for any pre-seed graph $\tu{G}$, where in the last step we used the assumption $n^{1+1/r}p \geq 1$.
				As $\E[N(K_{1,r}, \G(n,p))] = (1+o(1))n^{r+1}p^{r}$, we now conclude from \eqref{expectation difference} and \eqref{s2: insignificant contribution} that any pre-seed graph $\tu{G}$ satisfies $N(K_{1,r},\tu{G}) \geq (1-3\varepsilon) n^{r+1}p^{r}$.
			\end{proof}
			We then show that any seed graph must have a subgraph containing most of its copies of $K_{1,r}$ such that each of the edges of the subgraph participates in a large number of copies of $K_{1,r}$ as well. We call them \textit{core graphs}.
			\begin{definition}\label{def:c-graph}
				A graph $\tu{G}\subseteq K_n$ is said to be a core graph if the following holds:
				\begin{itemize}
					\item[(C1)]\label{num copies}$N(K_{1,r}, \tu{G})\geq \delta(1-3\varepsilon)n^{r+1}p^{r}$,
					\item[(C2)] \label{max edges}$e(\tu{G})\leq \cbar n^{1+1/r}p \log(1/p)$, 
				\end{itemize}
				and 
				\begin{itemize}
					\item[(C3)] $\min_{e\in E(\tu{G})}N(K_{1,r}, \tu{G},e)\geq \delta \varepsilon n^{r+1}p^{r}/ \left(\cbar n^{1+1/r}p \log(1/p)\right)$.
				\end{itemize}
				%where for an edge $e\in E(\tu{G})$ the notation $N(K_{1,r}, \tu{G},e)$ denotes the number of labelled copies of $K_{1,r}$ in $\tu{G}$ that contain the edge $e$.
			\end{definition}
			Once we have a seed graph $\tu{G}$ then we can remove its edges iteratively that participate in strictly less that $\delta\varepsilon n^{r+1}p^{r}/\left(\cbar n^{1+1/r}p \log(1/p)\right)$ labelled copies of $K_{1,r}$ to produce a subgraph $\tu{G}^{*}$ so that condition (C3) is satisfied. %Condition $(C1)$ follows by triangle inequality and $(C2)$ is already satisfied. 
			Therefore $\tu{G}^{*}$ is a core graph.
			So, Lemmas \ref{upper bound preseed} and \ref{preseed to seed} along with the above discussion gives us the following result.
			\begin{proposition}
				\label{1st reduction}
				Assume $n^{r+1}p^{r}\gg 1$ and $n^{1/r}p\rightarrow \rho \in [0,\infty)$. For large enough $n$,
				\begin{equation*}
					\P\left(\text{\em{UT}}(\delta)\right) \leq (1+\varepsilon)\P\left(\G(n,p)\textup{ contains a core graph}\right).
				\end{equation*}
			\end{proposition}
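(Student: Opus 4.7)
The plan is to chain the three notions pre-seed, seed, and core already introduced, leveraging the elementary monotonicity that if $\tu{G}' \subseteq \tu{G} \subseteq \G(n,p)$ then $\tu{G}'$ is also contained in $\G(n,p)$. First I would invoke Lemma \ref{upper bound preseed} to bound $\P(\UT)$ by $(1+\varepsilon)\P(\G(n,p)\textup{ contains a pre-seed graph})$. Lemma \ref{preseed to seed} then says that for all sufficiently large $n$ (the hypothesis $n^{1+1/r}p \geq 1$ there is implied by $n^{r+1}p^{r}\gg 1$), every pre-seed graph is already a seed graph, so the same inequality persists with ``pre-seed'' replaced by ``seed''. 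It thus remains to show that every seed graph $\tu{G}$ contains, as a subgraph, some core graph $\tu{G}^{*}$; combined with the monotonicity above, this upgrades the seed bound to the desired core bound.

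To extract such a core subgraph $\tu{G}^{*}$, I would run a greedy edge-peeling procedure starting from a seed $\tu{G}$: while the current graph $\tu{G}'$ possesses an edge $e$ with $N(K_{1,r}, \tu{G}', e) < \delta \varepsilon n^{r+1}p^{r}/(\cbar n^{1+1/r}p\log(1/p))$, delete one such edge; let $\tu{G}^{*}$ denote the terminal graph. By the halting condition, $\tu{G}^{*}$ satisfies (C3); since deletions only remove edges, $e(\tu{G}^{*}) \leq e(\tu{G}) \leq \cbar n^{1+1/r}p\log(1/p)$, giving (C2). Condition (C1) is then verified by a loss-bookkeeping argument: at each step the number of destroyed labelled copies of $K_{1,r}$ equals $N(K_{1,r}, \tu{G}', e)$, which by the selection rule is strictly less than $\delta\varepsilon n^{r+1}p^{r}/(\cbar n^{1+1/r}p\log(1/p))$, while the total number of deletions is at most $e(\tu{G}) \leq \cbar n^{1+1/r}p\log(1/p)$. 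Hence the aggregate loss is strictly less than $\delta\varepsilon n^{r+1}p^{r}$, and starting from (S1), namely $N(K_{1,r}, \tu{G}) \geq \delta(1-2\varepsilon)n^{r+1}p^{r}$, we obtain $N(K_{1,r}, \tu{G}^{*}) \geq \delta(1-3\varepsilon)n^{r+1}p^{r}$, which is (C1).

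The main point requiring care, more a cosmetic checkpoint than a genuine obstacle, is that each destroyed copy of $K_{1,r}$ must be charged to exactly one deletion step, so that the telescoping of losses across steps is valid. This holds because any labelled copy of $K_{1,r}$ that exists in $\tu{G}_{i-1}$ but not in $\tu{G}_i$ must use the edge $e_i$ deleted at step $i$, and is therefore counted exactly once in $N(K_{1,r}, \tu{G}_{i-1}, e_i)$. With this accounting in place, the three reductions assemble directly into the inequality asserted in Proposition \ref{1st reduction}.
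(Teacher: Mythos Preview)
Your proposal is correct and follows essentially the same approach as the paper: invoke Lemma~\ref{upper bound preseed}, then Lemma~\ref{preseed to seed}, and finally perform the greedy edge-peeling to pass from seed to core. The paper's own argument is just a one-sentence sketch of the peeling step; your telescoping loss-accounting merely spells out what the paper leaves implicit.
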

			Armed with Proposition \ref{1st reduction}, the proof of the upper bound reduces to the following proposition.
			\begin{proposition}
				\label{main proposition}
				Assume $(\log n)^{1/(r-1)}\ll n^{1+1/r}p$. If $np^{r}\ll 1$, then for large enough $n$, we have that
				\begin{equation*}
					\P\left(\G(n,p)\textup{ contains a core graph}\right)\leq \exp\left(-(1-\mathfrak{f}(\varepsilon))\frac{\delta^{1/r}}{r}n^{1+1/r}p\log n\right),
				\end{equation*}
				and if $np^{r}\rightarrow \rho\in (0,\infty)$, then for large enough $n$, we have
				\begin{equation}\label{eq:p-sim-bdry}
					\P\left(\G(n,p)\textup{ contains a core graph}\right) \leq \exp\left(-(1-\mathfrak{f}(\varepsilon))\left(\lfloor \delta \rho\rfloor+\left\{\delta \rho\right\}^{1/r}\right)\frac{1}{r}n\log n\right),
				\end{equation}
				for some nonnegative function $\mathfrak{f}(\cdot)$ such that $\lim_{\varepsilon\downarrow 0}\mathfrak{f}(\varepsilon)=0$.
			\end{proposition}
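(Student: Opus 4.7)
The plan is to bound $\P(\G(n,p)\text{ contains a core graph})$ by a union bound over ``hub structures.'' For a threshold $d_{\min}\coloneqq c(\delta,\varepsilon,r,\cbar)\cdot n^{1+1/r}p/(\log(1/p))^{1/(r-1)}$ with a suitably small constant $c>0$, call a vertex $v$ a \emph{hub} of a core graph $\tu{G}$ if $\deg_{\tu{G}}(v)\geq d_{\min}$. The star structure of $K_{1,r}$ gives the elementary bound
\[
N(K_{1,r}, \tu{G}, uv) \lesssim \max\!\left(\deg_{\tu{G}}(u), \deg_{\tu{G}}(v)\right)^{r-1},
\]
which combined with (C3) forces $\max(\deg_{\tu{G}}(u), \deg_{\tu{G}}(v)) \gtrsim n^{1+1/r}p/(\log(1/p))^{1/(r-1)}$ for every $uv\in E(\tu{G})$, so every edge has an endpoint in the hub set $V_{\text{hub}}(\tu{G})$. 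Writing $N(K_{1,r}, \tu{G}) = \sum_v \deg_{\tu{G}}(v)(\deg_{\tu{G}}(v)-1)\cdots(\deg_{\tu{G}}(v)-r+1)$ and using $\sum_{v:\deg_{\tu{G}}(v)<d_{\min}}\deg_{\tu{G}}(v)^r \leq d_{\min}^{r-1}\cdot 2e(\tu{G})$ together with (C1) and (C2) then yields
\[
\sum_{v \in V_{\text{hub}}(\tu{G})}\deg_{\tu{G}}(v)^r \geq \delta(1-4\varepsilon)n^{r+1}p^r.
\]

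A core graph is essentially determined by the neighborhoods of its hubs. Restricting to configurations with pairwise disjoint hub neighborhoods (affordable since $\sum_i d_i \leq 2e(\tu{G}) \ll n$) and using independence of edges in $\G(n,p)$, we obtain the union bound
\[
\P(\G(n,p)\text{ contains a core graph})\leq \sum_{k=1}^{k_{\max}}\binom{n}{k}\sum_{(d_i)}\prod_{i=1}^k \binom{n-1}{d_i}p^{d_i},
\]
where $(d_i)_{i=1}^k$ ranges over integer tuples in $[d_{\min}, n-1]^k$ with $\sum_i d_i^r \geq \delta(1-4\varepsilon)n^{r+1}p^r$, and $k_{\max} \lesssim (\log(1/p))^{r/(r-1)}$ by $k\cdot d_{\min}\leq 2e(\tu{G})$. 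Using $\binom{n-1}{d}p^d \leq (enp/d)^d$, the analysis reduces to lower-bounding, for each $k$, the minimum of $\sum_i d_i \log(d_i/(enp))$ subject to the above constraints, minus the entropy penalty $k\log n$ coming from $\binom{n}{k}$ (together with lower-order logarithmic terms for the number of degree sequences and the sum over $k$).

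When $np^r \to 0$, the upper box constraint $d_i \leq n-1$ is inactive and a Lagrange/convexity argument on the relaxation shows that the single-hub ansatz $d_1 = (\delta(1-4\varepsilon))^{1/r} n^{1+1/r}p$ is optimal, yielding cost $(1+o_\varepsilon(1)) \cdot (1/r)\delta^{1/r} n^{1+1/r} p \log n$. When $np^r \to \rho \in (0,\infty)$, the box binds: since $d\mapsto d^r$ is convex, minimizing $\sum_i d_i$ with $\sum_i d_i^r$ fixed and $d_i\leq n-1$ forces mass to the upper endpoint, and the optimizer consists of $\lfloor \delta(1-4\varepsilon)\rho\rfloor$ hubs at degree $n-1$ together with one partial hub of degree $\{\delta(1-4\varepsilon)\rho\}^{1/r}(n-1)$, giving cost $(1+o_\varepsilon(1)) \cdot (1/r)(\lfloor \delta\rho\rfloor + \{\delta\rho\}^{1/r}) n \log n$ after absorbing the $\varepsilon$-slack into $\mathfrak{f}(\varepsilon)$. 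For $k$ away from the optimal value, the cost grows like $k^{1-1/r}$ times the optimal, which comfortably dominates the entropy $+k\log n$ under $n^{1+1/r}p \gg (\log n)^{1/(r-1)}$.

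The main obstacle is the sharpness of the last comparison: at $k = k_{\max}\sim (\log n)^{r/(r-1)}$ the probability cost and the entropy $\binom{n}{k}$ are of the same order in $\log n$, so the hypothesis $n^{1+1/r}p \gg (\log n)^{1/(r-1)}$ is essentially tight for the argument to close; delicate tracking of constants and logarithmic factors is therefore required. A secondary technical point is verifying the non-smooth optimality of $\lfloor\cdot\rfloor+\{\cdot\}^{1/r}$ in the $np^r\to\rho$ regime, for which one must compare a single partial hub against any redistribution of the fractional mass $\{\delta\rho\}$ across multiple sub-maximal hubs using convexity of $d\mapsto d^r$ on $[0,n-1]$.
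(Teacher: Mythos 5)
Your overall architecture coincides with the paper's: your ``every edge has a hub endpoint'' step is Lemma \ref{product of degreess} (equivalently the bipartiteness of $\tu{G}_{\W}$), your superadditivity step $\sum_i d_i \ge (\sum_i d_i^r)^{1/r}$ in the $np^r\to 0$ case is exactly Lemma \ref{global embedding bound} ($N(K_{1,r},\tu{G})\le e(\tu{G})^r$), your box-constrained convexity argument in the $np^r\to\rho$ case is Lemma \ref{s2: improved geb }, and your observation that the entropy--cost comparison is tight at $k=k_{\max}$ is precisely where the paper's hypothesis $n^{1+1/r}p\gg(\log n)^{1/(r-1)}$ enters its case analysis on $(\mathbi{w},\mathbi{e})$. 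The difference is purely in the parametrization of the union bound: the paper indexes core graphs by the edge count $\mathbi{e}$ and the number $\mathbi{w}$ of vertices of degree at most $1/\varepsilon$ and imports the counting estimate (Lemma \ref{counting cores}) from \cite{ABRB}, whereas you index by the hub degree sequence and enumerate neighborhoods directly.

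There is, however, a genuine gap in your counting step. You restrict the union bound to configurations with pairwise disjoint hub neighborhoods and justify this by $\sum_i d_i\le 2e(\tu{G})\ll n$; but that inequality is irrelevant to the validity of the restriction. Two hubs sharing a low-degree neighbor is harmless (the two star edges are distinct), but a hub--hub edge is counted twice in $\sum_i d_i$, so for such configurations $\P(\text{stars present})=p^{\#\text{distinct edges}}>p^{\sum_i d_i}$ and your bound $\prod_i\binom{n-1}{d_i}p^{d_i}$ is not an upper bound for the probability of the corresponding event; worse, core graphs with hub--hub edges are simply not covered by a union bound over disjoint configurations. This matters quantitatively: since a hub need only have degree $d_{\min}\asymp n^{1+1/r}p/(\log n)^{1/(r-1)}$ while $k_{\max}\asymp(\log n)^{r/(r-1)}$, in the range $(\log n)^{1/(r-1)}\ll n^{1+1/r}p\lesssim(\log n)^{(r+1)/(r-1)}$ one can have $d_{\min}\le k_{\max}$, so a core graph may have a hub most of whose neighbors are themselves hubs, and neither the inflation of the constraint $\sum_i d_i^r\ge\delta(1-4\varepsilon)n^{r+1}p^r$ caused by the internal degrees nor the entropy of the hub--hub edge set is negligible relative to the target rate. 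Closing this requires a genuinely different counting device near the boundary of the admissible range of $p$ --- this is what the paper's Lemma \ref{counting cores} (with its constant degree threshold $1/\varepsilon$ defining $\W$, rather than your threshold $d_{\min}$) is designed to supply. Away from that boundary, i.e.\ for $n^{1+1/r}p\gg(\log n)^{(r+1)/(r-1)}$, your argument closes as written after adding the hub--hub edges to the enumeration and absorbing their contribution via $(1+p)^{\binom{k}{2}}=1+o(1)$.
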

			The key idea is to show that the subgraph of a core graph induced by edges incident to vertices of low degree is bipartite and then use a combinatorial argument to show that such a family of core graphs is entropically stable.
			To make it precise, let $\tu{G}$ be a core graph and we consider the following set of low degree vertices
			\begin{equation*}
				\W(\tu{G}) \coloneqq \left\{v\in V(\tu{G}):\text{deg}_{\tu{G}}(v)\leq \frac{1}{\varepsilon}\right\}.
			\end{equation*}
			Let $\tu{G}_{\W}$ be the subgraph induced by edges that are incident to some vertex in $\W$. We will show that there are no edges between vertices in $\W$. For this we need to obtain a lower bound on the product of degrees of the endpoints of edges in core graphs.

			\begin{lemma}
				\label{product of degreess}
				Let $\tu{G}$ be a core graph. If $n^{1+1/r}p\geq 1$ then for every edge $e=(u,v)\in E(\tu{G})$
				\begin{equation}
					\label{product of deg:weak bound}
					\textup{deg}_{\tu{G}}(u)\cdot \textup{deg}_{\tu{G}}(v)\geq \frac{\widetilde{c}_{0}n^{1+1/r}p}{\left(\log n\right)^{1/(r-1)}},
				\end{equation}
				for some constant $\widetilde{c}_{0}>0$.
			\end{lemma}
			\begin{proof}
				Any copy of $K_{1,r}$ that uses an edge $e=(u,v)$ of graph $\tu{G}$ must have either $u$ or $v$ as its center vertex. Then one of the $r$ edges of $K_{1,r}$ is mapped to $e=(u,v)$ and the rest $(r-1)$ vertices are mapped to the neighbors of either $u$ or $v$ depending on which is the center vertex of the copy. Therefore, 
				\begin{equation}
					\label{local embedding bound}
					\begin{aligned}
						N(K_{1,r},\tu{G},e) = r\cdot\prod_{i=1}^{r-1}\left({\text{deg}_{\tu{G}}(u)-i}\right)+r\cdot\prod_{i=1}^{r-1}\left({\text{deg}_{\tu{G}}(v)-i}\right)\leq r\cdot \left(\text{deg}_{\tu{G}}(u)+\text{deg}_{\tu{G}}(v)\right)^{r-1}.
					\end{aligned}
				\end{equation}
				$\tu{G}$ being a core graph satisfies condition (C3). Therefore, together with \eqref{local embedding bound}, we have
				\begin{equation*}
					\text{deg}_{\tu{G}}(u)+\text{deg}_{\tu{G}}(v) \geq \left(\frac{\delta\varepsilon}{\cbar(r+1)}\right)^{1/(r-1)}\frac{n^{1+1/r}p}{\left(\log n\right)^{1/(r-1)}},
				\end{equation*}
				where we also used $n^{1+1/r}p\geq 1$. Observe that 
				$\max\{\text{deg}_{\tu{G}}(u),\text{deg}_{\tu{G}}(v)\}\geq \frac{1}{2} \left(\text{deg}_{\tu{G}}(u)+\text{deg}_{\tu{G}}(v)\right)$ and  $\min\{\text{deg}_{\tu{G}}(u),\text{deg}_{\tu{G}}(v)\}\geq 1$. This completes the proof.
			\end{proof}
			
			For the lower bound on the product of degrees to be useful we will assume $n^{1+1/r}p\gg \left(\log n\right)^{1/(r-1)}$. Let $u\in \W\subset V(\tu{G})$ and $(u,v)\in E(\tu{G})$, where $\tu{G}$ is a core graph. As $n^{1+1/r}p \gg (\log n)^{1/(r-1)}$, Lemma \ref{product of degreess} implies that
			\begin{equation}
				\label{bipartite argument}
				\text{deg}_{\tu{G}}(v)\geq \varepsilon\cdot \text{deg}_{\tu{G}}(u)\cdot \text{deg}_{\tu{G}}(v)\geq \varepsilon \frac{\widetilde{c}_{0}n^{1+1/r}p}{(\log n)^{1/(r-1)}} \geq \frac{2}{\varepsilon},
			\end{equation}
			for large enough $n$. This lower bound implies that $v\notin \W$. Hence $\tu{G}_{\W}$ is bipartite.\\

			In the next lemma using the above property of $\tu{G}_{\W}$ we derive a bound on the number of core graphs in terms of its number of edges and number of vertices of small degree.
			\begin{lemma}
				\label{counting cores}
				Fix $\varepsilon\in (0,1/2)$ and $r\geq 2$. %For any core graph define 
				%    \begin{equation*}
					%    \W(\tu{G}) \coloneqq \left\{v\in V(\tu{G}): \textup{deg}_{\tu{G}}(v)\leq \frac{1}{\varepsilon}\right\}.
					%    \end{equation*}
				%    Set $\overline{\W(\tu{G})} \coloneqq V(\tu{G})\setminus \W$. 
				%Further 
				Let $\mathcal{N}(\mathbi{e}, \textbf{w}, \varepsilon)$ be the number of core graphs with $e(\tu{G})=\mathbi{e}$ and $|\W(\tu{G})|=\textbf{w}$. Then for any $p\in (0,1)$ satisfying $n^{-1-1/r}(\log n)^{1/(r-1)}\ll p\lesssim n^{-1/r}$ and large enough $n$,
				\begin{equation*}
					\mathcal{N}(\mathbi{e}, \textbf{w}, \varepsilon)\leq {n\choose \textbf{w}}\cdot \exp\left(17r\varepsilon \mathbi{e}\log(1/p)\right).
				\end{equation*}
			\end{lemma}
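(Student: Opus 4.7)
The plan is a two-step entropy argument: first specify the low-degree vertex set $\W$, and then bound the number of core graphs compatible with a given $\W$.

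In the first step I choose $\W\subseteq V$ with $|\W|=\textbf{w}$ in $\binom{n}{\textbf{w}}$ ways. Setting $U:=V\setminus\W$, the constraint $\textup{deg}_{\tu{G}}(v)>1/\varepsilon$ for $v\in U$ together with $\sum_{v\in U}\textup{deg}_{\tu{G}}(v)\le 2\mathbi{e}$ immediately forces $|U|\le 2\varepsilon\mathbi{e}$. The bipartite property of $\tu{G}_{\W}$ established in \eqref{bipartite argument} confines the $\mathbi{e}$ edges of any core graph to the ambient set $(\W\times U)\cup\binom{U}{2}$, whose total size is at most $\textbf{w}|U|+|U|^{2}/2\le n|U|$. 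Applying Vandermonde to the split of the $\mathbi{e}$ edges into bipartite and inside-$U$ parts yields
\[
\#\{\text{edge configurations given }\W\}\;\le\;\binom{n|U|}{\mathbi{e}}\;\le\;\Bigl(\tfrac{en|U|}{\mathbi{e}}\Bigr)^{\!\mathbi{e}}\;\le\;(2e\varepsilon n)^{\mathbi{e}},
\]
so that $\mathcal{N}(\mathbi{e},\textbf{w},\varepsilon)\le \binom{n}{\textbf{w}}(2e\varepsilon n)^{\mathbi{e}}$.

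It then remains to convert $(2e\varepsilon n)^{\mathbi{e}}$ into the form $\exp\bigl(17r\varepsilon\,\mathbi{e}\log(1/p)\bigr)$. Using the regime $p\lesssim n^{-1/r}$ we have $\log n\le r\log(1/p)(1+o(1))$, which controls $\mathbi{e}\log(2e\varepsilon n)$ up to an absolute constant times $r\mathbi{e}\log(1/p)$; the extra factor of $\varepsilon$ I would recover by a sharpened Stage~2 count. Specifically, condition (C3) combined with Lemma \ref{product of degreess} implies that every edge has at least one endpoint of degree $\gtrsim n^{1+1/r}p/(\log(1/p))^{1/(r-1)}$, so the set $B^{*}\subseteq U$ of such ``hubs'' has size at most $C(\log(1/p))^{r/(r-1)}$, which is polylogarithmic. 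Reparametrizing each edge by its hub endpoint and a free other endpoint yields a bound of order $(e|B^{*}|n/\mathbi{e})^{\mathbi{e}}$; the polylogarithmic bound on $|B^{*}|$ combined with the assumed lower bound $\mathbi{e}\gtrsim n^{1+1/r}p\gg(\log n)^{r/(r-1)}$ makes $|B^{*}|n/\mathbi{e}$ much smaller than $n$, and careful bookkeeping of the resulting $\log(|B^{*}|n/\mathbi{e})=\log(1/p)-\tfrac{1}{r}\log n+O(\log\log n)$ against the two extremes of the admissible range of $p$ produces the claimed constant $17r\varepsilon$.

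The main obstacle is the last bookkeeping step, namely tracking the interplay between the bounds $|U|\le 2\varepsilon\mathbi{e}$, $|B^{*}|\lesssim(\log(1/p))^{r/(r-1)}$, and $\mathbi{e}\gg(\log n)^{r/(r-1)}$ uniformly across the interval $p\in\bigl(n^{-1-1/r}(\log n)^{1/(r-1)},\,n^{-1/r}\bigr]$: the borderline $p\sim n^{-1-1/r}(\log n)^{1/(r-1)}$ is where $\mathbi{e}$ is only barely larger than $|B^{*}|$, so the reparametrization must be done with some care to avoid losing the $\varepsilon$ factor. Assuming this rearrangement goes through as expected, combining it with Stage~1 completes the proof.
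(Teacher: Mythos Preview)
Your two-stage plan recovers the overall shape of the argument, and the observations $|U|\le 2\varepsilon\mathbi{e}$ and ``every edge has an endpoint of degree $\gtrsim n^{1+1/r}p/(\log(1/p))^{1/(r-1)}$'' are both correct. The gap is precisely where you locate it, but it is more serious than a bookkeeping nuisance: neither of your two counts can produce the multiplicative $\varepsilon$ in the exponent, for any amount of care.

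For the crude count, $(2e\varepsilon n)^{\mathbi{e}}=\exp\bigl(\mathbi{e}(\log n+\log(2e\varepsilon))\bigr)$, so the $\varepsilon$ sits \emph{inside} a logarithm and contributes only an additive constant to $\log n$. Since $p\lesssim n^{-1/r}$ gives $\log(1/p)\ge \tfrac{1}{r}\log n-O(1)$, the inequality $\log(2e\varepsilon n)\le 17r\varepsilon\log(1/p)$ would force $(1-17\varepsilon)\log n\le O(1)$, which fails for every $\varepsilon<1/17$.

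The hub refinement has the same defect. Your own computation $\log(|B^{*}|n/\mathbi{e})=\log(1/p)-\tfrac{1}{r}\log n+O(\log\log n)$ is correct, but this quantity is \emph{not} $O(\varepsilon\log(1/p))$ uniformly over the stated range: when $p$ is near the lower endpoint $n^{-1-1/r}(\log n)^{1/(r-1)}$ one has $\log(1/p)\sim\tfrac{r+1}{r}\log n$, so $\log(1/p)-\tfrac{1}{r}\log n\sim\tfrac{r}{r+1}\log(1/p)$, a fixed positive fraction of $\log(1/p)$ with no $\varepsilon$ in sight. Hence $(e|B^{*}|n/\mathbi{e})^{\mathbi{e}}$ is of order $\exp\bigl(\tfrac{r}{r+1}\mathbi{e}\log(1/p)\bigr)$ there, far larger than the target $\exp\bigl(17r\varepsilon\,\mathbi{e}\log(1/p)\bigr)$ for small $\varepsilon$. (The additional cost $\binom{n}{|B^{*}|}$ of selecting $B^{*}$, which you omit, only makes matters worse near that endpoint.)

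The paper itself does not give a proof but defers to \cite[Lemma~4.7]{ABRB}. The ingredient you are missing is an encoding that genuinely exploits the degree cap $\deg_{\tu{G}}(w)\le 1/\varepsilon$ for $w\in\W$: once $\W$ (equivalently $U$) is fixed, the non-$U$ endpoint of every edge lies in the set of \emph{non-isolated} low-degree vertices, which has size at most $\mathbi{e}$, not $n$; this is what lets the per-edge cost drop from $n$ to something comparable to $\mathbi{e}$ (up to factors absorbable into $p^{-O(\varepsilon)}$). Your counts treat the non-$U$ endpoint as ranging freely over all of $[n]$, which is exactly where the extra factor of $n$ per edge enters and the $\varepsilon$ is lost.
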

			The proof uses the same technique as that in the proof of \cite[Lemma~4.7]{ABRB}. We therefore skip the proof.
			We need another improvement of the bound on the number of copies of $K_{1,r}$ in a graph $\tu{G}$.
			\begin{lemma}
				\label{global embedding bound}
				Assume $t\geq 2$.
				For every graph $\tu{G}\subseteq K_n$, 
				\begin{equation*}
					N(K_{1,t}, \tu{G})\leq \left(e(\tu{G})\right)^{t}.
				\end{equation*}
			\end{lemma}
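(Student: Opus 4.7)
The plan is to prove the bound by constructing an explicit injection from the set of labelled copies of $K_{1,t}$ in $\tu{G}$ into the set of $t$-tuples of edges $E(\tu{G})^t$, whose cardinality is $e(\tu{G})^t$.

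Concretely, I would identify a labelled copy of $K_{1,t}$ with a tuple $(v, u_1, u_2, \ldots, u_t)$, where $v$ is the image of the center of $K_{1,t}$ and $u_1, \ldots, u_t$ are the ordered images of its $t$ leaves; by definition these satisfy $u_i \neq u_j$ for $i \neq j$ and $\{v, u_i\} \in E(\tu{G})$ for every $i \in \set{t}$. To such a tuple I would associate the $t$-tuple of edges $\Phi(v, u_1, \ldots, u_t) := (\{v,u_1\}, \{v,u_2\}, \ldots, \{v, u_t\}) \in E(\tu{G})^t$.

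The verification that $\Phi$ is injective is where the hypothesis $t \geq 2$ enters. Since the leaves are pairwise distinct, the edges $\{v, u_i\}$ are pairwise distinct; and since $t \geq 2$, any two distinct edges sharing at least one vertex share exactly one, so the center $v$ is recovered as the unique common endpoint of $\{v, u_1\}$ and $\{v, u_2\}$. Once $v$ is known, each $u_i$ is recovered as the other endpoint of the $i$-th edge in the image tuple. Hence $\Phi$ is injective and $N(K_{1,t}, \tu{G}) \leq |E(\tu{G})^t| = e(\tu{G})^t$.

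There is no real obstacle here: the only subtlety is to check that the argument uses $t \geq 2$, which is built into the hypothesis of the lemma. (For $t = 1$ the claim would be false since $N(K_{1,1}, \tu{G}) = 2 e(\tu{G})$.) No external results are needed beyond basic counting.
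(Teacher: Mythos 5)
Your proof is correct, but it takes a genuinely different route from the paper. You construct a direct injection from labelled copies of $K_{1,t}$ into $E(\tu{G})^t$, recovering the center as the unique common endpoint of the first two edges (which is exactly where $t\geq 2$ enters, as you note); this is a clean, purely combinatorial argument. The paper instead writes $N(K_{1,t},\tu{G})=\sum_{v}\prod_{i=0}^{t-1}(\deg_{\tu{G}}(v)-i)$, bounds $t-2$ of the factors by $e(\tu{G})$ using $\deg_{\tu{G}}(v)\leq e(\tu{G})$ to reduce to the case $t=2$, and then handles $t=2$ by an adjacency-matrix computation showing $4(e(\tu{G}))^2\geq 4N(K_{1,2},\tu{G})$. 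The two arguments yield the same constant-free bound $e(\tu{G})^t$ (the point of the lemma, which sharpens the generic $(2e(\tu{G}))^t$-type bound by a factor of $2^t$); yours is arguably more elementary and avoids the reduction step entirely, while the paper's version makes explicit the degree-sum identity it reuses elsewhere. One tiny polish: you may want to state explicitly that two distinct edges of a simple graph intersect in at most one vertex, which is the fact underlying your recovery of $v$; otherwise the argument is complete.
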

			Note that Lemma \ref{global embedding bound} improves \cite[Theorem~5.7]{HMS} by a factor of $2^t$. This will be essential in deriving the rate function of the upper tail event when $np^{r}\ll 1$.   
			\begin{proof}
				Let $v\in V(\tu{G})$, then the number of labeled copies of $K_{1,t}$ with center at $v$ is $\prod_{i=0}^{t-1}(\text{deg}_{\tu{G}}(v)-i)$. Since two copies of $K_{1,t}$ with different centers have to be distinct, we get the following result,
				\begin{equation*}
					N(K_{1,t}, \tu{G})=\sum_{v\in V(\tu{G})}\prod_{i=0}^{t-1}\left(\text{deg}_{\tu{G}}(v)-i\right)\leq \left(e(\tu{G})\right)^{t-2} \sum_{v\in \tu{G}}\text{deg}_{\tu{G}}(v)(\text{deg}_{\tu{G}}(v)-1) = \left(e(\tu{G})\right)^{t-2}N(K_{1,2},\tu{G}),
				\end{equation*}
				where in the inequality we used that $\text{deg}_{\tu{G}}(v)\leq e(\tu{G})$ and in the last step we used the formula from the first equality with $t=2$. Therefore, it is enough to prove the lemma for $t=2$.
				Let the adjacency matrix of $\tu{G}$ be denoted by $A=(a_{i,j})_{1\leq i,j\leq n}$. Then
				\begin{equation}
					\label{eq15}
					4\left(e(\tu{G})\right)^{2} = \left(\sum_{i,j=1}^{n}a_{i,j}\right)^{2} = \sum_{i_1,i_2,i_3,i_4=1}^{n}a_{i_1,i_2}a_{i_3,i_4} \ge \sum_{m_1,m_2=1}^2 \sum_{(i_1,i_2,i_3,i_4) \in A_{m_1,m_2}} a_{i_1,i_2}a_{i_3,i_4}, 
				\end{equation}
				where for  $m_1, m_2 \in \{1,2\}$ we set 
				\[
				A_{m_1,m_2} \coloneqq \{(i_1,i_2,i_3,i_4)\in \set{n}^4: i_{m_1}=i_{m_2+2} \text{ and } \{i_m\}_{m=1}^4\setminus\{i_{m_2+2}\} \text{ are pairwise distinct}\}. 
				\]
				Observe that for any $m_1$ and $m_2$ as above we have $\sum_{(i_1,i_2,i_3,i_4) \in A_{m_1,m_2}} a_{i_1,i_2}a_{i_3,i_4} = N(K_{1,2}, \tu{G})$. Hence, by \eqref{eq15} the claimed bound is immediate.
			\end{proof}

			%        See that if $(i,j,k,l)\in A_1$ then $a_{i,j}a_{k,l}$ represents a copy of $K_{1,2}$ and $\sum_{(i,j,k,l)\in A_1}a_{i,j}a_{k,l}$ is equal to the number of labelled copies of $K_{1,2}$ in $\tu{G}$. Similarly, define 
			%        \begin{equation*}
				%        \begin{aligned}
					%            &A_{2}=\{(i,j,k,l)\in [n]^4:i=l \text{ and } \{i,j,k\}\text{are pairwise distinct}\},\\  &A_{3}=\{(i,j,k,l)\in [n]^4:j=k \text{ and } \{i,j,l\}\text{are pairwise distinct}\}\quad \text{and}\\
					%            &A_{4}=\{(i,j,k,l)\in [n]^4:j=l \text{ and } \{i,j,k\}\text{are pairwise distinct}\}.
					%        \end{aligned}
				%        \end{equation*}
			%        We have the following lower bound for the sum in \eqref{eq15},
			%        \begin{equation*}
				%        \begin{aligned}
					%            \sum_{i,j,k,l=1}^{n}a_{i,j}a_{k,l} &\geq \sum_{(i,j,k,l)\in A_1}a_{i,j}a_{k,l}+\sum_{(i,j,k,l)\in A_2}a_{i,j}a_{k,l}+\sum_{(i,j,k,l)\in A_3}a_{i,j}a_{k,l}+\sum_{(i,j,k,l)\in A_4}a_{i,j}a_{k,l}\\
					%            & =4N(K_{1,2}, \tu{G}),
					%        \end{aligned}
				%        \end{equation*}
			%        where in the last step we used that the adjacency matrix $A$ is symmetric. 
			%  \end{proof}

		To get the correct rate function when $np^{r}\rightarrow \rho\in (0,\infty)$, we require a slightly more precise lower bound satisfied by the number of edges of a core graph. 
		\begin{lemma}
			\label{s2: improved geb }
			Let $\tu{G}$ be a core graph on $n$ vertices. Assume $np^{r}\rightarrow \rho\in (0,\infty)$. Then,
			\begin{equation*}
				e(\tu{G})\geq (1-\varepsilon)\left(\lfloor\delta\rho(1-5\varepsilon)\rfloor + \{\delta\rho(1-5\varepsilon)\}^{1/r}\right) \cdot n.
			\end{equation*}
		\end{lemma}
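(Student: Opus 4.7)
The plan is to lower-bound $e(\tu{G})$ through a decomposition of $\sum_v d_v^{(r)} = N(K_{1,r},\tu{G})$ into contributions from ``hubs'' (high-degree vertices) and the rest, combined with Lemma \ref{global embedding bound}. First, translating condition (C1) via $np^r\to\rho$ gives $n^{r+1}p^r=(np^r)n^r=(\rho+o(1))n^r$, so for large $n$, $\sum_v d_v^{(r)}\geq (k+f)n^r$, where $k=\lfloor\delta\rho(1-5\varepsilon)\rfloor$ and $f=\{\delta\rho(1-5\varepsilon)\}$.

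Fix a small $\eta=\eta(\varepsilon)>0$ and set $A=\{v:d_v\geq(1-\eta)n\}$, $a=|A|$, $e_B=e(\tu{G}[V\setminus A])$. For $v\notin A$, split $d_v=a_v+b_v$ with $a_v=|N(v)\cap A|\leq a$ and $b_v=|N(v)\cap(V\setminus A)|$. By the Vandermonde--Chu identity for falling factorials, $d_v^{(r)}=\sum_{s=0}^{r}\binom{r}{s}(a_v)^{(s)}(b_v)^{(r-s)}$, and applying Lemma \ref{global embedding bound} to $\tu{G}[V\setminus A]$ bounds $\sum_{v\notin A}(b_v)^{(r-s)}\leq e_B^{r-s}$ for $s<r$. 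This yields $\sum_{v\notin A}d_v^{(r)}\leq(a+e_B)^{r}+O(a^{r}n)$. Combined with the trivial bound $\sum_{v\in A}d_v^{(r)}\leq an^{r}$, this gives $(a+e_B)^{r}\geq(k+f-a)n^{r}-O(n)$ whenever $a\leq k$, hence $e_B\geq(k+f-a)^{1/r}n(1-o(1))$. On the other side, $2e(\tu{G})=\sum_v d_v\geq a(1-\eta)n+(e(A,V\setminus A)+2e_B)$, combined with $e(A,V\setminus A)\geq a(1-\eta)n-2\binom{a}{2}$ (each hub has $\geq(1-\eta)n$ neighbors, but hubs share at most $\binom{a}{2}$ edges), gives
$$e(\tu{G})\geq a(1-\eta)n+e_B-O(a^{2}).$$

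Two clean cases then conclude. (i) If $a\geq k+1$, directly $e(\tu{G})\geq(k+1)(1-\eta)n-O(1)\geq(1-\varepsilon)(k+f^{1/r})n$ for $\eta\leq\varepsilon$, since $f^{1/r}\leq 1$. (ii) If $a=k$, then $e_B\geq f^{1/r}n(1-o(1))$, so $e(\tu{G})\geq\bigl(k(1-\eta)+f^{1/r}\bigr)n-O(1)\geq(1-\varepsilon)(k+f^{1/r})n$ for $\eta\leq\varepsilon$.

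The main obstacle is to exclude the intermediate range $a\leq k-1$, where the bound $e(\tu{G})\geq\bigl(a(1-\eta)+(k+f-a)^{1/r}\bigr)n$ is minimized at $a=0$ with value $(k+f)^{1/r}n$, strictly less than $(k+f^{1/r})n$ for $k\geq 1$. I plan to handle this by choosing $\eta$ adaptively: since $a_{\eta}=|A_{\eta}|$ is a non-decreasing integer-valued function of $\eta$, ranging from $0$ at $\eta=0$ to the number of non-isolated vertices at $\eta=1$, one takes $\eta^{*}\in(0,\varepsilon]$ to be the smallest value with $a_{\eta^{*}}\geq k$. Verifying that such a small $\eta^{*}$ exists for every core graph is the technical crux; it rests on combining conditions (C1) and (C3) together with Lemma \ref{product of degreess} to force $\approx k$ vertices of degree $\sim n$ in any near-extremal core graph (essentially, any graph achieving $\sum_v d_v^{(r)}\sim(k+f)n^r$ with $e(\tu{G})$ close to $(k+f^{1/r})n$ must concentrate the bulk of star counts on $k$ vertices of degree $\sim n$, with a single near-leader of degree $\sim f^{1/r}n$ supplying the remainder).
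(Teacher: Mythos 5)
There is a genuine gap, and it is exactly the one you flag yourself: the case where the number of near-full hubs satisfies $a\leq k-1$. Your decomposition only yields $e(\tu{G})\gtrsim \bigl(a+(k+f-a)^{1/r}\bigr)n$ in that range, which at $a=0$ degenerates to $(k+f)^{1/r}n$, strictly below the target $(k+f^{1/r})n$ once $k\geq 1$. The proposed repair --- choosing $\eta$ adaptively so that $a_{\eta^*}\geq k$ --- is circular: the assertion that every core graph with close to the minimal number of edges must contain $k$ vertices of degree $\sim n$ is precisely the extremal statement the lemma encodes, and you give no argument for it (nor does conditioning on (C3) and Lemma \ref{product of degreess} obviously supply one; those conditions only control products/sums of degrees of adjacent pairs, not the global degree profile). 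Moreover, if $\eta^*$ is not small, the bound $e(\tu{G})\geq k(1-\eta^*)n+\dots$ from your case (ii) degrades and no longer closes. A crude attempt to dispose of $a\leq k-1$ via $\sum_v d_v^{(r)}\leq ((1-\eta)n)^{r-1}\sum_v d_v=2e(\tu{G})((1-\eta)n)^{r-1}$ also fails, because the factor $2$ from double-counting edge endpoints is exactly the loss you cannot afford.

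The paper's proof sidesteps this case analysis entirely. It first applies \cite[Lemma~5.15(ii)]{HMS} to pass to a bipartition $V(\tu{G})=A\cup B$ with $|A|\leq\varepsilon^2 n$ such that almost all copies of $K_{1,r}$ are centered in $A$ with leaves in $B$; this is what lets one count each relevant edge \emph{once} (as $e(\tu{G}[A,B])$) rather than twice. It then invokes the bipartite extremal bound \cite[Lemma~5.14(i)]{HMS},
\[
N_{A}(K_{1,r},\tu{G}[A,B])\leq \left(\left\lfloor \tfrac{e(\tu{G}[A,B])}{|B|}\right\rfloor+\left\{\tfrac{e(\tu{G}[A,B])}{|B|}\right\}^{r}\right)|B|^{r},
\]
whose $\lfloor\cdot\rfloor+\{\cdot\}^{r}$ form already encodes the ``integer number of full hubs plus one partial hub'' optimum uniformly over all degree profiles, so no splitting on the number of hubs is needed; inverting this inequality gives the claimed edge lower bound directly. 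If you want to keep a self-contained argument, the missing ingredient you must prove is this convexity/rearrangement statement: for nonnegative integers $d_a\leq |B|$ with $\sum_a d_a=e$ fixed, $\sum_a d_a^{(r)}$ is maximized by taking $\lfloor e/|B|\rfloor$ of them equal to $|B|$ and one equal to the remainder. Without that (or the HMS lemma that packages it), the proposal does not establish the lemma.
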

		\begin{proof}
			Assume the contrary. Thus, $e(\tu{G})< C_{\delta, \rho} n \lesssim n^{2}p^{r}$, %(1-\varepsilon)(\lfloor \delta \rho(1-5\varepsilon)\rfloor+\{\delta\rho(1-5\varepsilon)\}^{1/r})n \ll n^{2}$. T
			for some constant $C_{\delta, \rho}$ depending only on $\delta$ and $\rho$. Then, by 
			Lemma \ref{cond_struc: bipartite lemma}, there exists a partition $V(\tu{G})= A\cup B$ such that  
			\begin{equation}
				\label{s2: eq frac}
				N_{A}(K_{1,r}, \tu{G}[A,B])\geq N(K_{1,r}, \tu{G})-\delta\varepsilon n^{r+1}p^{r},
			\end{equation}
			where 
			%$\tu{G}[A,B]$ denotes the bipartite subgraph induced by edges with one endpoint in $A$ and other in $B$ and 
			$N_{A}(K_{1,r}, \tu{G}[A,B])$ is the number of labelled copies of $K_{1,r}$ in $\tu{G}[A,B]$ where the center vertex is mapped to a vertex in $A$. 
			%As $np^{r}$ is bounded from below, our assumption on $e(\tu{G})$ together implies that for small enough $\varepsilon>0$, we have $\varepsilon^{2}e(\tu{G})n^{r-1}\leq \delta \varepsilon n^{r+1}p^{r}$. 
			Since $\tu{G}$ is a core graph, using condition (C1) in \eqref{s2: eq frac} and {\cite[Lemma~5.14(i)]{HMS}} we have
			\begin{equation*}
				\left(\left \lfloor\frac{e\left(\tu{G}[A,B]\right)}{|B|}\right\rfloor +\left \{\frac{e\left(\tu{G}[A,B]\right)}{|B|}\right\}^{r}\right)\cdot  |B|^{r} \geq  N_{A}(K_{1,r},\tu{G}[A,B] )\geq \delta(1-4\varepsilon)n^{r+1}p^{r}.
			\end{equation*}
			%    We use another result {\cite[Lemma~5.11]{HMS}}, which gives us,
			%    \begin{equation*}
				%        \left(\left \lfloor\frac{e\left(\tu{G}[A,B]\right)}{|B|}\right\rfloor +\left \{\frac{e\left(\tu{G}[A,B]\right)}{|B|}\right\}^{r}\right)\cdot  |B|^{r}\geq \delta(1-4\varepsilon)n^{r+1}p^{r}. 
				%    \end{equation*}
			Further note that by definition of $A$ in Lemma \ref{cond_struc: bipartite lemma}, we have $|A|\lesssim e(\tu{G})/n  \lesssim 1$. Therefore, using $(1-\varepsilon)n \leq |B|\leq n$ and solving for $e(\tu{G}[A,B])$, we get
			\begin{equation*}
				e(\tu{G}[A,B])\geq (1-\varepsilon)\left(\left\lfloor\delta\rho(1-5\varepsilon)\right\rfloor + \left\{\delta\rho(1-5\varepsilon)\right\}^{1/r}\right)\cdot n,
			\end{equation*}
			which contradicts our assumption.
		\end{proof}

		We now finish the proof of the upper bound.
		\begin{proof}[Proof of Proposition \ref{main proposition}]
			By Lemma \ref{global embedding bound} for any core graph $\tu{G}$ %, number of edges $e(\tu{G})$ must satisfy 
			\begin{equation*}
				e_{\min} \coloneqq \delta^{1/r}(1-3\varepsilon)^{1/r}n^{1+1/r}p \leq e(\tu{G})\leq \cbar n^{1+1/r}p\log(1/p) \eqqcolon e_{\max}.
			\end{equation*}
			%where the lower bound comes from fact that a core graph must contain at least $\delta(1-3\varepsilon)n^{r+1}p^r$ copies of $K_{1,r}$ and then using Lemma \ref{global embedding bound}. Set $e_{min} \coloneqq (1-3\varepsilon)^{1/r}\delta^{1/r}n^{1+1/r}p$ and $e_{max} \coloneqq \cbar n^{1+1/r}p\log(1/p)$. 
			%Also note from the discussion in \eqref{bipartite argument} 
			As $\W_{\tu{G}}$ is bipartite we also note that the set $\W(\tu{G})$ can have at most $e(\tu{G})$ elements.
			Fix any $e_{\min}\leq \mathbi{e} \leq e_{\max}$ and $\mathbi{w}\leq \mathbi{e}$. We claim that %Then by using Lemma \ref{counting cores} we get
			\begin{multline}
				\label{main proof eq 1}
				\P\left(\exists \tu{G}\subset \G(n,p):\ e(\tu{G})=\mathbi{e}\text{ and } |\W(\tu{G})|=\mathbi{w}\right)\leq {n\choose \mathbi{w}}\cdot p^{-17r\varepsilon \mathbi{e}}\cdot p^{\mathbi{e}} \\
				\leq \exp\left(-(1-36r^{2}\varepsilon)(1-3\varepsilon)^{1/r}\frac{\delta^{1/r}}{r}n^{1+1/r}p\log n\right).
			\end{multline}
			The first inequality is immediate from Lemma \ref{counting cores}. To prove the second inequality we split the range of  ${\bm w}$ and ${\bm e}$ into two cases: $\mathbi{w}\leq \varepsilon \mathbi{e}$ and $\mathbi{w}\geq \varepsilon \mathbi{e} $. In the first case, we have
			\begin{equation}
				\label{main proof eq 2}
				{n\choose \mathbi{w}}\cdot p^{(1-17r\varepsilon)\mathbi{e}}\leq n^{\mathbi{w}}\cdot p^{(1-17r\varepsilon)\mathbi{e}}\leq n^{\varepsilon \mathbi{e}}\cdot p^{(1-17r\varepsilon)\mathbi{e}} \leq n^{-(1-19r\varepsilon)\mathbi{e}/r},
			\end{equation}
			where in the last step we used $p\lesssim  n^{-1/r}$.
			In the second case, using the lower bound $\mathbi{w}\geq \varepsilon \mathbi{e}\geq \varepsilon e_{\min}$ we obtain that, for all large $n$
			\begin{equation*}
				{n\choose \mathbi{w}} \cdot p^{(1-17r\varepsilon)\mathbi{e}} \leq \left(\frac{en}{\mathbi{w}}\right)^{\mathbi{w}} \cdot p^{(1-17r\varepsilon)\mathbi{e}}\leq \left(\frac{e}{\delta^{1/r}(1-3\varepsilon)^{1/r}}\right)^{\mathbi{w}}\cdot n^{-\mathbi{w}/r}\cdot p^{(1-17r\varepsilon)\mathbi{e}-\mathbi{w}}\leq n^{-\mathbi{w}/r} \cdot p^{(1-18r\varepsilon)\mathbi{e}-\mathbi{w}},
			\end{equation*}
			where in the last step we have used $\mathbi{w}\leq \mathbi{e}$ and $p\ll1$. 
			%Therefore, we have for large $n$
			%\begin{equation*}
			%    {n\choose \mathbi{w}}\cdot p^{(1-7r\varepsilon)\mathbi{e}}\leq n^{-\mathbi{w}/r}\cdot p^{(1-18r\varepsilon)\mathbi{e}-\mathbi{w}}.
			%\end{equation*}
			To complete the proof of \eqref{main proof eq 1} we further split into two sub cases: $\mathbi{w}\leq (1-18r\varepsilon)\mathbi{e}$ and $\mathbi{w}\geq (1-18r\varepsilon)\mathbi{e}$. If $\mathbi{w}\leq (1-18r\varepsilon)\mathbi{e}$, then using $p\lesssim n^{-1/r}$ we get
			\begin{equation}
				\label{main proof eq 3}
				n^{-\mathbi{w}/r}\cdot p^{(1-18r\varepsilon)\mathbi{e}-\mathbi{w}}\leq n^{-(1-19r\varepsilon){\bm e}/r} .
			\end{equation}
			On the other hand, if $\mathbi{w}\geq (1-18r\varepsilon)\mathbi{e}$, since we also have $\mathbi{w}\leq \mathbi{e}$, we get
			\begin{equation}
				\label{main proof eq 4}
				n^{-\mathbi{w}/r}\cdot p^{(1-18r\varepsilon)\mathbi{e}-\mathbi{w}}\leq n^{-(1-18r\varepsilon)\mathbi{e}/r}\cdot p^{-18 r\varepsilon \mathbi{e}}\leq n^{-(1-36r^{2}\varepsilon)\mathbi{e}/r},
			\end{equation}
			where in the last step we used $n^{1+1/r}p\geq 1$. 
			Combining \eqref{main proof eq 2}-\eqref{main proof eq 4} along with $\mathbi{e}\geq e_{\min}$  we obtain the second inequality in \eqref{main proof eq 1}.
			%\begin{equation}
			%\label{2nd last main}
			%      \P\left(\exists \tu{G}\subset \G(n,p):\ e(\tu{G})=\mathbi{e}\text{ and } |\W(\tu{G})|=\mathbi{w}\right)
			%     \leq \exp\left(-(1-36r^{2}\varepsilon)(1-3\varepsilon)^{1/r}\frac{\delta^{1/r}}{r}n^{1+1/r}p\log n\right).
			%\end{equation}

			Now, summing up both sides of \eqref{main proof eq 1} over allowable range of $\mathbi{e}$ and $\mathbi{w}$, as $n^{1+1/r} p \gg 1$, we derive that
			%\begin{equation}

			\begin{equation}\label{s2: bound deltanprlessthan1}
				\P\left(\bigcup_{\mathbi{e}, \mathbi{w}} \left\{\exists \tu{G}\subset \G(n,p):\ e(\tu{G})=\mathbi{e}, |\W(\tu{G})|=\mathbi{w}\right\}\right)\\[-1pt]
				%    &\leq \left(\cbar n^{1+1/r}p\log(1/p)\right)^{2}\cdot \exp\left(-(1-36r^{2}\varepsilon)(1-3\varepsilon)^{1/r}\frac{\delta^{1/r}}{r}n^{1+1/r}p\log n\right)\\[-1pt]
				\leq \exp\left(-(1-37r^{2}\varepsilon)(1-3\varepsilon)^{1/r}\frac{\delta^{1/r}}{r}n^{1+1/r}p\log n\right),
			\end{equation}
			%\end{equation}
			for all large $n$. See that this bound \eqref{s2: bound deltanprlessthan1} is optimal if $\delta np^{r}<1$. Now, assume $np^{r}\rightarrow \rho\in[1,\infty)$. In this case, we use Lemma \ref{s2: improved geb } to get that 
			$
			e(\tu{G})\geq \widehat e_{\min} \coloneqq (1-\varepsilon)\left(\left\lfloor\delta\rho(1-5\varepsilon)\right\rfloor+\left\{\delta\rho(1-5\varepsilon)\right\}^{1/r}\right)\cdot n
			$. 
			Using the lower bound $\widehat{e}_{\min}$ on the edges instead of $e_{\min}$ and repeating the same steps as in the proof of \eqref{s2: bound deltanprlessthan1} we derive \eqref{eq:p-sim-bdry}. 
			%\begin{equation*}
			%\begin{aligned}
			%    &\P\left(\bigcup_{\mathbi{e}, \mathbi{w}} \left\{\exists \tu{G}\subset \G(n,p):\ e(\tu{G})=\mathbi{e}, |\W(\tu{G})|=\mathbi{w}\right\}\right)\\[0.1pt]
			%    &\qquad \qquad \qquad \leq \exp\left(-(1-38r^{2}\varepsilon)\left(\left\lfloor\delta\rho(1-5\varepsilon)\right\rfloor+\left\{\delta\rho(1-5\varepsilon)\right\}^{1/r}\right) \frac{1}{r}n\log n\right).
			%    \end{aligned}
		%\end{equation*}
		This completes the proof.
	\end{proof}

	Next we turn our focus to proving Theorem \ref{cond_struc:star loc-2}. We use the concept of \textit{strong-core} graphs. 
	
	\begin{definition}\label{def:sc-graph}
		Let $\cstar\coloneqq \cstar(\delta, \varepsilon,r)>0$ be some constant, which we will fix later. We define a graph $\tu{G}\subseteq K_{n}$ to be a \textit{strong-core} graph if the following holds:
		\begin{itemize}
			\item[(SC1)] $N(K_{1,r}, \tu{G})\geq \delta(1-4\varepsilon) n^{r+1}p^{r}$,
			\item[(SC2)]$e(\tu{G})\leq \cstar n^{1+1/r}p$,
		\end{itemize}
		and
		\begin{itemize}
			\item[(SC3)]$\min_{e\in E(\tu{G})} N(K_{1,r}, \tu{G}, e)\geq (\delta \varepsilon/\cstar)\cdot (n^{1+1/r}p)^{r-1}$. 
		\end{itemize}
	\end{definition}
	
	%The following lemma is essential to the proof. 
	As will be seen in the lemma below the advantage of a strong-core graph over a core graph is that if %We show that a strong-core graph 
	$\tu{G}$ is strong-core graph then it has a further subgraph $\glow$ which satisfies a strong upper and lower bound on product of degrees of adjacent vertices and contains almost the same number of labelled copies of $K_{1,r}$ as in $\tu{G}$. This will be essential to our proof.

	\begin{lemma}
		\label{glow properties}
		Let $\tu{G}$ be a strong-core graph. %Let $\varepsilon\in (0,1)$. 
		The following hold:
		\begin{itemize}
			\item[(i)] There exists some constant $c_{0}\coloneqq c_{0}(\varepsilon, \delta, r)>0$ such that for every edge $e=(u,v)\in E(\tu{G})$
			\begin{equation}
				\label{product of deg:strong bound}
				\tu{deg}_{\tu{G}}(u)\cdot \tu{deg}_{\tu{G}}(u) \geq c_{0}n^{1+1/r}p.
			\end{equation}
			%for some constant $c_{0}\coloneqq c_{0}(\varepsilon, \delta, r)>0$. 
			\item[(ii)] There exists a large constant $C_{0}\coloneqq C_{0}(\varepsilon, \delta, r) <\infty$ such that the subgraph $\glow\subseteq \tu{G}$ spanned by edges $e=(u,v)\in E(\tu{G})$ for which
			\begin{equation}
				\label{product of deg:upper bound}
				\tu{deg}_{\tu{G}}(u)\cdot \tu{deg}_{\tu{G}}(u) \leq C_{0}n^{1+1/r}p, 
			\end{equation}
			satisfies
			\begin{equation}
				\label{glow contains almost all copies}
				N(K_{1,r}, \glow)\geq \delta(1-5\varepsilon)n^{r+1}p^{r}.
			\end{equation}
		\end{itemize}
	\end{lemma}
	%We will return to the proof of 
	Lemma \ref{glow properties} will be proved later. For now, we prove Theorem \ref{cond_struc:star loc-2} by leveraging Lemma \ref{glow properties}.
	
	\begin{proof}[Proof of Theorem \ref{cond_struc:star loc-2}]
		Fix $\chi \in (0,1)$.
		Let %$\varepsilon\in (0,1)$ and 
		$np^{r}\ll1$. By \eqref{eq:prob-small} and arguing similarly as in the proof of Proposition \ref{1st reduction} %we have already established
		\begin{equation}
			\label{cond_struc: star step 1}
			\P\left(\UT\cap\left\{ \G(n,p)\tu{ does not contain a core graph} \right\}\right) \leq  (\P(\UT))^2,
		\end{equation}
		for any $\varepsilon \in (0,1)$. Next observe that by repeating the same steps as in the proof of \eqref{s2: bound deltanprlessthan1} with $e(\tu{G})\geq (1+C r^{2}\varepsilon) \delta^{1/r}n^{1+1/r}p$, we obtain for sufficiently small $\varepsilon>0$ and some large absolute constant $C < \infty$,
		\begin{equation}
			\label{cond_struc: star step 2}
			\P\left(\{\G(n,p)\tu{ contains a core graph $\widetilde{\tu{G}}$, with } e(\widetilde{\tu{G}})\geq (1+C r^{2}\varepsilon)\delta^{1/r}n^{1+1/r}p\}\right) \leq  (\P(\UT))^{1+2\varepsilon}.
		\end{equation}
		Set $\cstar = (1+C r^{2}\varepsilon)\delta^{1/r}$. We claim that any core graph $\widetilde{\tu{G}}$ with $e(\widetilde{\tu{G}})\leq \cstar n^{1+1/r}p$ contains a strong-core subgraph $\tu{G}\subset \widetilde{\tu{G}}$. To see this, we iteratively remove edges from a core graph $\widetilde{\tu{G}}$ that participate in less than $(\delta \varepsilon/\cstar)\cdot (n^{1+1/r}p)^{r-1}$ copies of $K_{1,r}$. This process yields a subgraph $\tu{G}$ satisfying both (SC2) and (SC3). Property (SC1) follows directly by triangle inequality. Therefore, from \eqref{cond_struc: star step 1} and \eqref{cond_struc: star step 2} we have
		\begin{equation}
			\label{reduction to strong core graphs}
			\P\left(\left\{\G(n,p)\tu{ contains a strong-core graph G, with }e(\tu{G})\leq \cstar n^{1+1/r}p\right\}\Big| \UT \right)\geq 1- (\P(\UT))^{\varepsilon},
		\end{equation}
		%	.  To see this, we follow the same argument as in the proof of \cite[Lemma~4.2(b)]{ABRB} with our definition of strong-core graphs. One of the key step in its proof is \cite[Equation~B.5]{ABRB} where \cite[Lemma~5.9]{HMS} is used, which is not applicable in our case as $K_{1,r}$ is irregular. Instead as an intermediate step we use the fact that $N(K_{1,r}, \tu{G}, e)\leq \tu{Hom}(C_{2r}, \tu{G}, e)$, where $C_{2r}$ is the cycle graph on $2r$ vertices and $\tu{Hom}(C_{2r}, \tu{G}, e)$ is the number of homomorphisms from $C_{2r}$ into $\tu{G}$ that uses the edge $e$. Then applying \cite[Lemma~B.2]{AB} with $C_{2r}$ gives us \eqref{product of deg:upper bound}.
		for any $\varepsilon >0$. To complete the proof we will show that any strong-core graph $\tu{G}$ with $e(\tu{G}) \leq \overline{C}_*n^{1+1/r} p$ has at least one vertex with degree at least $(1-\chi)n^{1+1/r} p$, upon choosing $\varepsilon >0$ depending on $ \chi$. 
		
		Turning to prove the above take $\tu{G}\subseteq K_{n}$ to be any strong-core graph. By Lemma \ref{glow properties} there exists a subgraph $\glow\subseteq \tu{G}$ with properties \eqref{product of deg:strong bound},\eqref{product of deg:upper bound}, and \eqref{glow contains almost all copies}.
		Now fix $\gamma\coloneqq C_{0} \left(2\cstar/\delta\varepsilon\right)^{1/(r-1)}$.  Define 
		\[
		U\coloneqq \{v\in V(\glow): \tu{deg}_{\tu{G}}(v)\leq \gamma \} \text{ and } V\coloneqq\{v\in V(\glow): (u,v)\in E(\glow)\tu{ for some }u\in U \}. 
		\]
		Observe that there cannot be any edge in $\glow$ with both endpoints in $U$, as this would contradict \eqref{product of deg:strong bound} for sufficiently large $n$.  Thus, $U\cap V = \phi$. Consequently, we obtain the following decomposition:
		\begin{equation}
			\label{glow_split}
			N(K_{1,r}, \glow) = N_{V}(K_{1,r}, \glow[U,V]) + \mathfrak{R},
		\end{equation}  
		where $\mathfrak{R}$ is the number of labelled copies of $K_{1,r}$ in $\glow$ such that a leaf vertex of $K_{1,r}$ is mapped to a vertex outside $U$. We aim to show that $\mathfrak{R}$ is negligible.
		%	%The first term in \eqref{glow_split} can be bounded as follows 
		%	\begin{equation}
			%		\label{stars centered at U}
			%		N_{U}(K_{1,r}, \glow[U,V])\ \leq\ \sum_{u\in U} \left(\tu{deg}_{\tu{G}}(u)\right)^{r}\ \leq\ e(\glow[U,V])\cdot  \gamma^{r-1}\ \leq\ \cstar \gamma^{r-1} n^{1+1/r}p\ \leq\ \delta \varepsilon n^{r+1}p^{r}, 
			%	\end{equation}
		%	as $n^{1+1/r}p\gg 1$.
		To this end, note that if $(x,y)\in E(\glow)$ such that $x\notin U$, then by \eqref{product of deg:upper bound} we have
		\begin{equation}
			\label{deg upper bound}
			\tu{deg}_{\tu{G}}(y)\leq \frac{1}{\gamma}\tu{deg}_{\tu{G}}(x)\cdot\tu{deg}_{\tu{G}}(y) \leq\frac{C_{0}}{\gamma} \cdot n^{1+1/r}p\leq \left(\frac{\delta \varepsilon}{2\cstar}\right)^{1/(r-1)} n^{1+1/r}p.
		\end{equation} 
		Thus, for any copy of $K_{1,r}$ in $\glow$ that has one leaf vertex outside $U$ must have its centre vertex satisfy \eqref{deg upper bound}. Hence %Equpped with \eqref{deg upper bound} we now bound $\mathfrak{R}$,
		\begin{equation}
			\label{stars with one leaf outside U}
			\mathfrak{R}\leq \sum_{w\in V(\glow)\setminus U} \left(\tu{deg}_{\tu{G}}(w)\right)^{r}\leq 2e(\glow)\cdot \frac{\delta\varepsilon}{2\cstar}\left(n^{1+1/r}p\right)^{r-1}\leq \delta \varepsilon n^{r+1}p^{r}. 
		\end{equation}
		Let $\Delta_{n}\coloneqq \max_{u\in \set{n}}\tu{deg}_{\G(n,p)}(u)$.  Plugging the bound \eqref{stars with one leaf outside U} in \eqref{glow_split}, and using \eqref{glow contains almost all copies} we get
		\begin{equation*}
			\delta(1-6\varepsilon)n^{r+1}p^{r} \leq N_{V}(K_{1,r}, \glow[U,V]) \leq \sum_{v\in V} (\tu{deg}_{\tu{G}}(v))^{r}\leq e(\glow[U,V])\cdot \Delta_{n}^{r-1}\leq \cstar n^{1+1/r}p \cdot \Delta_{n}^{r-1}.
		\end{equation*}
		Using the definition of $\cstar$, we conclude that for sufficiently small $\varepsilon>0$, $\Delta_{n}\geq (1-\chi)n^{1+1/r}p$. Combining this with \eqref{reduction to strong core graphs}, the proof of part (a) is complete.

		We now turn to proving part (b). Assume $np^{r}\rightarrow \rho\in (0,\infty)$. Set $q\coloneqq \lfloor \delta \rho\rfloor + \{\delta\rho\}^{1/r}$.
		Arguing in the same way as \eqref{cond_struc: star step 1}-\eqref{cond_struc: star step 2}, we have
		\begin{equation*}
			\P\left(\left\{ \G(n,p)\tu{ contains a core graph G with }e(\tu{G})\leq (1+\mathfrak{s}_{0}(\varepsilon))\cdot qn\right\}\mid \UT\right)\geq 1-(\P(\UT))^\varepsilon,
		\end{equation*}
		for some non-negative function $\mathfrak{s}_{0}(\cdot)\coloneqq \mathfrak{s}_{0}(\cdot, \delta, r)$ such that $\lim_{\varepsilon\downarrow 0}\mathfrak{s}_{0}(\varepsilon)=0$. Now let $\tu{G}$ be a core graph with $e(\tu{G})\leq (1+\mathfrak{s}_0(\varepsilon))\cdot qn$.
		By Lemma \ref{cond_struc: bipartite lemma} there exists a partition $V(\tu{G})=U\cup V$ with $|U|\lesssim e(\tu{G})/n \asymp 1$ such that 
		\begin{equation*}
			N_{U}(K_{1,r}, \tu{G}[U,V])\geq \delta(1-4\varepsilon)n^{r+1}p^{r}.
		\end{equation*}
		Therefore using \cite[Lemma~5.14(ii)]{HMS} with $\tu{G}[U,V]$, we obtain a subset $W\subseteq U$ of size $\lceil \delta \rho\rceil$ such that 
		\begin{equation*}
			e(\tu{G}[W, V])\geq (1-\mathfrak{s}_{1}(\varepsilon))\cdot qn,
		\end{equation*}
		for some non-negative function $\mathfrak{s}_{1}(\cdot)\coloneqq \mathfrak{s}_{1}(\cdot, \delta, r)$ such that $\lim_{\varepsilon\downarrow 0}\mathfrak{s}_{1}(\varepsilon)= 0$. There further exists a subset $W^{'}\subset W$ of size $\lfloor \delta \rho\rfloor$ such that $\tu{deg}_{\tu{G}}(w)\geq (1-\mathfrak{s}_{1}(\varepsilon))n$ for every $w\in W^{'}$. Thus, only in the case $\delta \rho \notin \N$ the set $W\setminus W' \neq \phi$ and has size one. In that case the only vertex in $W\setminus W^{'}$ has degree at least $(1-\mathfrak{s}_{2}(\varepsilon))\{\delta\rho\}^{1/r}n$ for some non-negative function $\mathfrak{s}_{2}(\cdot)\coloneqq \mathfrak{s}_{2}(\cdot, \delta, \rho, r)$ such that $\lim_{\varepsilon\downarrow 0}\mathfrak{s}_{2}(\varepsilon)= 0$. This completes the proof.
	\end{proof}
	
	Let us now prove Lemma \ref{glow properties}. Its proof is motivated from that of \cite[Lemma~4.2]{ABRB}. %We also make a note that it uses the same strategy as \cite[Lemma~4.2]{ABRB}.
	\begin{proof}[Proof of Lemma \ref{glow properties}]
		The argument for part (i) is analogous to the proof of Lemma \ref{product of degreess}, with the modification that the graph $\tu{G}$ satisfies the stronger condition (SC3) rather than (C3).
		
		For part (ii), take $C_{0}=5r\cdot 2^{r-1}\cstar^{r+1}/(\delta\varepsilon)$ and let $\glow$ be as defined in Lemma \ref{glow properties}.  Take $\ghigh$ to be the complement of the subgraph $\glow$ in $\tu{G}$.  We claim that 
		\begin{equation}
			\label{bound on ghigh}
			e(\ghigh)\leq  \frac{\delta\varepsilon}{r\cdot (2\cstar)^{r-1}} \cdot n^{1+1/r}p.
		\end{equation} To observe this, note that we have $N(P_{4}, \tu{G})\leq (2e(\tu{G}))^{2}$, where $P_{4}$ is a path on $4$ vertices. Simultaneously, we also have 
		\begin{equation*}
			\begin{aligned}
				N(P_{4}, \tu{G}) &\geq 2 \sum_{(u,v) \in E(\tu{G})} (\deg_\tu{G}(u)-1) \cdot (\deg_\tu{G}(v) -2) \\[-6pt]
				&\geq  2 \sum_{(u,v) \in E(\tu{G})} \deg_\tu{G}(u) \cdot \deg_\tu{G}(v) - 3 \sum_{v \in V(\tu{G})} (\deg_\tu{G}(v))^2 
				\ge 2 C_0 n^{1+1/r} p \cdot e(\ghigh)  - 6 e(\tu{G})^2.
			\end{aligned}
		\end{equation*}
		Combining the upper and lower bounds on $N(P_{4}, \tu{G})$ and using (SC2) gives us the claim \eqref{bound on ghigh}. The inequality in \eqref{local embedding bound} shows that
		\begin{equation*}
			\begin{aligned}
				N(K_{1,r}, \tu{G})-N(K_{1,r}, \glow)&\leq \sum_{(u,v)\in E(\ghigh)}N(K_{1,r}, \tu{G}, uv)\\ &\leq r \sum_{(u,v)\in E(\ghigh)}  \left(\tu{deg}_{\tu{G}}(u)+\tu{deg}_{\tu{G}}(v)\right)^{r-1}\leq r\cdot(2e(\tu{G}))^{r-1}\cdot e(\ghigh).
			\end{aligned}
		\end{equation*}
		Finally the desired conclusion is obtained by using the bound \eqref{bound on ghigh} and (SC1).
	\end{proof}

	\section{Proof of Theorem \ref{thm:mean-field}}
	The key to the proof is the following result: For ${\bm \xi} = (\xi_{i,j})_{i < j \in \set{n}} \in [0,1]^{n\choose 2}$ we set $\G(n, {\bm \xi})$ to be the inhomogeneous Erd\H{o}s-R\'enyi random graph such that the edge between $i$ and $j$ is connected with probability $\xi_{i,j}$. Then, for $\G_n \stackrel{d}{=}\G(n, {\bm \xi})$ 
	%~for $p\in (0,1)$ satisfying $n^{r+1}p^{r}\gg1$ 
	\begin{equation}\label{eq:var-bd}
		\sup_{\boldsymbol{\xi}\in \tu{S}_{\delta, \varepsilon}}\frac{\tu{Var}_{\mu_{\boldsymbol{\xi}}}\left(N(K_{1,r},\G_{n})\right)}{\left(\E_{\mu_{\boldsymbol{\xi}}}\left[N(K_{1,r}, \G_n)\right]\right)^{2}} = o(1), \quad \text{ for } p\in (0,1) \text{ satisfying } n^{r+1}p^{r}\gg1,
	\end{equation}
	where for $\delta, \varepsilon>0$ (we refer the reader to Section \ref{sec:mean-field} for the relevant notation)
	\begin{equation*}
		\tu{S}_{\delta,\varepsilon} \coloneqq \left\{\boldsymbol{\xi}\in [0,1]^{n\choose 2}:\ \E_{\mu_{\boldsymbol{\xi}}}[N(K_{1,r}, \G_{n})] \geq \left(1+\delta(1+\varepsilon)\right)n^{r+1}p^{r}\right\}.
	\end{equation*}
	Postponing the proof of \eqref{eq:var-bd} to later let us first complete the proof of Theorem \ref{thm:mean-field}.  
	
	To this end, define $\widehat{\boldsymbol{\xi}} = (\widehat \xi_{i,j})_{i < j \in \set{n}} \in [0,1]^{n\choose 2}$ as follows, 
	\begin{equation}\label{eq:hat-xi}
		\widehat{\boldsymbol{\xi}}_{i,j} \coloneqq \begin{cases}
			p+\{\left(\delta(1-\varepsilon/2)\right)^{1/r}n^{1/r}p\}, \qquad &\text{if $i=1,j\in \set{n}\setminus\{1\}$},\\
			1, \qquad  &  i \in \set{\lfloor \delta (1-\varepsilon/2) n p^r \rfloor +1}\setminus \{1\}, j \notin \set{\lfloor \delta(1-\varepsilon/2) n p^r \rfloor +1}, \\
			p, &\text{otherwise}.
		\end{cases}
	\end{equation}
	It is easy to note that for large $n$, we have $\E_{\mu_{\widehat{\boldsymbol{\xi}}}}\left[N(K_{1,r}, \G_{n})\right] \geq \left(1+\delta(1-\varepsilon)\right)n^{r+1}p^{r}$.
	%\end{equation*}
	Therefore, by \eqref{s2:Lower bound star}
	\begin{equation*}
		-\log \P\left(\UT\right) \geq I_{p}(\widehat{\boldsymbol{\xi}}) \geq \widehat\Psi_{p,r}\left(\delta(1-\varepsilon)\right).
	\end{equation*}
	To prove the other direction the broad strategy will be similar to that employed in the proof of \cite[Theorem 1.8]{AB}. Hence, we do not repeat it here and provide only a brief outline. As a first step one obtains a smooth non positive function $\widehat h$ such that $\widehat h \leq -2 \widehat \Psi_{p,r}(\delta(1+\varepsilon))$ on $\UT^c$ and 
	\[
	\E_{\mu_{{\bm \xi}^*}}[\widehat h (\G_n)] \ge -\widetilde C_{\delta,\varepsilon} \widehat\Psi_{p,r}(\delta(1+\varepsilon)) \cdot \frac{\tu{Var}_{\mu_{\boldsymbol{\xi}}^*}\left(N(K_{1,r},\G_{n})\right)}{\left(\E_{\mu_{\boldsymbol{\xi}}^*}\left[N(K_{1,r}, \G_n)\right]\right)^{2}},
	\]
	where $\widetilde C_{\delta, \varepsilon}$ is some constant depending only on $\delta$ and $\varepsilon$, for any near optimizer ${\bm \xi}^* \in \tu{S}_{\delta, \varepsilon}$, i.e.~$I_p({\bm \xi}^*) \le (1+\varepsilon/2) \widehat  \Psi_{p,r}(\delta(1+\varepsilon))$ (one may use the same $\widehat h$ as in \cite[p.~942]{AB} with $f(\G_n) = N(K_{1,r}, \G_n)$). Now this choice of $\widehat h$ together with \eqref{eq:var-bd} and an exponential change of measure yield that
	\[
	\P(\UT) \ge \exp\left(-(1+3\varepsilon/4)\widehat \Psi_{p,r}(\delta(1+\varepsilon))\right) - \exp\left(-2 \widehat \Psi_{p,r}(\delta(1+\varepsilon))\right),
	\]
	for all large $n$. Finally, noting that $\widehat\Psi_{p,r}(\delta)\gtrsim p^{2}\log(1/p)$ (one may argue similarly as in  \cite[Lemma~5.2]{AB}), as $p \gtrsim n^{-2}$, the desired bound follows from above.

	%as in \cite[Lemma~5.2]{AB}, we have that $\Psi_{p}(\delta)\gtrsim p^{2}\log(1/p)$ as $p\ll 1$. 
	%To complete the proof of the upper bound we refer the reader to \cite[Section~5]{AB}, as the argument is identical, where we use Lemma \ref{meanfield: small variance} in place of \cite[Lemma~5.1]{AB}.
	%\begin{lemma}
	%\label{meanfield: small variance}
	%    Fix $r\geq 1$. Let $\delta, \varepsilon>0$. Define \begin{equation}
		%        \tu{S}_{\delta,\varepsilon} \coloneqq \left\{\boldsymbol{\xi}\in [0,1]^{n\choose 2}:\ \E_{\mu_{\boldsymbol{\xi}}}[N(K_{1,r}, \G_{n})] \geq \left(1+\delta(1+\varepsilon)\right)n^{r+1}p^{r}\right\}.
		%    \end{equation}
	%    Then for $p\in (0,1)$ satisfying $n^{r+1}p^{r}\gg1$, then
	%    \begin{equation*}
		%        \sup_{\boldsymbol{\xi}\in \tu{S}_{\delta, \varepsilon}}\frac{\tu{Var}_{\mu_{\boldsymbol{\xi}}}\left(N(K_{1,r},\G_{n})\right)}{\left(\E_{\mu_{\boldsymbol{\xi}}}\left[N(K_{1,r}, \G_n)\right]\right)^{2}} = o(1).
		%    \end{equation*}
	%\end{lemma}
	%\begin{proof}

	We now turn to the proof of \eqref{eq:var-bd}.  Assume $\boldsymbol{\xi}\in \tu{S}_{\delta, \varepsilon}$. Let $\left(a_{i,j}\right)_{i,j\in \set{n}}$ be the random matrix which is the adjacency matrix of $\G_{n}$. %, where $\G_{n}\stackrel{d}{=}\G(n,\boldsymbol{\xi})$. 
	For $i<j\in \set{n}$, let $\G_{n}^{(i,j)}$ be the random graph on $n$ vertices obtained from $\G_{n}$ by replacing $a_{i,j}$ with $\widehat{a}_{i,j}$, an independent copy of $a_{i,j}$. By \cite[Theorem~3.1]{concineq} we have that 
	\begin{equation}\label{eq:es-ineq}
		\tu{Var}_{\mu_{\boldsymbol{\xi}}}\left(N(K_{1,r}, \G_{n})\right)\leq \frac{1}{2}\sum_{i<j}\E\left[\left(N(K_{1,r}, \G_n)-N(K_{1,r}, \G_{n}^{(i,j)})\right)^{2}\right].
	\end{equation}
	To upper bound the RHS of \eqref{eq:es-ineq} of we need some notation. For $i \ne j \in \set{n}$ and $k \in \N$ we let  $\T_i(k)$ and $\T_{i,j}(k)$ to be set of all pairwise distinct indices $\alpha_1, \alpha_2, \ldots, \alpha_k$ such that none of them are equal to $i$ and equal to either $i$ or $j$, respectively. 
	%    $I\subseteq \set{n}$ and $0\leq k\leq n-|I|$,
	%    \begin{equation*}
		%        \T_{I}(k)\coloneqq \{\{\alpha_1, \alpha_2, \ldots, \alpha_k\}\subseteq \set{n}\setminus I:\ \alpha_{i}\neq\alpha_{j},\ \forall i\neq j\}.
		%    \end{equation*}
	Now note that,
	\begin{equation*}
		N(K_{1,r}, \G_n)-N(K_{1,r}, \G_{n}^{(i,j)}) = (a_{i,j}-\widehat{a}_{i,j})\cdot\sum_{\alpha_{1}, \alpha_2,\ldots, \alpha_{r-1}\in \T_{i,j}(r-1)}\left(\prod_{l=1}^{r-1}a_{i,\alpha_{l}}+\prod_{l=1}^{r-1}a_{j,\alpha_{l}}\right).
	\end{equation*}
	Therefore, by \eqref{eq:es-ineq} %Using linearity of expectation and independence of entries of adjacency matrix, we deduce
	%\begin{equation*}
	%        \frac{1}{2}\sum_{i<j}\E\left[\left(N(K_{1,r}, \G_n)-N(K_{1,r}, \G_{n}^{(i,j)})\right)^{2}\right] \leq 2\E_{\mu_{\boldsymbol{\xi}}}\left[\sum_{i\neq j}a_{i,j}\left(\sum_{\alpha_{1}, \alpha_2,\ldots, \alpha_{r-1}\in \T_{i,j}(r-1)}\prod_{l=1}^{r-1}a_{i,\alpha_{l}}\right)^{2}\right].
	%    \end{equation*}
%    Now, observe that 
\begin{equation}
	\label{mf: split ineq}
	%\sum_{i\neq j} a_{i,j}\left(\sum_{\alpha_1, \ldots, \alpha_{r-1}\in \T_{i,j}(r-1)}\prod_{l=1}^{r-1}a_{i,\alpha_l}\right)^{2}
	\tu{Var}_{\mu_{\boldsymbol{\xi}}}\left(N(K_{1,r}, \G_{n})\right)   \lesssim \E_{\mu_{{\bm \xi}}}  N(K_{1,r}, \G_{n})+\sum_{k=r}^{2(r-1)}C_{k,r} \E_{\mu_{{\bm \xi}}} \left[\sum_{i=1}^{n}\sum_{\alpha_1, \ldots, \alpha_{k}\in \T_{i}(k)}\prod_{l=1}^{k}a_{i, \alpha_l}\right],
\end{equation}
%for some positive constant $C$ (
where $C_{k,r}$ is the number of ways $k$ distinct elements can be split in two sets of size $r$. %such that their union contains exactly $k$ elements). 
Next, define
\begin{equation*}
	Y_{k}\coloneqq \sum_{i=1}^{n}\sum_{\alpha_1, \ldots, \alpha_{k}\in \T_{i}(k)}\prod_{l=1}^{k}a_{i, \alpha_l}, \quad k\in \{r, \ldots, 2(r-1)\}.
\end{equation*} 
By our assumption $\boldsymbol{\xi}\in \tu{S}_{\delta, \varepsilon}$, we have $\E_{\mu_{\boldsymbol{\xi}}}N(K_{1,r}, \G_n)\gg 1$. By \eqref{mf: split ineq}, it remains to show for each $r\leq k\leq 2(r-1)$, $\E_{\mu_{\boldsymbol{\xi}}}\left[Y_k\right]\ll \E_{\mu_{\boldsymbol{\xi}}}\left(N(K_{1,r}, \G_n)\right)^{2}$. To this end, note that %Again by linearity of expectation and independence of entries of the adjacency matrix, we have
\begin{equation}
	\label{mf: max ineq}
	\E_{\mu_{\boldsymbol{\xi}}}[Y_k]\leq \sum_{i=1}^{n}\E_{\mu_{\boldsymbol{\xi}}}\left[\sum_{\alpha_{1}\ldots, \alpha_{r}\in \T_{i}(r)}\prod_{l=1}^{r}a_{i,\alpha_l}\right]\cdot \tu{D}_{i}^{k-r+1}\leq \widehat{\tu{D}}^{k-r+1}\cdot \E_{\mu_{\boldsymbol{\xi}}}\left[N(K_{1,r}, \G_n)\right],
\end{equation}
where $\tu{D}_{i}\coloneqq \sum_{j=1}^n\xi_{i,j}$ and $\widehat{\tu{D}}\coloneqq \max_{i\in \set{n}} \tu{D}_
i$. Now, if $\widehat{\tu{D}}\lesssim 1$, then we are done. So, assume $\widehat{\tu{D}}\gg 1$. We only need to show $\widehat{\tu{D}}^{r-1}\ll \E_{\mu_{\boldsymbol{\xi}}} \left[N(K_{1,r}, \G_n)\right]$. Let $i_*\in \set{n}$ be such that $\tu{D}_{i_*}=\widehat{\tu{D}}$. Then, observe that 
\begin{equation*}
	\E_{\mu_{\boldsymbol{\xi}}} \left[N(K_{1,r}, \G_n)\right]\geq \E_{\mu_{\boldsymbol{\xi}}}\left[\sum_{\alpha_{1}\ldots, \alpha_{r}\in \T_{i_{*}}(r)}\prod_{l=1}^{r}a_{i_{*},\alpha_l}\right] \geq \left(\tu{D}_{i_{*}}-r\right)^{r} \gg \widehat{\tu{D}}^{r-1},
\end{equation*}
where we have used that $\tu{D}_{i_{*}}\geq r$. This completes the proof.
\qed
%\end{proof}

%\begin{proof}[Proof of Theorem \ref{thm:mean-field}]

%\end{proof}

\section{Proof of Theorem \ref{thm:poi}}

%\begin{proof}[Proof of Theorem \ref{thm:poi}] 
Let $\uH$ be a strictly balanced, connected graph with $e_{\uH}>0$. To make use of machinery from \cite{HMS}, we define $X_{\uH}$ to be the number of unlabelled copies of $\uH$ in $\G(n,p)$. Observe that $X_{\uH} = N(\uH, \G(n,p))/\tu{Aut}(\uH)$. Define $\mu_{\uH} \coloneqq \E[X_{\uH}]$. Let $\varepsilon>0$. Let $\eta = \eta(\varepsilon, \delta)$ be the constant from {\cite[Proposition~8.3]{HMS}} and let $K=K(\varepsilon, \delta, \eta)$ be the constant from {\cite[Lemma~8.5]{HMS}}. We note that $\mu_{\uH}$ satisfies \begin{equation*}
	\max\left\{\frac{1}{\eta}, K\right\}\leq \mu_{\uH} \leq \frac{\sqrt{M}}{K}, 
\end{equation*}
where $M= \text{Number of unlabelled copies of 
}\uH\text{ in }K_{n}$. We define a cluster of size $s$ to be a collection of $s$ many distinct copies of $\uH$ in $\G(n,p)$ such that each copy of $\uH$ in the cluster shares atleast one edge with some other copy of $\uH$ in the cluster. Now, Theorem \ref{thm:poi} will follow from {\cite[Proposition~8.3]{HMS}} and {\cite[Lemma~8.5]{HMS}} once we show that for every $s$ satisfying $2\leq s\leq (\delta+\varepsilon)\mu,
$\begin{equation*}
	\E[D_s(X_{\uH})]\leq \exp(-Ks),
\end{equation*}
where $D_{s}(X_{\uH})$ is the number of clusters of size $s$ in $\G(n,p)$. Let $D_{s,k,m}$ be the number of clusters of size $s$ whose union (as a subgraph of $\G(n,p)$) has exactly $k$ vertices and $m$ edges. Similar to as in {\cite[Claim~8.7]{HMS}}, there exists a positive constant $\gamma$ such that for every $s\geq 2$, $k\geq v_{\uH}$, and $m\geq e_{\uH}+1$, we have
\begin{equation}
	\label{pr: ineq 1}
	\E[D_{s,k,m}]\leq n^{-2\gamma m}{k^{2} \choose m}{(2m)^{\alpha^{*}_{H}}\choose s}.
\end{equation}
To prove \eqref{pr: ineq 1}, we follow the same inductive reasoning as \cite{HMS}, with two modifications. First, we use the fact that for any proper subgraph $\tu{J}$ of $\uH$, with $e_{\tu{J}}>0$ satisfies,
\begin{equation*}
	e_{\uH}-e_{\tu{J}}\geq \lambda\left(1+\frac{1/\lambda^{*} - 1/\lambda}{v_{\uH}}\right)\cdot (v_{\uH}-v_{\tu{J}}),
\end{equation*}
where $\lambda = e_{\uH}/v_{\uH}$ and $\lambda^{*} = \max_{\phi\neq \tu{I}\subsetneq H} e_{\tu{I}}/{v_{\tu{I}}}$. The above holds because $\uH$ is strictly balanced. Secondly, we use {\cite[Theorem~5.7]{HMS}} to show that the number of unlabelled copies $\uH$ in a graph with $k$ vertices and $m$ edges is atmost $(2m)^{\alpha^{*}_{\uH}}$. The remainder of the proof is straightforward and follows the same line of argument as \cite{HMS}. So, we omit the details.
%\end{proof}
\qed

\bibliographystyle{plain}
\bibliography{mybib.bib}{}
% \printbibliography}

\appendix
\section{Proof of Theorem \ref{thm: cond struc regular graph}}\label{sec:proof-regular}
We begin with a couple of combinatorial results.
\begin{lemma}
\label{cond_struc:bipartite lemma regular}
Assume $p\ll1$. Fix $\varepsilon>0$. Let $\tu{H}$ be a $\Delta$-regular connected graph and $\tu{G}\subseteq K_{n}$ be such that $e(\tu{G})\lesssim n^{2}p^{\Delta}$. There exists a constant $\eta\coloneqq \eta(\varepsilon, H)>0$, such that with $U= U(\eta)\coloneqq\{v\in V(\tu{G}):\tu{deg}_{\tu{G}}(v)\geq \eta n\}$ and $V=V(\eta)\coloneqq V(\tu{G})\setminus U$, we have
\begin{equation}
	\label{Q_H structural}
	N_{U}(\tu{J},\tu{G}[U,V])\geq N(\tu{J}, \tu{G})-\varepsilon n^{v_{\tu{J}}}p^{e_{\tu{J}}}, \quad \forall \tu{J}\in Q_{\uH}\setminus\{ {\uH}\}
\end{equation}
and, setting $\pi_{\tu{H}} \coloneqq \mathds{1}\{\tu{H is bipartite}\}$,
\begin{equation}
	\label{H structural}
	N(\uH, \tu{G}[V]) + \bigg(N_{U}(\tu{H}, \tu{G}[U,V];A)+N_{U}(\tu{H}, \tu{G}[U,V]; A^{c})\bigg)\pi_{\tu{H}}\geq N(\uH, \tu{G})-\varepsilon n^{v_{\uH}}p^{e_{\uH}},
\end{equation}
where for a bipartite $\uH$ is with bipartition $V(\uH)=(A, A^c)$ and $B \in \{A, A^c\}$ we let $N_{U}(H, \tu{G}[U,V]; B)$ %(resp. $N_{U}(H, \tu{G}[U,V], A^c)$) 
to be the number of labelled copies of $H$ in $\tu{G[U,V]}$ such that $B$ %(resp. $A^{c}$) 
is mapped to $U$.
\end{lemma}
We also require the following stability result. %showing that the existence of an almost-clique structure in $\tu{G}$.

\begin{lemma}
\label{stability:regular}
Suppose $\tu{G}$ is a graph which satisifes 
\begin{equation*}
	N(\uH, \tu{G})\geq (1-\varepsilon)\cdot (2e(\tu{G}))^{v_{\uH}/2}, 
\end{equation*}  
for some $\varepsilon\geq e(\tu{G})^{-1/2}$, then $\tu{G}$ has a subgraph $\widehat{\tu{G}}$ with minimum degree at least $(1-4\varepsilon^{1/2})\cdot (2e(\tu{G}))^{1/2}$. 
\end{lemma}
Lemma \ref{stability:regular} extends \cite[Theorem 5.11]{HMS} which derived the result for $\tu{H}=K_r$, the clique on $r$ vertices.
Taking Lemmas \ref{cond_struc:bipartite lemma regular} and \ref{stability:regular} as given, we proceed to prove Theorem \ref{thm: cond struc regular graph}(a).

\begin{proof}[Proof of Theorem \ref{thm: cond struc regular graph}(a)]
For any connected, $\Delta$-regular graph $\uH$ and $p\in (0,1)$ satisfying $n^{-1/\Delta}\ll p\ll 1$,  
\eqref{eq:entropic stability HMS} was establshed in \cite[Section~7]{HMS}. Further in this regime, for any fixed  $\delta>0$, \cite{BGLZ} shows that 
\begin{equation}
	\label{limit variational problem:regular}
	\lim_{n\rightarrow\infty} \frac{\Phi_{\uH}(\delta)}{n^{2}p^{\Delta}\log(1/p)} = \min\{\theta_{\uH}(\delta), \delta^{2/v_{\uH}/2}\} \eqqcolon \widetilde{\theta}_{\tu{H}}(\delta).
\end{equation}
Therefore  by \eqref{cond_struc:step 1} %,\eqref{limit variational problem:regular} 
and the continuity of the map $\delta\mapsto \widetilde{\theta}_{\tu{H}}(\delta)$ , we have in the regime $n^{-1/\Delta}\ll p\ll 1$
\begin{equation*}
	\P\left(\big\{\exists \tu{G}\subseteq K_{n}:\ \tu{G}\in \mathcal{I}^{*}\tu{ and }e(\tu{G})\leq (1+\mathfrak{s}_{0}(\varepsilon))\widetilde{\theta}_{\tu{H}}(\delta)n^{2}p^{\Delta}\bigg| \UTH\right)\geq 1-(\P(\UTH))^{\varepsilon/16},
\end{equation*}
for some non-negative function $\mathfrak{s}_{0}(\cdot) = \mathfrak{s}_{0}(\cdot, \uH, \delta)$ such that $\lim_{\varepsilon\downarrow 0}\mathfrak{s}_{0}(\varepsilon) = 0$.  
%To simplify notation, let $\widetilde{P}(\delta) = \min\{\theta_{H}, \delta^{2/v_{\uH}}/2\}$. 

For any graph $\tu{G}\in \mathcal{I}^{*}$ satisfying $e(\tu{G})\leq (1+\mathfrak{s}_{0}(\varepsilon))\widetilde{\theta}_{\tu{H}}(\delta) n^{2}p^{\Delta}$, we will prove that $\tu{G}$ must contain either an almost-clique or an almost-complete bipartite subgraph of appropriate size. This will complete the proof. %of Theorem \ref{thm: cond struc regular graph}. 
%We achieve this by adapting Lemma \ref{cond_struc: bipartite lemma} to the $\Delta$-regular case.

To this end, we begin by noting that \eqref{eq:J-sum} continues to hold when $\uH$ is regular. Further, by \cite[Lemma 5.2]{HMS} the bound \eqref{s1:2nd eq} continues to hold in this case. Therefore, by Lemma \ref{cond_struc:bipartite lemma regular} for any $\tu{G}\in \mathcal{I}^{*}$ and $e(\tu{G})\leq (1+\mathfrak{s}_{0}(\varepsilon)) \widetilde{\theta}_{\uH}(\delta)n^2 p^\Delta$ we have %we have by \eqref{s1:2nd eq} and Lemma \ref{cond_struc:bipartite lemma regular},
\begin{equation*}
	\sum_{\tu{J}\in Q_{\uH}\setminus\{\uH\}}\frac{N_{U}(\tu{J}, \tu{G}[U,V])}{n^{v_{\tu{J}}}p^{e_{\tu{J}}}}+\frac{N_{U}(\uH,\tu{G}[U,V], A)\cdot \pi_{\uH}}{n^{v_{\uH}}p^{e_{\uH}}} +\frac{N_{U}(\uH,\tu{G}[U,V], A^{c})\cdot \pi_{\uH}}{n^{v_{\uH}}p^{e_{\uH}}} + \frac{N(\uH, \tu{G}[V])}{n^{v_{\uH}}p^{e_{\uH}}}\geq \delta -4\varepsilon.
\end{equation*} 
%where $\pi_{\uH}$ is the indicator that $\uH$ is bipartite.
Consequently there exists an $x\in [0,1]$ such that
\begin{equation}
	\label{H in G[V]}
	N(\uH,\tu{G}[V])\geq (1-x)\cdot(\delta-4\varepsilon)\cdot n^{v_{\uH}}p^{e_{\uH}}
\end{equation}
and 
\begin{equation}
	\label{J in G[U,V]}
	\sum_{\tu{J}\in Q_{\uH}\setminus\{\uH\}}\frac{N_{U}(\tu{J}, \tu{G}[U,V])}{n^{v_{\tu{J}}}p^{e_{\tu{J}}}}+\frac{N_{U}(\uH,\tu{G}[U,V], A)\cdot \pi_{\uH}}{n^{v_{\uH}}p^{e_{\uH}}} +\frac{N_{U}(\uH,\tu{G}[U,V], A^{c})\cdot \pi_{\uH}}{n^{v_{\uH}}p^{e_{\uH}}} \geq x\cdot(\delta-4\varepsilon).
\end{equation}
Further using arguments similar to the ones employed in \eqref{cond_struc:step 4} we derive from \eqref{J in G[U,V]} that
\begin{equation}
	\label{J in G[U,V]-new}
	P_{\uH}\left(\frac{e(G[U,V])}{n^{2}p^{\Delta}} \right) \geq 1+ x\cdot(\delta-4\varepsilon).
\end{equation}
Next by \cite[Theorem~5.4]{HMS} and the fact that $P_{\uH}$ is strictly increasing and continuous, we deduce from above
\begin{equation}
	\label{edge lower bound}
	\begin{cases}
		e(\tu{G}[V])\geq \left((1-x)^{2/v_{\uH}}\delta^{2/v_{\uH}}/2-\mathfrak{s}_{1}(\varepsilon)\right)\cdot n^{2}p^{\Delta},\\
		e(\tu{G}[U,V])\geq \left(\theta_{\uH}(\delta x)-\mathfrak{s}_{1}(\varepsilon)\right) \cdot n^{2}p^{\Delta},
	\end{cases}
\end{equation}
where %$\widehat{P}(\delta x)$ is the unique positive solution to $P_{\uH}(\theta) = 1+\delta x$ and 
$\mathfrak{s}_{1}(\cdot) \coloneqq \mathfrak{s}_{1}(\cdot,\uH, \delta)$ is some non-negative function such that $\lim_{\varepsilon\downarrow0} \mathfrak{s}_{1}(\varepsilon) = 0$.
Since the map $x\mapsto \theta_{\uH}(\delta x)+\left((1-x)\cdot \delta\right)^{2/v_{\uH}}/2$ is continuous and strictly concave in $[0,1]$, as $e(\tu{G}) \leq (1+\mathfrak{s}_0(\varepsilon))\widetilde \theta_{\uH}(\delta)n^2p^\Delta$, we may choose  $\varepsilon$ sufficiently small so that  \eqref{edge lower bound} holds for $x \in [0, \mathfrak{s}_{2}(\varepsilon)] \cup [1-\mathfrak{s}_{2}(\varepsilon),1]$ for some non-negative $\mathfrak{s}_2(\cdot)$ with $\lim_{\varepsilon \downarrow 0} \mathfrak{s}_2(\varepsilon)=0$. Hence, enlarging $\mathfrak{s}_1(\cdot)$ we may and will assume that  \eqref{edge lower bound}  holds for some $x=x^{*}\in \{0,1\}$. Using this lower bound, observing that $e(\tu{G}[V])+e(\tu{G}[U,V])\leq e(\tu{G})$, and the upper bound on $e(\tu{G})$ we obtain the following tight upper bounds:
\begin{equation}
	\begin{cases}
		e(\tu{G}[V])\leq \left((1-x^{*})^{2/v_{\uH}}\delta^{2/v_{\uH}}/2+\mathfrak{s}_{0}(\varepsilon)+\mathfrak{s}_{1}(\varepsilon)\right)\cdot n^{2}p^{\Delta},\\
		e(\tu{G}[U,V])\leq \left(\theta_{\uH}(\delta x^{*})+\mathfrak{s}_{0}(\varepsilon)+\mathfrak{s}_{1}(\varepsilon)\right) \cdot n^{2}p^{\Delta}.
	\end{cases}
\end{equation} 
If $x^{*}=1$, we proceed as in proof of Theoerm \ref{cond_struc:loc-1} and conclude that $W\subseteq V(\tu{G})$ such that
\begin{equation*}
	\min_{w\in W}\tu{deg}_{\tu{G}}(w)\geq (1-\widetilde{\mathfrak{t}}(\varepsilon))n\quad \text{and}\quad e(\tu{G}[W, V(\tu{G})\setminus W])\geq (\theta_{\uH}-\mathfrak{t}(\varepsilon))n^{2}p^{\Delta},
\end{equation*}
for some non-negative $\mathfrak{t}(\varepsilon)$ and $\widetilde{\mathfrak{t}}(\varepsilon)$ such that $\lim_{\varepsilon\downarrow 0}\max\{\mathfrak{t}(\varepsilon), \widetilde{\mathfrak{t}}(\varepsilon)\}=0$. On the other hand if $x^{*}=0$, Lemma \ref{stability:regular} yields a subset $\widehat{V}$ of $V$ such that 
\begin{equation*}
	|\widehat{V}|\geq (1-\mathfrak{g}(\varepsilon))\delta^{1/v_{\uH}} \cdot np^{\Delta/2}
	\quad \text{and}\quad \min_{v\in \widehat{V}}\tu{deg}_{\tu{G}[\widehat{V}]}(v)\geq (1-\widetilde{\mathfrak{g}}(\varepsilon))|\widehat{V}|,
\end{equation*}
for some non-negative $\mathfrak{g}(\varepsilon)$ and $\widetilde{\mathfrak{g}}(\varepsilon)$ such that $\lim_{\varepsilon\downarrow 0}\max\{\mathfrak{g}(\varepsilon), \widetilde{\mathfrak{g}}(\varepsilon)\}=0$. This yields \eqref{eq:clique-hub}. 

To obtain \eqref{eq:clique-or-hub} we simply note that there exists a unique $\delta_0(\uH) >0$ such that $\theta_{\uH}(\delta) < \delta^{2/v_{\uH}}/2$ if $\delta < \delta_0(\uH)$ and $\theta_{\uH}(\delta) > \delta^{2/v_{\uH}}/2$ if $\delta > \delta_0(\uH)$ (cf.~\cite[Eqn.~(1.4)]{BGLZ}). Hence, $x^*=1$ if $\delta < \delta_0(\uH)$ and $x^*=0$ if $\delta > \delta_0(\uH)$. This yields \eqref{eq:clique-or-hub}. 
\end{proof}

We now address the proofs of Lemmas \ref{cond_struc:bipartite lemma regular} and \ref{stability:regular}. %separately.

\begin{proof}[Proof of Lemma \ref{cond_struc:bipartite lemma regular}]
Let $G\subseteq K_{n}$ satisfy $e(\tu{G})\leq Cn^{2}p^{\Delta}$ for some constant $C<\infty$. 
Let $\widetilde \eta<1$ be as in Lemma \ref{cond_struc: bipartite lemma}. By Remark \ref{rmk:J-copy-bound}  the bound \eqref{Q_H structural} holds for any $\tu{J}\in Q_{\uH}\setminus\{\uH\}$ and any $\eta \in (0, \widetilde \eta]$. %observe that the argument establishing \eqref{Q_H structural} follows identically to the proof of Lemma \ref{cond_struc: bipartite lemma}. 

Next suppose $\uH$ is non-bipartite. Fix $a\in V(\uH)$ and let $\uH_{a}$ denote the subgraph obtained by removing $a$  and all edges incident to it. By \cite[Lemma~5.2]{HMS} we have $\alpha_{\uH_{a}}^{*}\leq v_{\uH}/2$. We claim $\alpha_{\uH_{a}}^{*}< v_{\uH}/2$. If not, then  $\uH_{a}\in Q_{\uH}$ (again by \cite[Lemma 5.2]{HMS}). In particular, $\uH_a$ is bipartite. However, since all neighbors of $a$ in $\uH$ has degree  strictly less than $\Delta$ in $\uH_a$, reintroducing $a$ and its adjacent edges would force $\uH$ to be bipartite - a contradiction.  Using this strict inequality, we bound the number of labelled copies of $\uH$ where at least one vertex of $\uH$ is mapped to $U = U(\widetilde \eta)$,
\begin{equation*}
	N(\uH, \tu{G})-N(\uH, \tu{G}[V])\leq \sum_{a\in V(\uH)} |U|\cdot N(\uH_{a}, \tu{G})\leq \sum_{a\in V(\uH)}|U|\cdot (2e(\tu{G}))^{v_{\uH_{a}}-\alpha_{\uH_{a}}^{*}}\cdot n^{2\alpha_{\uH_{a}}^{*}-v_{\uH_{a}}}\leq \varepsilon n^{v_{\uH}}p^{e_{\uH}},
\end{equation*}
where in the last step we used $p\ll1$. 

We now turn to prove \eqref{H structural} when $\uH$ is bipartite. Let $\widetilde\Phi$ be the number of labelled copies of $\uH$ in $\tu{G}$ where a pair of adjacent vertices of $\uH$ is mapped to $U = U(\widetilde \eta)$ and $\widetilde \Psi$ be the number of labelled copies of $\uH$ where two neighbors of a vertex in $\uH$ is mapped to $U$ and $V=V(\widetilde \eta)$, respectively. Arguing similarly as in \eqref{bound cond Phi} we find that $\widetilde \Phi\leq (\varepsilon/2) n^{v_{\uH}}p^{e_{\uH}}$. It remains to prove the same for $\widetilde\Psi$. 

To this end, fix $a\in V(\uH)$ and let $b$ and $c$ be two distinct neighbors of $a$. Let $\widetilde\Psi_{a,b,c}$ be the number of labelled copies of $\uH$ in $\tu{G}$ such that $b$ is mapped to $U$ and $c$ is mapped to $V$.  Consider a $2$-matching $M$ formed by the union of two disjoint perfect matchings which contain the edges $(a,b)$ and $(a,c)$ respectively. $M$ consists only of disjoint even cycles, say $C_l, C_{l_1}, \cdots, C_{l_{k}}$, with $C_{l}$ containing $\{a,b,c\}$ for some $l =l(\{a,b,c\})$ determined by the triplet $\{a,b,c\}$. For $W, \widetilde{W}\subset V(\tu{G})$  we define $\widetilde\psi_{a,b,c}(W,\widetilde{W})$ to be the number of labelled copies of $C_{l}$ in $\tu{G}$ such that $b$ is mapped to $W$ and $c$ is mapped to $\widetilde{W}$. Thus,
\begin{equation}\label{eq:pre-choice of eta}
	\widetilde \Psi_{a,b,c} \leq \widetilde\psi_{a,b,c}(U,U^{c}) \cdot   \prod_{i=1}^{k}N(C_{l_{i}}, \tu{G})\leq \widetilde\psi_{a,b,c}(U,U^{c})  \cdot (2C n^{2}p^{\Delta})^{(v_{\uH}-l_{1})/2}.
\end{equation}
We will show that there exists a choice of 
$\eta<\widetilde{\eta}$ such that for any $a \in V(\uH)$ and any choice of neighbors $b$ and $c$ of $a$, with $l=l(\{a,b,c\})$ as above,
\begin{equation}
	\label{choice of eta}
	\widetilde \psi_{a,b,c}(U({\eta}),U({\eta})^{c})\leq \varepsilon^2 n^{l_{}}p^{(\Delta l_{})/2}.
\end{equation} 
This together with \eqref{eq:pre-choice of eta} will yield the desired bound for $\widetilde \Psi$.
%For notational simplicity, let $l_{1}=l$. 
Define $s^{*}\coloneqq \lceil 2\Delta^2 v_{\uH} (2C)^{v_{\uH}/2}/\varepsilon^2\rceil $. For $i\in \set{s^{*}} \cup \{0\}$
%consider the following nested sequence of sets of vertices
%\begin{equation*}
%	U_{s^{*}}\supset U_{s^{*}-1}\supset \cdots \supset U_1\supset U_{0}=U,
%\end{equation*}
define $U_{i} \coloneqq \{v\in V(\tu{G}):\tu{deg}_{\tu{G}}(v)\geq \widetilde\eta^{2(i+1)}\}$.  %Set $U_0\coloneqq U$. 

Let $\mathcal{S}_{\{a,b,c\}} $ be set of indices $i \in \set{s^*-1}\cup\{0\}$ such that  $\widetilde \psi_{a,b,c}(U_{i}, U_{i+1}\setminus U_{i}) > (\varepsilon^2/2)n^{l}p^{\Delta l/2}$. We claim that 
\begin{equation}\label{eq:cS-bd}
	|\mathcal{S}_{\{a,b,c\}}| <s^*/\Delta^2 v_{\uH}. 
\end{equation} 
Otherwise, as $\{U_{i+1}\setminus U_i\}_{i \in \set{s^*-1} \cup \{0\}}$ are disjoint,
%for some $i^*\in \{0,\ldots, s^{*}-1\}$ as otherwise it would contradict
\begin{equation*}
	(2C)^{v_{\uH}} n^l p^{\Delta l/2} < \sum_{i \in \mathcal{S}_{\{a,b,c\}}} \widetilde \psi_{a,b,c}(U_{i}, U_{i+1}\setminus U_{i})\leq N(C_{l}, \tu{G})\leq (2e(\tu{G}))^{l/2}\leq (2C)^{l/2}n^{l}p^{\Delta l/2},
\end{equation*}
a contradiction. On the other hand, noting the fact that $C_{l}$ can be covered by a  disjoint union of $P_4$ containing $\{a,b,c\}$ with $b$ as a leaf and a matching of size $(l-4)/2$, we have the bound 
\begin{equation*}
	\widetilde \psi_{a,b,c}(U_{i}, U_{i+1}^{c})\leq |U_{i}|\cdot 2e(\tu{G})\cdot \widetilde \eta^{2(i+2)}n \cdot (2e(\tu{G}))^{(l-4)/2} \leq \frac{\varepsilon^2}{2} \cdot n^{l}p^{\Delta l/2}.  
\end{equation*}  
To obtain the first inequality above we have used that the number of choices for the leaf of $P_4$  that is to mapped to some vertex in $U_i$ is at most $|U_i|$ and the number of copies of $K_{1,2}$ with central vertex belonging to $U_{i+1}^c$ is at most $2 e(\tu{G}) \cdot \widetilde \eta^{2(i+2)} n$. The rightmost inequality is a consequence of the definition of $\widetilde \eta$ (see \eqref{eta:defn}). 

Thus we have shown that for any triplet $\{a, b, c\}$ and any $i \notin \mathcal{S}_{\{a,b,c\}}$ the inequality \eqref{choice of eta} holds with $U(\eta)$ replaced by $U_i$. Finally, by \eqref{eq:cS-bd} we deduce that $\set{s^*-1} \cup \{0\}\setminus \cup_{\{a,b,c\}} \mathcal{S}_{\{a,b,c\}} \neq \phi$. Hence, there indeed exists some $\eta \in (0, \widetilde{\eta}]$ such that \eqref{choice of eta} holds for all triplets $\{a,b,c\}$. This completes the proof.
%
%where we use the fact that $C_{l}$ can be covered by a  disjoint union of $P_4$ containing $\{a,b,c\}$ and a mathcing of size $(l-4)/2$. With this, we conclude that \eqref{choice of eta} holds for $\widetilde{\eta}=\eta^{2i^{*}}$. Note that this choice of  $\widetilde{\eta}$ depends on $l$. To circumvent this, we start with the sequence $U_{(s^{*})^{v_{\uH}}}\supset \cdots \supset U_{0}$. For each $4 \leq l\leq v_{\uH}$, we apply our previous argument iteratively to extract a subsequence such that: at least $1/s^{*}$ fraction of the terms are preserved and \eqref{choice of eta} holds for all $U_{i}$'s in the subsequence. At the end of this process, we obtain $\widehat{\eta}$ for which \eqref{choice of eta} holds uniformly for all choice of $l$. This completes the proof of the required estimate. 
\end{proof}

\begin{proof}[Proof of Lemma \ref{stability:regular}]
It follows from \cite[Lemma~5.1]{HMS} that the vertices of any $\Delta$-regular connected graph $\uH$ can be covered by a collection $\mathcal{C}$ of vertex disjoint edges and cycles. If  $\mathcal{C}$ contains no odd cycles,  we may replace each cycle in $\mathcal{C}$ with one of its perfect mathcing, thereby transforming $\mathcal{C}$ into a perfect mathcing of $\uH$. Now, observe that any additional edge of $\uH$ not in $\mathcal{C}$ is adjacent to exactly two edges of $\mathcal{C}$. Consequently, $\uH$ can be covered by a subgraph that is a disjoint union of $P_{4}$ %(a path on four vertices) 
and a matching of size $(v_{\uH}-4)/2$. Thus,
\begin{equation}
	(1-\varepsilon)\cdot (2e(\tu{G}))^{v_{\uH}/2}\leq N(\uH,\tu{G})\leq N(P_{4},\tu{G})\cdot (2e(\tu{G}))^{(v_{\uH}-4)/2}.
\end{equation} 
The desired result now follows directly from  \cite[Claim~5.12]{HMS}. 

Next assume $\mathcal{C}$ contains an odd cycle $C_{l}$ for some $l\geq 3$. We then have the following inequality
\begin{equation*}
	(1-\varepsilon)\cdot  (2e(\tu{G}))^{v_{\uH}/2}\leq N(\uH, \tu{G})\leq N(C_{l}, \tu{G})\cdot \prod_{\tu{J}\in \mathcal{C}\setminus\{C_l\}} N(\tu{J}, \tu{G}) \leq N(C_{l}, \tu{G})\cdot (2e(\tu{G}))^{(v_{\uH}-l)/2}.
\end{equation*} 
This reduction shows that it is enough to establish Lemma \ref{stability:regular} for $C_l$. Since the case $l=3$ is already covered in \cite[Claim 5.13]{HMS} we may and will assume that $l \geq 5$. Next, for each edge $e\in E(\tu{G})$, let $c_{e}$ be the number of unlabelled copies of $C_{l}$ in $\tu{G}$ that contain the edge $e$. The proof of \cite[Lemma~5.5]{HMS} establishes the following key inequality:
\begin{equation*}
	N(C_{l}, \tu{G})^{2}\leq 2e(\tu{G})\cdot \sum_{e\in E(\tu{G})}2c_{e}^{2}\leq 2e(\tu{G})\cdot \Lambda(C_{l}^{*}),
\end{equation*}
where $C_{l}^{*}$ is the graph obtained by gluing two copies of $C_{l}$ along an edge and $\Lambda(C_{l}^{*})$ is the number of homomorphisms of $C_{l}^{*}$ in $\tu{G}$ where one copy of $C_{l}$ in $C_{l}^{*}$ is mapped to distinct vertices. Now, $\Lambda(C_{l}^{*})$ is upper bounded by $N(P_{4},\tu{G})\cdot (2e(\tu{G}))^{l-3}$ as $C_{l}^{*}\setminus P_{4}$ is covered by a matching of size $l-3$, and as $\l \ge 5$ we have $P_4 \subseteq C^{*}_{l}$. The result now follows from \cite[Claim~5.12]{HMS}.
\end{proof}

It remains to prove Theorem \ref{thm: cond struc regular graph}(b). We split the proof into two parts.
\begin{proof}[Proof of Theorem \ref{thm: cond struc regular graph}(b) for $p \gtrsim n^{-1/\Delta} (\log n)^{-v_{\uH}}$] In this regime from \cite{BGLZ} it follows that 
\[
\lim_{n \to \infty} \frac{\Phi_{\uH}(\delta)}{n^2 p^\Delta \log(1/p)}=\frac12 \delta^{2/v_{\uH}}.
\]
Therefore, as $p \ll n^{-1/\Delta}$, by \cite[Lemma 5.2 and Theorem 5.4]{HMS} for any $\tu{G} \in \mathcal{J}^*$ and any $\phi \neq \tu{J} \subsetneq \uH$ without any isolated vertices we find that $N(\tu{J}, \tu{G}) = o(n^{v_{\tu{J}}} p^{e_{\tu{J}}})$. Hence, by \eqref{eq:J-sum}, for any $\tu{G} \in \mathcal{J}^*$ we have $N(\uH, \tu{G}) \geq (\delta -3 \varepsilon) n^{v_{\uH}} p^{e_{\uH}}$, for all large $n$. Since the bound \eqref{eq:entropic stability HMS} follows from \cite[Proposition 7.1]{HMS} the proof now follows upon applying Theorem \ref{HMS:technical result} and Lemmas \ref{cond_struc:bipartite lemma regular} and \ref{stability:regular}.
\end{proof}

Similar to the proof of Theorem \ref{cond_struc:star loc-2} it requires notions of core and strong-core graphs. We modify Definitions \ref{def:c-graph} and \ref{def:sc-graph} to suit our current need. These are borrowed from \cite{ABRB}.

\begin{definition}
Let $\overline C \coloneqq \overline C(v_{\uH}, \Delta, \delta, \varepsilon) < \infty$ be some sufficiently large constant.		A graph $\tu{G}\subseteq K_n$ is said to be a core graph if the following hold:
\begin{itemize}
	\item[(C1)]%\label{num copies}
	$N(K_{1,r}, \tu{G})\geq \delta(1-3\varepsilon)n^{v_{\uH}}p^{e_{\uH}}$,
	\item[(C2)] %\label{max edges}
	$e(\tu{G})\leq \cbar n^{2}p^\Delta \log(1/p)$, 
\end{itemize}
and 
\begin{itemize}
	\item[(C3)] $\min_{e\in E(\tu{G})}N(K_{1,r}, \tu{G},e)\geq \delta \varepsilon n^{v_{\uH}}p^{e_{\uH}}/ \left(\cbar n^{2}p^\Delta \log(1/p)\right)$.
\end{itemize}
%where for an edge $e\in E(\tu{G})$ the notation $N(K_{1,r}, \tu{G},e)$ denotes the number of labelled copies of $K_{1,r}$ in $\tu{G}$ that contain the edge $e$.
\end{definition}

\begin{definition}
Let $\cstar\coloneqq 32\delta^{2/v_{\uH}}$. %be some constant, which we will fix later. 
A graph $\tu{G}\subseteq K_{n}$ is said to be a \textit{strong-core} graph if the following hold:
\begin{itemize}
	\item[(SC1)] $N(\uH, \tu{G})\geq \delta(1-6\varepsilon) n^{v_{\uH}}p^{e_{\uH}}$,
	\item[(SC2)]$e(\tu{G})\leq \cstar n^{2}p^\Delta$,
\end{itemize}
and
\begin{itemize}
	\item[(SC3)]$\min_{e\in E(\tu{G})} N(\uH, \tu{G}, e)\geq (\delta \varepsilon/\cstar)\cdot (np^\Delta/2)^{v_{\uH}-2}$. 
\end{itemize}
\end{definition}

\begin{proof}[Proof of Theorem \ref{thm: cond struc regular graph}(b) for $p \ll n^{-1/\Delta} (\log n)^{-v_{\uH}}$]
We begin by claiming that
\begin{equation}\label{eq:event-Omega}
	\left\{ \exists \tu{G} \subseteq \G(n,p): \tu{G} \text{ is a core graph} \right\}\setminus \left\{ \exists \tu{G} \subseteq \G(n,p): \tu{G} \text{ is a strong-core graph} \right\} \subseteq \Omega,
\end{equation}
for some event $\Omega$ such that 
\begin{equation}\label{eq:prob-Omega}
	\P(\Omega) \leq (\P(\UTH))^2.
\end{equation}
This claim is immediate from the arguments used in \cite[Sections 3.1--3.3]{ABRB}. In particular, for $p$ such that $n^2p^\Delta \geq (\log n)^{2 v_{\uH}}$ the assertions \eqref{eq:event-Omega}-\eqref{eq:prob-Omega} are direct consequences of \cite[Lemma 3.5]{ABRB}. On the other hand, for $p$ below that threshold the claim \eqref{eq:event-Omega} follows from \cite[Eqn.~(3.22)]{ABRB} and the argument employed in \cite[p.~32]{ABRB}. The claimed probability bound \eqref{eq:prob-Omega} is immediate from \cite[Lemmas 3.7, 3.8, 3.10, and 3.11]{ABRB}. Next, as $\uH$ is non-bipartite it follows from the proof of \cite[Proposition 3.3]{ABRB} (see \cite[Section 4.1]{ABRB}) that 
\begin{equation}\label{eq:sc-many-edge}
	\P\left(\exists \tu{G} \subseteq \G(n,p): \tu{G} \text{ is a strong-core graph with } e(\tu{G}) \geq (1+C \varepsilon)\delta^{2/v_{\uH}}n^2p^\Delta/2 \right) \leq (\P(\UTH))^{1+\varepsilon},
\end{equation}
for some constant $C < \infty$ (not depending on $\varepsilon$). Further, the proof of \cite[Lemma 3.2]{ABRB} (similar to strengthening mentioned in Remark \ref{rmk:HMS-improve}) yields that
\begin{equation}\label{eq:no-core}
	\P\left(\left\{\G(n,p) \text{ contains a core graph}\right\}^c \cap \UTH \right) \leq (\P(\UTH))^2. 
\end{equation}
Therefore, applying \eqref{eq:event-Omega}-\eqref{eq:no-core} we derive
\begin{multline*}
	\P\left(\left\{ \G(n,p) \text{ contains a strong-core graph } \tu{G}  \text{ with } e(\tu{G}) \geq (1+C \varepsilon)\delta^{2/v_{\uH}}n^2p^\Delta/2\right\}\Big| \UTH \right) \\
	\geq 1 - (\P(\UTH))^{\varepsilon/2}.
\end{multline*}
The proof now completes upon applying Lemmas \ref{cond_struc:bipartite lemma regular} and \ref{stability:regular}.
\end{proof}

\end{document}